\newtheorem{thm}{Theorem}
[section]
\newtheorem{cor}[thm]{Corollary}
\newtheorem{lem}[thm]{Lemma}
\newtheorem{prop}[thm]{Proposition}
\theoremstyle{definition}
\newtheorem{fct}[thm]{Fact}
\newtheorem{defn}[thm]{Definition}
\newtheorem{rmk}[thm]{Remark}
\numberwithin{equation}{subsection}
\begin{document}

\title{On algebraic relations between solutions of a generic Painlev\'e equation.}
\author{Ronnie Nagloo$^1$, Anand Pillay$^2$ \\University of Leeds}
\date{\today}
\maketitle 
\pagestyle{plain}
\begin{abstract}
We prove that if $y'' = f(y,y',t, \alpha, \beta,..)$ is a ``generic" Painlev\'e equation (that is, an equation in one 
of the families $P_{I} - P_{VI}$  but with the relevant complex parameters $\alpha, \beta,..$ algebraically independent), and if $y_{1},...,y_{n}$ are  solutions such that $y_{1},y_{1}',y_{2},y_{2}',...,y_{n},y_{n}'$ are algebraically dependent over ${\mathbb C}(t)$, then already for some $1\leq i < j \leq n$, $y_{i},y_{i}',y_{j},y_{j}'$ are algebraically dependent over ${\mathbb C}(t)$. The proof combines results by the Japanese school on  ``irreducibility" of the 
Painlev\'e equations, with the trichotomy theorem for strongly minimal sets in differentially closed fields.
\end{abstract}
\footnotetext[1]{Supported by an EPSRC Project Studentship and a University of Leeds - School of Mathematic partial scholarship}
\footnotetext[2] {Supported by EPSRC grant EP/I002294/1}

\section{Introduction}
The Painlev\'e equations are  ordinary differential equations of order $2$ and come in six families $P_{I} - P_{VI}$, where 
$P_{I}$ consists of the single equation $y'' = 6y^{2} + t$, and $P_{II}-P_{VI}$ come with some complex parameters. They were  
isolated (or discovered) in the early part of the 20th century, by Painlev\'e, with refinements by Gambier and Fuchs, as those ODE's of 
the form $y'' = f(y,y',t)$ (where $f$ is rational over ${\mathbb C}$) which have the {\em Painlev\'e property} and define, via their solutions, ``new special functions", at least for general values of the associated complex parameters. Interest in these differential equations and their solutions has increased considerably since the 1970's.\\

In this paper we are concerned with algebraic relations or dependencies over ${\mathbb C}(t)$ between solutions of a ``general" such equation (namely an equation in one of the classes $P_{I}-P_{VI}$ with complex parameters in general position). We consider a set $y_{1},..,y_{n}$ of solutions as meromorphic functions on some disc $D\subseteq C$ and so working in the differential field $F$ of meromorphic functions on $D$ (which contains the differential subfield ${\mathbb C}(t)$ of rational functions) we can ask about $tr.deg({\mathbb C}(t)(y_{1},y_{1}',..,y_{n},y_{n}')/{\mathbb C}(t))$. We conjecture that this is always $2n$, namely $y_{1},y_{1}',..,y_{n},y_{n}'$ are {\em algebraically independent} over ${\mathbb C}(t)$. It has been brought to our notice that claims of this nature were made in past  by Drach, Vessiot and others.  In any case this ``conjecture" has only been definitively proved, quite recently by Nishioka \cite{Nishioka2}, in the case of $P_I$. Our main result, as stated in the abstract, is a {\em weak} version of this algebraic independence conjecture, but valid for 
{\em all} generic Painlev\'e equations. \\

We will discuss now more of the background and methods.
Much effort has gone into describing algebraic (over ${\mathbb C}(t)$) solutions of the Painlev\'e equations. For $P_{I} - P_{VI}$ there is a classification of the parameters for which the equation has algebraic solutions, and for such parameters a description of the algebraic solutions. We will take it as known that for generic equations in each family $P_{I}-P_{VI}$ there are 
{\em no} algebraic solutions, although strictly speaking this is not needed for our main result as stated in the abstract. \\

Considerable work has also gone into clarifying Painlev\'e's notion of {\em irreducibility}, and  a series of papers by Okamoto, Nishioka, Noumi, Umemura, Watanabe, and others have led to a proofs of  ``irreducibility" of $P_{I}-P_{VI}$ outside special values of the complex parameters. This special notion of irreducibility should not be confused with the usual notions of irreducibility of an algebraic variety (or ``differential algebraic variety"). We will discuss 
 irreducibility  in the appendix. But what was actually proved by the Japanese school, was that, except for some 
special values of the parameters, the set of solutions of a Painlev\'e equation, considered as a ``differential algebraic variety" or ``definable set 
in an ambient differentially closed field" is {\em strongly minimal} in the sense of model theory.\\

Maybe the deepest result in the 
model theory of differentially closed fields concerns the classification of strongly minimal sets $X$. This is originally due to Hrushovski and Sokolovic \cite{Hrushovski-Sokolovic} but their paper was never published, so we will mention other routes to or references for this result in section 2.
The trichotomy statement for strongly minimal sets in a differentially closed field says that 
there are 3 types. Type (I), ``nonorthogonality to the constants" is a version of {\em algebraic integrability} (after base change). Type (II) says 
that $X$ is closely related to the solution set $A^{\sharp}$ of a very special kind of ODE on a simple abelian variety $A$ or rather complex abelian 
scheme ${\mathbb A}$ (of which Painlev\'e-Picard is a special case). And Type (III), ``geometric triviality" is a general form of the algebraic dependence statement in the abstract. We are able to use the afore-mentioned results of the Japanese school, together with additional techniques to rule out Type (I) and Type (II), when $X$ is the solution set of a generic Painlev\'e equation, thus giving the desired conclusion. The additional methods include ``uniform definability of the $A\to A^{\sharp}$ functor, which is alluded to in \cite{Hrushovski-Itai}, and which we give a detailed account of in section 2. What is also crucial is that for each of the Painlev\'e families II-VI, the set of complex tuples $(\alpha, \beta,..)$ for which the corresponding equation is {\em not} strongly minimal has ``infinitely many components". \\

The proof of our main result is very straightforward modulo the ingredients mentioned above, and no detailed analysis of the relevant differential equations is required, other than quoting/translating  the results of the Japanese school. Much of this paper consists of an account of the various ingredients so as to be accessible to several audiences.
In section 2 we discuss the differential algebraic/model-theoretic context and the main notions and results relevant to this paper, including the trichotomy theorem. In section 3 we describe the results by the Japanese school on ``irreducibility" of the Painlev\'e equations $P_{I}-P_{VI}$, and in each case prove our main results (Propositions 3.1, 3.6, 3.9, 3.12, 3.15, and 3.18).  As mentioned earlier the appendix discusses the notion of {\em irreducibility} of an ODE.  It is not needed for our main results but is included for cultural reasons.\\

Below we take  the opportunity to list the families $P_{I}-P_{VI}$. But before doing so we should also say something about the {\em Painlev\'e property} $PP$, although this is not needed for our results: For simplicity consider an ODE 
$y^{(n)} = f(y,y',y'',..,y^{n-1},t)$ where $f$ is a rational function over ${\mathbb C}$. Let $S$ be the finite set of singularities of the equation including the point at infinity if necessary. 
Let $X$ be the universal cover of $P^{1}({\mathbb C})\setminus S$. Then $PP$ for the ODE above can be interpreted as saying that any local analytic solution extends to a meromorphic solution on $X$. (This interpretation was pointed out to us by Malgrange.) One question is whether the Painlev\'e property has a differential-algebraic description or characterization. This was verified for order $1$ equations  (\cite{vanderPut}, \cite{Matsuda}) but we do not know  about higher order equations.\\

We now list the families $P_{I}-P_{VI}$.

\begin{equation*}
\begin{split}
P_{I}:\;\;\;\;\;  & y'' =6y^2+t \\
P_{II}(\alpha):\;\;\;\;\; & y'' =2y^3+ty+\alpha \\
P_{III}(\alpha,\beta,\gamma,\delta):\;\;\;\;\; & y'' =\frac{1}{y}(y')^2 -\frac{1}{t}y'+\frac{1}{t}(\alpha y^2+\beta)+\gamma y^3+\frac{\delta}{y} \\
P_{IV}(\alpha,\beta):\;\;\;\;\; & y'' =\frac{1}{2y}(y')^2+\frac{3}{2}y^3+4ty^2+2(t^2-\alpha)y+\frac{\beta}{y} \\
P_{V}(\alpha,\beta,\gamma,\delta):\;\;\;\;\; & y'' =\left(\frac{1}{2y}+\frac{1}{y-1}\right)(y')^2-\frac{1}{t}y'+\frac{(y-1)^2}{t^2}\left(\alpha y+\frac{\beta}{y}\right)+\gamma\frac{y}{t}\\
& +\delta\frac{y(y+1)}{y-1} 
\end{split}
\end{equation*}
\begin{equation*}
\begin{split}
P_{VI}(\alpha,\beta,\gamma,\delta):\;\;\;\;\; & y'' =\frac{1}{2}\left(\frac{1}{y}+\frac{1}{y+1}+\frac{1}{y-t}\right)(y')^2-\left(\frac{1}{t}+\frac{1}{t-1}+\frac{1}{y-t}\right)y'\\
 & +\frac{y(y-1)(y-t)}{t^2(t-1)^2}\left(\alpha+\beta\frac{t}{y^2}+\gamma\frac{t-1}{(y-1)^2}+\delta\frac{t(t-1)}{(y-t)^2}\right)
\end{split}
\end{equation*} \\


Finally in this introduction some acknowledgements: The paper \cite{Hrushovski-Itai} is a big influence, for various reasons discussed subsequently. A preliminary version of that paper also discussed the issue of strong minimality of a generic $P_{II}$ equation, but without definitive conclusions. Thanks are due  to Dave Marker for his helpful comments following a talk by the second author on this material in the Kolchin seminar (New York, October 2011). Thanks also to Frank Nijhoff and Davide Penazzi for numerous discussions related to the material presented here.

\section{Differential algebraic geometry and the model theory of differentially closed fields}

Kolchin's ``Differential Algebraic Geometry" is one particular abstract formalism for studying algebraic differential 
equations. The set-up is analogous to that of Weil's Foundations in algebraic geometry, and for that reason may not be 
too fashionable nowadays. However the point of view coincides with that of the model theory of differentially closed fields: 
Kolchin's ``universal domain" 
$({\mathcal U},+,\cdot,\partial)$ is a saturated model of the first order complete theory 
$DCF_{0}$, and the basic objects of study, ``differential algebraic varieties", locally modelled on subsets of ${\mathcal U}^{n}$ cut out by finitely many {\em differential polynomial equations}, coincide (up to finite Boolean 
combination) with the {\em definable sets} in $\mathcal U$. We refer the general reader to \cite{Marker-book} and \cite{Pillay-lecture notes} for the basics of model theory, to \cite{Pillay-Newton} for a survey of model theory and its applications (where differential fields are central), and to the article \cite{Marker}  for more details on the model theory of differentially closed fields.  For general stability theory and ``geometric" stability theory, which is relevant to subsections 2.1, 2.3, 2.4, 3.7, and the appendix, the reader is referred to \cite{Pillay-book}. \\

We  quickly recall some key notions. Fix the first order language $L$ of differential rings: the language of unitary rings $\{+,-,\cdot, 0,1\}$ together with a symbol $\partial$ for the derivation. The theory $DCF_{0}$, in the language $L$, consists of the axioms for fields of characteristic $0$ together with another infinite collection of axioms, asserting, of a differential field $K$, that any system of differential polynomial equations over $K$ (in finitely many differential indeterminates) with a common solution in some differential field extension of $K$ already has a common solution 
in $K$. The theory $DCF_{0}$ is complete, $\omega$-stable, and has quantifier elimination, and the models of $DCF_{0}$ are called the differentially closed fields (of characteristic $0$). Note that a differentially closed field is algebraically closed (by considering the special case of axioms which concern only polynomial equations). As a matter of notation, for a field $F$, $F^{alg}$ denotes its algebraic closure (in the usual algebraic sense). A remark is that if $(F,\partial)$ is a {\em differential} field (of characteristic $0$) then $\partial$ extends uniquely to a derivation on $F^{alg}$.

In analogy with the Zariski topology from algebraic geometry, if $K$ is a differentially closed field  (i.e. model of $DCF_{0}$) and $X\subseteq K^{n}$ we call $X$ {\em Kolchin closed} if $X$ is the common zero set of finitely many differential polynomials $F(y_{1},..,y_{n})$ with coefficients from $K$. Such a differential polynomial is simply a usual polynomial over $K$ in indeterminates 
\newline 
$y_{1},..,y_{n},\partial(y_{1}),...,\partial(y_{1}), \partial^{2}(y_{1}),..,\partial^{2}(y_{n}),....$. 
\newline
Sometimes we call a Kolchin closed subset of $K^{n}$ an {\em affine differential algebraic variety}. There is a theory of (abstract) differential algebraic varieties, which are modelled locally on affine differential algebraic varieties  (for example \cite{Kolchin},\cite{Buium}) but there is no need for us to elaborate on this here. 
In any case quantifier elimination for $DCF_{0}$ implies that for a differentially closed field $K$, the {\em definable} (even with parameters) subsets of $K^{n}$ coincide with the finite Boolean combinations of Kolchin closed subsets of $K^{n}$ (i.e. finite unions of ``locally" Kolchin closed subsets of $K^{n}$). If a Kolchin closed set $X$ (or more generally definable set) is defined by a system of differential polynomials ($L$-formulas) with parameters from a differential subfield $k$ of $K$ we will say $X$ is {\em defined over $k$} or just {\em over $k$}. \\

We will fix a ``saturated" model $\mathcal U = ({\mathcal U},+,-,\cdot,0,1,\partial)$ of $DCF_{0}$ which for convenience we will assume to be of cardinality continuum. Hence the (algebraically closed) field of constants ${\mathcal C}_{\mathcal U}$ of $\mathcal U$ can be identified with the field of complex numbers ${\mathbb C}$. For $y\in {\mathcal U}$ we may write $y'$ for $\partial(y)$, likewise for a finite tuple $y = (y_{1},..,y_{n})$. We should make it clear that $\mathbb C$ can be seen in two ways, as a subfield of ${\mathcal U}$ (over which varieties, differential algebraic varieties,..  may be defined) and as a differential algebraic variety (or definable set) in its own right, defined by $y' = 0$.

We let $t$ denote an element of $\mathcal U$ such that $t' = 1$. Hence the differential field $({\mathbb C}(t),d/dt)$ is a differential subfield of ${\mathcal U}$. If $D\subset {\mathbb C}$ is an open connected domain then then the set 
${\mathcal F}(D)$ of meromorphic functions on $D$, equipped with $d/dt$ will be a differential field extension of 
$({\mathbb C}(t),d/dt)$. Clearly ${\mathcal F}(D)$ has cardinality $>$ the continuum so can not be embedded in ${\mathcal U}$, but if $F$ is any differential subfield of  ${\mathcal F}(D)$ containing and countably generated over ${\mathbb C}(t)$ it {\em will be} embeddable in ${\mathcal U}$ over ${\mathbb C}(t)$.\\

If $F$ is a differential subfield of ${\mathcal U}$ and $y$ is a tuple from ${\mathcal U}$, then by $F\langle y \rangle$
we mean the differential subfield of $\mathcal U$ generated by $F$ and $y$. It clearly coincides with the field $F(y,y',y'',...)$ generated by $F$ together with $y,y',y'',...$. With this notation, and given a tuple $z$ from ${\mathcal U}$ we will say  ``$z\in acl(F,y)$"  if the coordinates of $z$ are in $F\langle y \rangle^{alg}$. What we have described as ``$acl(F,y)$" is precisely the {\em algebraic closure} of $F,y$ in the sense of model theory in the structure $({\mathcal U},+,\cdot,\partial)$. \\

Given a definable set $Y\subseteq {\mathcal U}^{n}$ (or as a special case, a differential algebraic variety), defined over a countable differential 
subfield $F$, we can consider $sup\{tr.deg(F\langle y \rangle/F):y\in Y\}$ as $y$ ranges over elements of $Y$. If this number is finite, then we call it the 
{\em order} of $Y$ and say that $Y$ is {\em finite-dimensional}, which is equivalent to $Y$ having finite Morley rank.
Assuming $order(Y)$ to be {\em finite} we will call a point $y\in Y$ {\em generic over $F$} if $tr.deg(F\langle y\rangle/F) = order(Y)$. 
\\

Recall:

\begin{defn} A definable set $X$ in ${\mathcal U}$ is {\em strongly minimal} if it is infinite but has no infinite co-infinite definable subsets.
\end{defn}

It is not hard to see that a strongly minimal set must be finite-dimensional. If $X\subseteq {\mathcal U}^{n}$ is a differential algebraic variety then $X$ being strongly minimal is equivalent (via quantifier elimination) to $X$ having no proper infinite differential algebraic subvariety.\\

Definition 2.1 (as a definition of strong minimality) makes sense for a definable set in any saturated structure, and we will freely use this definition later. 

\begin{rmk} Let $X$ be a definable set of finite order $m$ say. Then the following are equivalent:
\newline
(i) $X$ is strongly minimal,
\newline
(ii) $X$ can not be written as the disjoint union of definable sets of order $m$, {\em and} for any differential field $F$ over which $X$ is defined, and point $y\in X$, either $y\in F^{alg}$ or $y$ is generic point of $X$ over $F$ (i.e. 
$tr.deg(F\langle y \rangle/F)$ is $0$ or $m$). 
\end{rmk}

Note the special case:

\begin{rmk} Let $Y\subseteq {\mathcal U}$ be defined by differential equation $y^{(n)} = f(y,y',..,y^{(n-1)},t) = 0$ where $f$ is rational over $\mathbb C$. Then $Y$ is strongly minimal if and only if for any differential subfield $F$ of $\mathcal U$ which is finitely generated over ${\mathbb C}(t)$, and $y\in Y$, either $y\in F^{alg}$  or $tr.deg(F\langle y \rangle)/F) = n$. 
\end{rmk}

We mention a few examples:
\newline
- If $f$ is an absolutely irreducible polynomial over $\mathcal U$ in $2$ variables then the subset $Y$ of ${\mathcal U}$ defined by $f(y,y') = 0$ is strongly minimal, of order $1$.
\newline
- If $Y\subseteq {\mathbb C}^{n}$ is an algebraic variety, then note that $Y$ as a subset of ${\mathcal U}^{n}$ is a differential algebraic variety, defined by the relevant polynomial equations over $\mathbb C$ together with differential equations $y_{1}' = y_{2}' = .. = y_{n}' = 0$. In particular
when $Y\subset {\mathbb C}^{n}$ is an {\em irreducible algebraic curve} then as a {\em differential algebraic variety}, $Y$ is strongly minimal, of order $1$. 
\newline
- The subset of $\mathcal U$ defined by  $\{yy'' = y', y' \neq 0\}$ is strongly minimal, of order $2$. (See 5.17 of \cite{Marker}.)

\begin{rmk} Suppose $X$ is strongly minimal, defined over $F$, and of order $m$. Let $y_{1},..,y_{k}\in X$. Then 
$tr.deg(F\langle y_{1},..,y_{k}\rangle/F) = mi$ for some $i=0,..,k$. 
\end{rmk}

\vspace{2mm}
\noindent
Some other examples, important for this paper, come from simple abelian varieties $A$  over ${\mathcal U}$. 
We will identify $A$ with its set $A({\mathcal U})$ of ${\mathcal U}$-points and so can consider differential algebraic (equivalently definable) subgroups of $A$. 
\begin{fct} ($A$ an abelian variety over ${\mathcal U}$.) 
\newline
(i) $A$ has a (unique) smallest Zariski-dense definable subgroup, which we call $A^{\sharp}$. 
\newline
(ii) $A^{\sharp}$ is finite-dimensional, $dim(A)\leq order(A^{\sharp}) \leq 2dim(A)$ and moreover $dim(A) = order(A^{\sharp})$ if and only if $A$ descends to ${\mathbb C}$ (in which case $A^{\sharp} = A({\mathbb C})$), 
\newline
(iii) If $A$ is a simple abelian variety with ${\mathbb C}$-trace $0$, then $A^{\sharp}$ is strongly minimal. 
\newline
(iv) If $A$ is an elliptic curve then $A^{\sharp}$ is strongly minimal, whether or not $A$ descends to ${\mathcal C}$.
\end{fct}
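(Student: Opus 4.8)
The plan is to construct $A^{\sharp}$ explicitly from the universal vectorial extension together with Kolchin's logarithmic derivative, and then to extract (i)--(iv) from that description, the decisive input being the local modularity of the Manin kernel. For (i), I would pass to the universal vectorial extension
\[
0 \to V \to \hat{A} \xrightarrow{\;\pi\;} A \to 0,
\]
where $V\cong \mathbb{G}_a^{d}$ is a vector group with $d = dim(A)$. Since $\hat{A}$ carries a canonical $D$-group (connection) structure, Kolchin's logarithmic derivative gives a definable homomorphism $\ell\delta\colon \hat{A}(\mathcal U)\to L\hat A(\mathcal U)$ into the (torsion-free) Lie algebra, and I would set $A^{\sharp}:=\pi(\ker\ell\delta)$, equivalently $A^{\sharp}=\ker(\mu)$ for the induced Manin homomorphism $\mu\colon A(\mathcal U)\to \mathbb{G}_a^{d}(\mathcal U)$. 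Because $L\hat A$ is torsion-free and $V$ is torsion-free divisible, every torsion point of $A$ lifts into $\ker\ell\delta$ and hence lies in $A^{\sharp}$; as the torsion of an abelian variety is Zariski dense, $A^{\sharp}$ is Zariski dense. For minimality I would show that $A^{\sharp}$ is contained in \emph{every} Zariski-dense definable subgroup $B$, reducing to $B$ connected and using that modulo $A^{\sharp}$ the quotient embeds into a vector group, on which an abelian variety admits no nonzero homomorphism. I expect this containment to be the delicate point of (i), and in the general (non-simple) case I would lean on the structure theory of finite-dimensional definable groups and the descending chain condition from $\omega$-stability.

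For (ii), finite-dimensionality and the bounds read off from the construction: the sharp points of the $D$-group $\hat A$ form a finite-dimensional definable group of order equal to $dim(\hat A)=2d$, so its image $A^{\sharp}$ has $order(A^{\sharp})\le 2d$; and since $A^{\sharp}$ is Zariski dense in the $d$-dimensional $A$, a generic point has at least $d$ algebraically independent coordinates, giving $order(A^{\sharp})\ge d$. The equality $order(A^{\sharp})=d$ is exactly the assertion that the Manin homomorphism is trivial, i.e.\ that $A$ descends to $\mathbb C$; in that case I would check directly that $A^{\sharp}=A(\mathbb C)$, whose generic point has transcendence degree $d$ over $\mathbb C(t)$ with all coordinates constant. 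The converse (a nonzero Manin map detecting nonconstancy) is the classical direction, which I would quote.

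The heart is (iii). By the rigidity of $A^{\sharp}$ (definable subgroups of $A^{\sharp}$ and of its powers come from algebraic subgroups of $A$), the simplicity of $A$ forces $A^{\sharp}$ to have no proper infinite definable subgroup. The crucial input is the theorem of Hrushovski--Sokolovic that, for $A$ simple with $\mathbb C$-trace $0$, the group $A^{\sharp}$ is locally modular (one-based) and orthogonal to the constants. In a one-based group every definable subset of the group is a finite Boolean combination of cosets of definable subgroups; combined with the absence of proper infinite definable subgroups, this makes every definable subset of $A^{\sharp}$ finite or cofinite, which is precisely strong minimality. The main obstacle of the whole statement lies here: this local modularity is the model-theoretic core and is exactly what ultimately rests on the trichotomy theorem invoked in the abstract. (Note that the trace-$0$ hypothesis is essential, since a constant simple $A$ of dimension $>1$ would give $A^{\sharp}=A(\mathbb C)$ of Morley rank $dim(A)>1$.)

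Finally, for (iv) I would split into cases. An elliptic curve is a simple abelian variety of dimension $1$. If its $\mathcal C$-trace is $0$, equivalently $A$ does not descend to $\mathcal C$, then (iii) applies verbatim. If instead $A$ descends to $\mathcal C$, then by (ii) we have $A^{\sharp}=A(\mathcal C)$, which is an elliptic curve over the constant field; viewed as a differential algebraic variety this is an irreducible algebraic curve and hence strongly minimal of order $1$, as noted in the examples above. Either way $A^{\sharp}$ is strongly minimal, which is exactly the reason (iv) is separated from (iii).
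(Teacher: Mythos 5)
Your proposal is correct and follows essentially the same route as the paper: parts (i) and (ii) are outsourced to Buium's construction via the universal vectorial extension (which you sketch in more detail than the paper, which simply cites \cite{Buium}), part (iii) rests on one-basedness of $A^{\sharp}$ --- obtained from the Hrushovski--Sokolovic/dichotomy machinery --- combined with the absence of proper infinite definable subgroups, exactly as in the paper's discussion following Theorem 2.11, and part (iv) is the same case split using the last part of (ii) together with (iii). The only slip is the aside in (ii) that $order(A^{\sharp}) = dim(A)$ means ``the Manin homomorphism is trivial'': a literally trivial Manin map would give $A^{\sharp} = A$, which is not even finite-dimensional; the correct statement (which you in any case quote from Buium, as does the paper) is that the $\partial$-group structure on $\tilde{A}$ descends, equivalently $A$ descends to ${\mathbb C}$.
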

\begin{proof} (i) and (ii) are due to Alie Buium \cite{Buium} with references. (iii) depends on the dichotomy theorem for strongly minimal sets in $DCF_{0}$ and will be discussed below. (iv) follows from the last part of (ii), together with (iii). 
\end{proof}

\vspace{2mm}
\noindent
\begin{defn} Let $Y\subset {\mathcal U}^{n}$ be strongly minimal, and suppose $order(Y) = m$.
\newline
(i) We call $Y$ {\em geometrically trivial} if for some (any) countable differential field $F$ over which $Y$ is defined, if 
$y_{1},..,y_{\ell}$ are elements of $Y$, generic over $F$, and $tr.deg(F\langle y_{1}, y_{2},..,y_{\ell} \rangle/F) < \ell m$ then for some $i<j$,
\newline
 $tr.deg(F\langle y_{i}, y_{j} \rangle /F) < 2m$ (equivalently = $m$). Another way of expressing this is that for any countable differential field $F$ over which $Y$ is defined, and for any $y_{1},..,y_{\ell}\in Y$, if the collection consisting of $y_{1},..,y_{\ell}$ together with all their derivatives $y_{i}^{(j)}$ is algebraically dependent over $F$ then for some $i<j$, $y_{i}, y_{j}$ together with their derivatives is algebraically dependent over $F$. (So geometric triviality is precisely the statement in the abstract.)
\newline
(ii) Let $Z$ be another strongly minimal set. We will say that $Y$ and $Z$ are nonorthogonal if there is some definable relation $R\subset Y\times X$ such that the images of the projections of $R$ to $Y,Z$ respectively are infinite (hence cofinite in $Y, Z$ respectively), and these projections are ``finite-to 1".  
\end{defn}

\begin{rmk} If $Y,Z$ are nonorthogonal strongly minimal sets then $order(Y) = order(Z)$. Moreover $Z$ is geometrically trivial if and only if $Y$ is.
\end{rmk}

We can now state the trichotomy theorem.
\begin{thm} Let $X$ be a strongly minimal set. Then exactly one of the following holds:
\newline
(i) $X$ is nonorthogonal to the strongly minimal set $\mathbb C$  (defined by $y' = 0$),
\newline
(ii) $X$ is nonorthogonal to $A^{\sharp}$ for some simple abelian variety $A$ over ${\mathcal U}$ which does not descend to ${\mathbb C}$. 
\newline
(iii) $X$ is geometrically trivial. 
\end{thm}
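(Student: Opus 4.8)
The plan is to establish this as the Zilber trichotomy for strongly minimal sets in $DCF_{0}$, following the route of Hrushovski and Sokolovic \cite{Hrushovski-Sokolovic}. The organising principle is the dichotomy between \emph{local modularity} (equivalently, after passing to $X^{eq}$, one-basedness) and its failure, refined by triviality: type (i) will be the non-locally-modular case, type (iii) exactly the geometrically trivial case of Definition 2.6, and type (ii) the locally modular but non-trivial case. Exhaustiveness is then immediate, since every strongly minimal $X$ is either non-locally-modular, or locally modular and trivial, or locally modular and non-trivial.

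The crux, and what I expect to be the main obstacle, is the non-locally-modular case. Here I would equip $X$ with the structure of a \emph{Zariski geometry} in the sense of Hrushovski and Zilber, taking as closed sets the Kolchin-closed relations on the powers of $X$; the dimension and smoothness/ampleness axioms are to be verified using the finite Morley rank (finite-dimensionality) of $X$ and the noetherianity of the Kolchin topology. By the Hrushovski--Zilber theorem, a non-locally-modular Zariski geometry interprets an algebraically closed field $K$. I would then show that every infinite field interpretable in $\mathcal U$ is definably isomorphic to the constant field $\mathbb C$ --- the only pure algebraically closed field living inside $DCF_{0}$ --- so that this interpretation produces a definable, finite-to-one correspondence between cofinite parts of $X$ and of $\mathbb C$, i.e.\ non-orthogonality to $\mathbb C$, which is type (i). An alternative that avoids the full Zariski-geometry machinery would be the Pillay--Ziegler jet-space proof of the canonical base property, from which the same dichotomy follows.

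If instead $X$ is locally modular, I would first dispose of the trivial subcase: if $X$ is geometrically trivial we are in type (iii) by Definition 2.6. So suppose $X$ is locally modular and non-trivial. Then the Zilber group configuration theorem yields a strongly minimal, locally modular group $G$ definable in $\mathcal U$ and non-orthogonal to $X$, and by Remark 2.7 it suffices to place $G$ in type (ii). Invoking the structure theory of finite-dimensional differential algebraic groups (Buium \cite{Buium}), $G$ is definably isogenous to a differential algebraic subgroup of a commutative algebraic group, and one-basedness collapses the linear (vectorial and multiplicative) part, leaving $G$ non-orthogonal to $A^{\sharp}$ for some abelian variety $A$; decomposing $A$ up to isogeny into simple factors reduces to the case $A$ simple. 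Were $A$ to descend to $\mathbb C$, then $A^{\sharp} = A(\mathbb C)$ by Fact 2.5(ii), a field-like, non-locally-modular object, which is incompatible with $G$ being locally modular; hence $A$ does not descend to $\mathbb C$, and by Fact 2.5 its Manin kernel $A^{\sharp}$ is strongly minimal, with $G$ --- and therefore $X$ --- non-orthogonal to it. This is type (ii).

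Finally I would check that the three types are mutually exclusive, which is what upgrades the conclusion to ``exactly one''. The field $\mathbb C$ is not locally modular, whereas every $A^{\sharp}$ with $A$ not descending to $\mathbb C$ is locally modular and non-trivial; since both local modularity and triviality are invariants of non-orthogonality (cf.\ Remark 2.7), no strongly minimal set can fall into two of the three cases simultaneously.
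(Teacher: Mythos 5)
Your proposal follows essentially the same route as the paper's own guide to Theorem 2.8: first the dichotomy theorem (via the Zariski-geometry machinery of Hrushovski--Zilber, with the Pillay--Ziegler jet-space argument noted as an alternative), then in the modular non-trivial case the group configuration theorem producing a modular strongly minimal group, which is embedded in a commutative algebraic group whose linear part is collapsed by one-basedness to land on $A^{\sharp}$ for a simple $A$ not descending to $\mathbb C$, and finally mutual exclusivity via the invariance of modularity and triviality under nonorthogonality. One small imprecision: it is not true that every infinite field interpretable in $\mathcal U$ is definably isomorphic to the constants ($\mathcal U$ itself, qua field, is a counterexample); the statement you need, and the one the paper actually invokes, is that an infinite \emph{finite-dimensional} (finite Morley rank) definable field is definably isomorphic to ${\mathcal C}_{\mathcal U}$, which applies here because the field produced by the Zariski-geometry theorem is in finite correspondence with the finite-dimensional set $X$.
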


\subsection{Guide to the proof of the trichotomy theorem.}

Everything at the model-theoretic level that one needs to know for the proof of our main result is given above, but we will here try to give a guide through the literature for the interested reader. The first key result is the {\em dichotomy theorem} for strongly minimal sets in $DCF_{0}$. It requires the notion of {\em modularity} of a strongly minimal set. First the heuristics: given a strongly minimal set $X$, we can consider the definable set $X\times X$ and strongly minimal subsets of $X\times X$. Modularity of $X$ means that for a {\em generic} point $(a,b)$ of $X\times X$ there is no infinite definable family of strongly minimal subsets of $X\times X$ passing through $(a,b)$. (And in this kind of language, {\em geometric triviality} of $X$ is equivalent to the stronger property of there being no infinite definable family of strongly minimal subsets of $X\times X$  at all.)\\

Now for the formalities. We work with $DCF_{0}$. We use the model-theoretic notion of algebraic closure given above. Let $X$ be a strongly minimal set.
\begin{defn} $X$ is said to be {\em modular} if for some (countable) differential field $F$ over which $X$ is defined, the following holds:
\newline
if $B\subset X$, $c, d\in X$ and $d\in acl(F,B,c)$ then there is $b\in X$ such that $b\in acl(F,B)$ and $d\in acl(F,b,c)$. 
\end{defn}

\begin{rmk} If $X$ is strongly minimal and geometrically trivial then $X$ is modular. Moreover modularity is invariant under nonorthogonality.
\end{rmk}

Consider  algebraically independent points $a,b,c\in {\mathbb C}$, let $d = ac + b$  and note that $d\in acl(a,b,c)$  but there is no $b'\in acl(a,b)$ such that $d\in acl(b',c)$. So  $\mathbb C$ is nonmodular. \\

The dichotomy theorem in the context of $DCF_{0}$ is:

\begin{thm} Suppose $X$ is strongly minimal. Then either $X$ is modular or $X$  is nonorthogonal to the strongly minimal set ${\mathbb C}$ (and not both).
\end{thm}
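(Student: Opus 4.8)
The plan is to prove the substantive implication, that a non-modular strongly minimal $X$ must be nonorthogonal to the constant field $\mathbb{C}$, since the exclusivity (``not both'') is formal: the computation recorded just after Remark 2.12 shows $\mathbb{C}$ is itself non-modular, and Remark 2.12 says modularity is invariant under nonorthogonality; so if $X$ were simultaneously modular and nonorthogonal to $\mathbb{C}$ then $\mathbb{C}$ would inherit modularity, a contradiction. Thus everything reduces to showing: if $X$ fails the condition of Definition 2.10, then $X$ is nonorthogonal to $\mathbb{C}$. First I would unwind non-modularity into geometric data. Using that $DCF_0$ is $\omega$-stable and eliminates imaginaries, the failure of the ``$acl$-exchange'' condition of Definition 2.10 translates, by the standard pseudoplane/family-of-curves machinery of geometric stability theory, into a \emph{faithful} definable family $\{C_s : s\in S\}$ of strongly minimal ``plane curves'' $C_s\subseteq X\times X$ of dimension at least two, in which the canonical parameter $s$ of a generic curve is interalgebraic with the canonical base $\mathrm{Cb}$ of (the locus of) a generic point of that curve.

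The heart of the argument is to show that these canonical bases are controlled by the constants. Here I would bring in the theory of Kolchin prolongation and \emph{jet spaces}. For $X$ realized inside affine space and $a$ a generic point, one forms the $m$-th jet spaces of $X$ and of the family at $(a,b)\in C_s$; the key input, due to Pillay and Ziegler, is that in $DCF_0$ these jet spaces are \emph{$\mathbb{C}$-vector spaces}, and the canonical base $\mathrm{Cb}$ of the generic curve lies in the algebraic closure of finitely many jet coordinates at $(a,b)$, all of which take values in $\mathbb{C}$. In other words, the non-modular data gets \emph{linearized over the constants}: one obtains that $\mathrm{Cb}$, hence $s$, is internal to $\mathbb{C}$. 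This constants-internality of canonical bases is the technical core and, I expect, the main obstacle, since it requires the precise differential-algebraic behaviour of prolongations under $\partial$ and the fact that a finite-dimensional definable set is ``$1$-based over the constants'' in the appropriate sense.

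With the family faithful and its canonical bases internal to $\mathbb{C}$, I would assemble a group configuration over $\mathbb{C}$ and apply Hrushovski's group configuration theorem to produce a definable group $G$ acting definably and generically faithfully on the relevant curves. The richness of this action (the two-dimensionality and faithfulness of the family) forces $G$ to interpret an infinite algebraically closed field $K$. At this point I invoke the purity of the constant field in differentially closed fields (Hrushovski--Sokolovic): every infinite field interpretable in a model of $DCF_0$ is definably isomorphic to the field of constants $\mathbb{C}$. Hence $K\cong\mathbb{C}$, and tracing the configuration back exhibits a finite-to-finite definable correspondence between an infinite (hence cofinite) subset of $X$ and $\mathbb{C}$, which is exactly nonorthogonality of $X$ to $\mathbb{C}$ in the sense of Definition 2.6(ii).

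An alternative route, avoiding the group configuration packaging, is the Zariski-geometry approach of Hrushovski and Zilber: one equips $X$ and its Cartesian powers with the topology generated by Kolchin-closed relations, verifies the Zariski axioms (dimension theory, properness/completeness of projections, the semiminimality and smoothness conditions) for the finite-dimensional set $X$, and then quotes the classification theorem to the effect that a non-locally-modular Zariski geometry interprets an algebraically closed field; this field is again identified with $\mathbb{C}$ via purity, giving the same conclusion. Either way, the decisive and hardest step is the linearization over the constants—the jet-space argument in the first route, and the verification of the Zariski axioms together with the field-interpretation in the second—so that the abstract trichotomy inputs of geometric stability theory can be specialized to the single definable field $\mathbb{C}$ available in $DCF_0$.
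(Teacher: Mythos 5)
Your proposal is correct and follows essentially the same two routes that the paper itself sketches for this theorem: the Pillay--Ziegler jet-space linearization over the constants, and the Hrushovski--Sokolovic/Hrushovski--Zilber Zariski-geometry argument, in both cases finishing by identifying the interpreted field with $\mathbb{C}$ and handling exclusivity via the non-modularity of $\mathbb{C}$ and invariance of modularity under nonorthogonality. The only stylistic difference is that in the jet-space route the paper passes directly from internality of the canonical bases to nonorthogonality of $X$ to $\mathbb{C}$, so your group-configuration/field-interpretation step is an unnecessary (though harmless) detour there.
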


The first proof of Theorem 2.11 was given in \cite{Hrushovski-Sokolovic}. It went through the notion of a {\em Zariski geometry}. We will not give the definition of Zariski geometry here but just mention that it is closely related to the notion of an {\em algebraic $\partial$-variety} discussed below. In any case, one first shows that (after removing finitely many points) $X$ can be assumed to be a Zariski geometry. Nonmodularity of $X$ together with the main theorem of \cite{Hrushovski-Zilber} yields a definable strongly minimal field $k$ with is nonorthogonal to $X$. The theory of ``finite-dimensional" groups definable in ${\mathcal U}$ then gives that $k$ must be definably isomorphic to the field ${\mathcal C}_{\mathcal U}$ of constants. 

A second, and more direct proof of Theorem 2.11, avoiding the dependence on the deep and difficult Zariski geometries theorem from \cite{Hrushovski-Zilber}, was given in \cite{Pillay-Ziegler}. Making use of ``differential jet spaces" of differential algebraic varieties (which generalize Kolchin's differential tangent spaces), it was shown that nonmodularity of $X$ yields ``nonorthogonality" of $X$ to a differential jet space $J(X)_{a}$ at a generic point $a$ of $X$. But $J(X)_{a}$ is the solution space of a linear differential equation  hence is a finite-dimensional vector space over ${\mathbb C}$, whereby $X$ is nonorthogonal to $\mathbb C$.\\

To pass from Theorem 2.11 to the Trichotomy Theorem 2.7 requires an understanding of modular strongly minimal sets in $DCF_{0}$ and uses a combination of general geometric stability theoretic results with results specific to $DCF_{0}$. The arguments are given in \cite{Pillay-DAG}, in the paragraphs leading up to Proposition 4.10 there, and we repeat/summarize them here. Firstly for a modular strongly minimal set $X$ in {\em any} structure, either $X$ is geometrically trivial or $X$ is nonorthogonal to a definable modular strongly minimal group. This is due to Hrushovski 
\cite{Hrushovskilocallymodular} and also is treated in \cite{Pillay-book} (see Theorem 1.1 of Chapter 5). So we may assume $X$ to be a strongly minimal modular group $G$ (which has to be commutative, either by strong minimality or modularity). As discussed in \cite{Pillay-DAG} $G$ definably embeds in a connected commutative algebraic group $A$ without proper connected positive dimensional algebraic subgroups. So $A$ is either the additive group, the multiplicative group, or a simple abelian variety. In the first two cases our understanding of their differential algebraic subgroups yields that $G$ is nonorthogonal to ${\mathcal C}_{\mathcal U}$, contradiction, so $A$ must be a simple abelian variety. Strong minimality of $G$ forces $G$ to be $A^{\sharp}$ from Fact 2.4(i), and by Fact 2.4 (ii) $A$ does not descend to ${\mathbb C}$ (for then $A^{\sharp}= A({\mathbb C})$ and $G$ is nonorthogonal to 
${\mathcal C}_{\mathcal U}$, contradiction again). \\

This essentially completes the  guide to Theorem 2.7. We will now return as promised to part (iii) of Fact 2.5. and also discuss the mutual exclusivity of conditions (i), (ii), (iii) in 2.8. So let $A$ be a simple abelian variety over $\mathcal U$ which does not descend to $\mathbb C$. We want to see that $A^{\sharp}$ is strongly minimal {\em and modular}. This is also explained on p.130 of \cite{Pillay-DAG} and we repeat some things. First, simplicity of $A$ and Fact 2.5 (i) imply that $A^{\sharp}$ is ``minimal" in the sense of having no proper definable infinite subgroup. Let $X$ be a strongly minimal definable subset of $A^{\sharp}$. If $X$ were nonmodular then the dichotomy theorem together with some additional arguments yield that $A$ descends to $\mathbb C$, contradiction. So $X$ is modular. The minimality of $A^{\sharp}$ as mentioned above implies that $A^{\sharp}$ is contained in $acl(X)$ (together with finitely many additional parameters). The modularity of $X$ yields that $A^{\sharp}$ is a so-called ``$1$-based group" which implies in particular that any definable subset of $A^{\sharp}$ is a translate of a subgroup (up to finite Boolean combination). The ``minimality" of $A^{\sharp}$ as defined above implies that $A^{\sharp}$ has NO infinite co-infinite definable subset, so is strongly minimal, and of course modular.\\

 This modularity of $A^{\sharp}$ implies that (i) and (ii) in Theorem 2.8 are mutually exclusive. On the other hand  if $G$ is a strongly minimal group defined over $F$, and $a,b$ are ``mutually generic" elements of $G$, then putting $c = a\cdot b$, the triple $\{a,b,c\}$ is a counterexample to the geometric triviality of $G$. So we see that (ii) and (iii) are also mutually exclusive.\\
 
 This completes our commentary.
 
 \subsection{Algebraic $\partial$-varieties}
 
 It will be useful, especially with regard to the the material in section 3, to recall the notion of an algebraic $\partial$-variety. This appears in \cite{Buium} and \cite{Hrushovski-Itai} for example, as well as in \cite{Pillay-tworemarks} where the relationship with Zariski structures is made explicit. In any case this is just another formalism for algebraic differential equations, and when the base field of definition is the function field of a curve, reduces to the notion of a connection in the sense of Ehresmann. 
 
\begin{defn} Let $V$ be an algebraic variety, not necessarily irreducible, defined over a differential field $(F,\partial)$. A structure on $V$ of an {\em algebraic $\partial$-variety over $F$} is given by an extension $D_{V}$ of $\partial$ to a derivation of the structure sheaf ${\mathcal O}_V$.We call the pair $(V,D_{V})$ an algebraic $\partial$-variety over $F$.
\end{defn}

\begin{rmk} (i) If $V$ is affine irreducible, then $D_{V}$ as above is simply a derivation of the coordinate ring $F[V]$ of $V$ which extends $\partial$. 
\newline
(ii) In the same way as a derivation on ${\mathcal O}_{V}$ extending the trivial derivation on $F$ corresponds to a regular section of the tangent bundle $T(V)$ of $V$ (regular vector field) , a derivation of ${\mathcal O}_{V}$ extending $\partial$ corresponds to a regular section $s$ of a certain {\em shifted} tangent bundle $T_{\partial}(V)$ which we describe now. When $V$ is affine, say an algebraic subvariety of affine $n$-space, then $T_{\partial}(V)$ is the algebraic subvariety of affine $2n$-space defined by the equations  defining $V$, in indeterminates $x_{1},..,x_{n}$, together with  additional equations

$$\sum_{i=1}^{n}({\partial P}/\partial x_{i})({\bar x})u_{i} + P^{\partial}({\bar x})$$

 in indeterminates $x_{1},..,x_{n}, u_{1},..,u_{n}$, where $P$ ranges over the ideal $I(V)$ of $V$, and $P^{\partial}$ is obtained from $P$ by applying $\partial$ to the coefficients.  For a general algebraic variety $V$ over $F$, we can patch along affine charts to obtain $T_{\partial}(V)$. Note that if $V$ is defined over a field of constants then $T_{\partial}(V) = T(V)$, the tangent bundle of $V$.

Given a structure $(V,D_{V})$ on $V$ of an algebraic $\partial$-variety over $F$, we obtain a regular section $s$ (over $F$) of the projection $T_{\partial}(V) \to V$: Working in an affine chart, if $x_{1},..,x_{n}$ are the coordinate functions, and ${\bar a} = (a_{1},..,a_{n})\in V$, then
$s({\bar a})$ equals the evaluation of $(D_{V}(x_{1}),...,D_{V}(x_{n}))$ at ${\bar a}$.
 We sometimes write $(V,s)$ in place of $(V,D_{V})$.
 \newline
 (iii) Assuming $V$ to be an algebraic $\partial$-variety over $F$ where $V$ is (absolutely irreducible) and $F < K <{\mathcal U}$, then $D_{V}$ has a unique extension to a derivation of the structure sheaf of $V_{K}$ extending $\partial|K$. We sometimes identify this notationally with $D_{V}$. 
\end{rmk}

For $V$ a variety over $F$  as above, identifying $V$ with $V({\mathcal U})$ and working in an affine chart, given $x = (x_{1},..,x_{n})\in V$, then 
$(x,{\partial x})$ satisfies the equations defining $T_{\partial}(V)$. Hence can define $(V,s)^{\partial}$ to be $\{a\in V({\mathcal U}): \partial(a) = s(a)\}$. This is a {\em finite-dimensional differential algebraic variety}, whose order equals $dim(V)$. In fact any finite-dimensional definable set in ${\mathcal U}$ is {\em essentially} of the form $(V,s)^{\partial}$ for some algebraic variety $V$ over ${\mathcal U}$ and regular section $s:V\to T_{\partial}(V)$. 

\begin{rmk} Assume that $F = {\mathbb C}(t)$, and that $(V,s)$ is an absolutely irreducible algebraic $\partial$-variety over $F$. Then we can view $V$ as the generic fibre of some fibration $\pi:{\mathcal V} \to S$ where ${\mathcal V}$ is an irreducible complex algebraic variety, $S$ is $\mathbb C$ take away finitely many points, and $s$ becomes a regular vector field on ${\mathcal V}$ lifting the vector field $1$ on $S$. The differential polynomial equation $\partial(x) = s(x)$ translates to the differential equation 
$$dy/dt = s(y,t)$$
 on local analytic sections of $\pi$. If $D$ is a disc in $S$, and $K$ is a field of meromorphic functions on $D$ which embeds in ${\mathcal U}$ over ${\mathcal C}(t)$, then the points of $(V,s)^{\partial}$ in $K$ are solutions of the above differential equation.
\end{rmk}

Let us now specialise (for simplicity of notation and also because of the context of this paper) to the case where $V\subseteq {\mathcal U}^{n}$ is irreducible affine, so $(V,s)^{\partial}$ is a differential algebraic subvariety of ${\mathcal U}^n$. $F$ is still a differential field of definition for $(V,s)$.
\begin{defn} By a an algebraic ${\partial}$-subvariety $W$ of $(V,D_{V})$ (or the corresponding $(V,s)$) we mean a subvariety $W$ of $V$ (defined over some $K>F$) such that $s|W$ is a section of $T_{\partial}(W)\to W$. Equivalently $D_{V}$ (or rather its canonical extension to $K[V]$) preserves the ideal $I_{K}(W)$ of $W$. 
\end{defn}

The following is remarked in \cite{Hrushovski-Itai}, \cite{Buium} as well as \cite{Pillay-tworemarks}:
\begin{fct} The map taking a Kolchin closed subset $X$ of $(V,s)^{\partial}$ to its Zariski closure, established a bijection between the Kolchin closed subsets of $(V,s)^{\partial}$ and the algebraic $\partial$-subvarieties of $(V,s)$. (The inverse takes an algebraic $\partial$ subvariety $Y$ of $(V,s)$ to $Y\cap (V,s)^{\partial}$.) Moreover $X$ is irreducible as a Kolchin closed set iff its Zariski closure is irreducible as a Zariski closed set. 
\end{fct}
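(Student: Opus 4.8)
The plan is to establish the bijection by exhibiting the two maps described and checking they are mutually inverse, using Fact 2.3 (quantifier elimination) as the bridge between the Kolchin-closed and Zariski-closed worlds. First I would fix the correspondence at the level of ideals. Working in an affine chart, a Kolchin closed subset $X$ of $(V,s)^{\partial}$ is the zero set of some radical differential ideal $\mathfrak{p}$ of the differential polynomial ring; its Zariski closure $\overline{X}$ is the zero set of the (ordinary) ideal $\mathfrak{p}\cap K[\bar x]$. The content to verify is that this $\overline{X}$ is stable under $D_V$, i.e. that $D_V(I_K(\overline{X}))\subseteq I_K(\overline{X})$, so that $\overline{X}$ really is an algebraic $\partial$-subvariety in the sense of Definition 2.17. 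This is where the defining condition $\partial(a)=s(a)$ on points of $(V,s)^{\partial}$ does the work: for any $P\in I_K(\overline{X})$ and any point $a\in X$ one has $P(a)=0$, and differentiating along the flow (using $\partial(a)=s(a)$) shows $(D_VP)(a)=0$; since $X$ is Zariski dense in $\overline{X}$ and $D_VP$ is again a polynomial, $D_VP$ vanishes on all of $\overline{X}$, giving the required stability.

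Conversely, given an algebraic $\partial$-subvariety $W$ of $(V,s)$, I would send it to $W\cap (V,s)^{\partial}$, which is manifestly Kolchin closed. The two round-trips then need to be checked. For $X\mapsto \overline{X}\mapsto \overline{X}\cap(V,s)^{\partial}$, the inclusion $X\subseteq \overline{X}\cap(V,s)^{\partial}$ is clear, and the reverse inclusion follows because any point of $(V,s)^{\partial}$ lying in $\overline{X}$ already satisfies all the differential equations cutting out $X$: here I would invoke that on $(V,s)^{\partial}$ every derivative $\partial^{k}(\bar x)$ is a polynomial (iterated $s$) in $\bar x$, so the differential ideal defining $X$ is determined by its purely polynomial part together with the section $s$, and a point of $\overline X$ in $(V,s)^\partial$ satisfies that polynomial part by definition of the Zariski closure. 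For $W\mapsto W\cap(V,s)^{\partial}\mapsto \overline{W\cap(V,s)^{\partial}}$, I would need that $W\cap(V,s)^{\partial}$ is Zariski dense in $W$; this is the point where $DCF_{0}$ enters, since differential-closedness of $\mathcal U$ guarantees that the $\partial$-subvariety $W$, whose defining ideal is $D_V$-stable, has ``enough'' points satisfying $\partial(a)=s(a)$ to be Zariski dense (one solves the relevant differential system inside $\mathcal U$).

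For the final clause on irreducibility, I would argue that irreducibility of $X$ as a Kolchin closed set means its differential ideal is prime, and irreducibility of $\overline{X}$ as a Zariski closed set means $I_K(\overline X)=\mathfrak p\cap K[\bar x]$ is prime; the correspondence above identifies the differential coordinate ring of $X$ with (a localization related to) $K[\overline X]$ equipped with $D_V$, so primeness transfers in both directions. Concretely, a nontrivial factorization of $\overline X$ into Zariski-closed pieces, intersected with $(V,s)^{\partial}$, yields a nontrivial Kolchin decomposition of $X$ by the bijection, and vice versa.

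The main obstacle I anticipate is the Zariski density in the converse direction, i.e. showing $W\cap(V,s)^{\partial}$ is Zariski dense in $W$ for every algebraic $\partial$-subvariety $W$; this is precisely the input that requires $\mathcal U$ to be differentially closed (saturated) rather than an arbitrary differential field, and it is what makes the correspondence a genuine bijection rather than merely an injection. Everything else is essentially bookkeeping with the defining equations and the observation that on $(V,s)^{\partial}$ all higher derivatives are polynomial functions of the coordinates via iterating $s$, which collapses the differential algebra onto ordinary algebra on $V$.
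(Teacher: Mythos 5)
The paper gives no proof of this Fact at all --- it is stated with citations to \cite{Hrushovski-Itai}, \cite{Buium} and \cite{Pillay-tworemarks} --- so there is no in-paper argument to compare against; your proof is the standard one underlying those references, and it is correct: the forward stability of $I_K(\overline{X})$ under $D_V$ via the identity $\partial(P(a))=(D_VP)(a)$ for $a$ with $\partial(a)=s(a)$, the collapse of the differential ideal of $X$ to ordinary polynomials by iterating $s$, and the use of differential closedness for Zariski density of $W\cap(V,s)^{\partial}$ in $W$ are exactly the right three ingredients. The one detail worth writing out in the density step is the reduction to irreducible components: in characteristic $0$ the minimal primes of the $D_V$-stable radical ideal $I_K(W)$ are themselves $D_V$-stable (the same fact, 1.15 of \cite{Marker}, that the paper invokes in the proof of Corollary 2.18), so each component $W_0$ carries a derivation $D_{W_0}$ of its coordinate ring extending $\partial$, and embedding the differential field $(K(W_0),D_{W_0})$ into $\mathcal U$ over $K$ sends the generic point of $W_0$ to a point $a\in W_0(\mathcal U)$ with $\partial(a)=s(a)$ that is Zariski-generic in $W_0$; this is precisely how ``one solves the relevant differential system inside $\mathcal U$''.
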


\begin{cor} $(V,s)^{\partial}$ is strongly minimal iff $V$ is positive-dimensional and $(V,s)$ has no proper (irreducible) positive-dimensional algebraic ${\partial}$-subvarieties.
\end{cor}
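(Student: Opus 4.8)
The plan is to prove Corollary 2.18 by translating the strong minimality of the Kolchin closed set $(V,s)^{\partial}$ directly through the bijection of Fact 2.17. Recall from the discussion following Remark 2.14 that $(V,s)^{\partial}$ is a finite-dimensional differential algebraic variety whose order equals $\dim(V)$. By the characterization of strong minimality for differential algebraic varieties given just after Definition 2.1, $(V,s)^{\partial}$ is strongly minimal if and only if it is infinite and has no proper infinite Kolchin closed (i.e.\ differential algebraic) subvariety. So the entire task reduces to matching these two conditions against the two conditions on the right-hand side: positive-dimensionality of $V$, and the absence of proper positive-dimensional algebraic $\partial$-subvarieties.

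First I would handle the infiniteness condition. The set $(V,s)^{\partial}$ is infinite precisely when $\mathrm{order}((V,s)^{\partial}) = \dim(V) \geq 1$, since a zero-dimensional $V$ yields a finite set while a positive-dimensional one yields a generic point of transcendence degree $\dim(V)>0$ over $F$, hence infinitely many points. This gives the equivalence ``$(V,s)^{\partial}$ infinite $\iff$ $V$ positive-dimensional.'' Next I would apply Fact 2.17 to the proper infinite Kolchin closed subsets. Under the Zariski-closure bijection, a proper infinite Kolchin closed subset $X$ of $(V,s)^{\partial}$ corresponds to a proper algebraic $\partial$-subvariety $W = \overline{X}^{\mathrm{Zar}}$ of $(V,s)$, and conversely $W$ gives back $X = W \cap (V,s)^{\partial}$. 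The key point is that $X$ is \emph{infinite} exactly when $W$ is \emph{positive-dimensional}: by the order-equals-dimension fact applied to $(W, s|W)^{\partial} = X$, we have $\mathrm{order}(X) = \dim(W)$, so $X$ is infinite iff $\dim(W)\geq 1$. Finiteness of $X$ corresponds to $\dim(W)=0$. The moreover-clause of Fact 2.17 lets me assume $W$ irreducible when needed, matching the parenthetical ``(irreducible)'' in the statement.

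Assembling these, $(V,s)^{\partial}$ has a proper infinite Kolchin closed subvariety if and only if $(V,s)$ has a proper positive-dimensional algebraic $\partial$-subvariety; negating, the no-proper-infinite-subvariety condition matches the no-proper-positive-dimensional-$\partial$-subvariety condition exactly. Combining with the infiniteness equivalence yields the corollary. I expect the only genuinely delicate point, rather than the bookkeeping, to be verifying that the Zariski closure $W$ of a proper Kolchin closed subset is itself a \emph{proper} subvariety of $V$ and that properness transfers correctly in both directions; this is guaranteed by the bijectivity asserted in Fact 2.17 together with the irreducibility clause there, so no independent argument is needed. The remaining steps are routine translations through the order-dimension identity.
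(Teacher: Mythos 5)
Your proposal is correct and follows essentially the same route as the paper: the paper's own proof also combines the Zariski-closure bijection of Fact 2.16 with quantifier elimination (strong minimality of a differential algebraic variety reduces to having no proper infinite Kolchin closed subset) and the order-equals-dimension identity to match ``infinite'' with ``positive-dimensional''. You simply spell out the bookkeeping that the paper leaves implicit, and the only discrepancy is a harmless off-by-one in your references to the numbered Fact and Corollary.
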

\begin{proof} This follows from Fact 2.16 and quantifier elimination for $DCF_{0}$. For example assuming the right hand side, Fact 2.16 implies that $(V,s)^{\partial}$ is infinite and has no proper infinite Kolchin closed sets. As any definable subset of $(V,s)^{\partial}$ is a finite Boolean combination of Kolchin closed sets we deduce strong minimality. Clearly it suffices to consider only irreducible Kolchin closed sets.
\end{proof}

The following special situation is relevant for the Painlev\'e equations: Let $(V,D_{V})$ be an algebraic $\partial$-variety over $F$, where $V$ is ${\mathbb A}^{2}$, affine $2$-space. So $D_{V}$ is simply a derivation of the polynomial ring $F[x,y]$ extending $\partial|F$, and for any $K>F$, $D_{V}\otimes_{\partial}K$ is the unique extension of $D_{V}$ to a derivation of $K[x,y]$ which extends $\partial|K$. We may often notationally identify $D_{V}\otimes_{\partial}K$ with $D_{V}$.

\begin{cor} (a) In the above situation ($V = {\mathbb A}^{2}$) $(V,D_{V})^{\partial}$ is strongly minimal if and only if there is no $K \geq F$ and nonconstant polynomial $P(x,y)$ over $K$
such that $D_{V}(P) = GP$ for some polynomial $G(x,y)$ over $K$ 
\newline
(b) Moreover these conditions are also equivalent to each of
\newline
- $(V,D_{V})$ has no $1$-dimensional algebraic $\partial$-subvariety,
\newline
-  $(V,D_{V})^{\partial}$ has no order $1$-definable subset. 
\end{cor}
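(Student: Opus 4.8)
The plan is to deduce everything from Corollary 2.17 together with the dictionary of Fact 2.16, reducing the conditions in (a) and (b) to a computation about \emph{Darboux polynomials} (polynomials $P$ with $D_V(P)=GP$). The starting observation is that since $V={\mathbb A}^2$ is irreducible of dimension $2$, a \emph{proper} positive-dimensional irreducible subvariety is necessarily $1$-dimensional. Hence Corollary 2.17 specializes to: $(V,D_V)^\partial$ is strongly minimal if and only if $(V,D_V)$ has no (irreducible) $1$-dimensional algebraic $\partial$-subvariety. This already identifies strong minimality with the first bullet of part (b), so it remains to match both with the polynomial condition of (a) and with the order-$1$ condition.

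First I would translate $1$-dimensional algebraic $\partial$-subvarieties into polynomials. Given $K\geq F$ and an irreducible $1$-dimensional algebraic $\partial$-subvariety $W$ over $K$, the ideal $I_K(W)$ is a height-$1$ prime of the UFD $K[x,y]$, hence principal, say $I_K(W)=(P)$ with $P$ irreducible (in particular nonconstant). By Definition 2.15, $W$ being an algebraic $\partial$-subvariety means precisely that $D_V$ preserves $(P)$, i.e. $D_V(P)\in(P)$, which is the condition $D_V(P)=GP$. For the converse I would start from a nonconstant $P$ over some $K$ with $D_V(P)=GP$ and pass to an irreducible factor. Writing $P=\prod_i Q_i^{e_i}$ with the $Q_i$ distinct irreducibles, the logarithmic-derivative identity gives $G=\sum_i e_i\,D_V(Q_i)/Q_i$ in the fraction field; multiplying through by $\prod_i Q_i$ and reducing modulo a fixed $Q_k$ forces $Q_k\mid D_V(Q_k)$, since in characteristic $0$ one has $e_k\neq 0$ and $Q_k$ is prime not dividing the other $Q_j$. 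Thus $D_V(Q_k)=H_kQ_k$, so each irreducible factor $Q_k$ is again a Darboux polynomial and $V(Q_k)$ is an irreducible $1$-dimensional algebraic $\partial$-subvariety. This establishes (a).

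For the order-$1$ bullet of (b), I would argue through Fact 2.16 and the fact that the order of a Kolchin closed subset equals the dimension of its Zariski closure. Since $(V,D_V)^\partial$ is irreducible of order $\dim V=2$, strong minimality instantly forbids any order-$1$ definable subset, every definable subset being forced to be finite or cofinite. Conversely, an order-$1$ definable subset has an order-$1$ Kolchin closure, whose Zariski closure is a $1$-dimensional algebraic $\partial$-subvariety by Fact 2.16; and any $1$-dimensional algebraic $\partial$-subvariety $W$ returns the order-$1$, hence proper and infinite, Kolchin closed (so definable) subset $W\cap(V,D_V)^\partial$. This closes the three-way equivalence.

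The step I expect to be the main obstacle is the backward direction of the polynomial translation, namely extracting from an arbitrary Darboux polynomial $P$ an \emph{irreducible} one satisfying the same type of equation: the factorization/logarithmic-derivative argument must be done carefully to guarantee that each factor is again a Darboux polynomial and genuinely cuts out a $1$-dimensional subvariety. By contrast the forward directions are immediate, and the order-versus-dimension bookkeeping for general (non-Kolchin-closed) definable sets is routine given Fact 2.16.
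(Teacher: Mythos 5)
Your proof is correct and follows the same skeleton as the paper's: reduce via Corollary 2.17 to the existence of a proper positive-dimensional (hence $1$-dimensional, hence principal) algebraic $\partial$-subvariety, and use that $D_V$ preserving a principal ideal $(P)$ is exactly the Darboux condition $D_V(P)=GP$. The one step where you genuinely diverge is the direction you correctly flag as the main obstacle, namely passing from an arbitrary nonconstant Darboux polynomial $P$ to an actual $\partial$-subvariety: you factor $P$ into irreducibles in the UFD $K[x,y]$ and run the logarithmic-derivative argument to show each irreducible factor is again a Darboux polynomial, whereas the paper instead observes that the ideal $(P)$ is $D_V$-invariant, invokes the standard fact that the radical of a $D_V$-invariant ideal is $D_V$-invariant (Lemma 1.15 of Marker's notes), and takes the resulting (possibly reducible) curve as the $\partial$-subvariety. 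Your route is more hands-on and yields the slightly stronger conclusion that an \emph{irreducible} Darboux polynomial exists; the paper's is shorter because it outsources the factorization to a cited lemma and never needs irreducibility at that point (irreducible components of a $\partial$-subvariety are handled separately via Fact 2.16). Your treatment of the order-$1$ clause of (b), via the order of a definable set equalling the order of its Kolchin closure and Fact 2.16, fills in what the paper dismisses as ``clear'' and is fine.
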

\begin{proof} (a)Suppose first there IS nonconstant $P(x,y)$ over $K>F$ such that $D_{V}(P) = GP$ for some $G$. Let $I \subseteq K[x,y]$ be the ideal generated by $F$. It follows that $I$ is $D_{V}$-invariant. So the radical $\sqrt I$ of $I$ is also $D_{V}$-invariant (see 1.15 of \cite{Marker}). So $\sqrt I$ is the ideal of an algebraic $\partial$-subvariety $W$ of $V$ which has to be a curve. By Corollary 2.17, $(V,D_{V})^{\partial}$ is not strongly minimal. 

Conversely if $(V,D_{V})^{\partial}$ is not strongly minimal, then by Corollary 2.17 $(V,D_{V})$ has a proper positive-dimensional $\partial$-subvariety, which can be assumed to be irreducible (see above) hence is an irreducible plane curve $W$ defined over $K>F$ say. Then the ideal $I = I_{K}(W)$ is principal, generated by an irreducible polynomial $P(x,y)$ say. As $I$ is $D_{V}$-invariant it follows that $D_{V}(P) = GP$ for some $G$. 
\newline 
(b) is clear.

\end{proof}

 \begin{rmk} The right hand of Corollary 2.18 (a) is precisely Umemura's ``Condition J" on the vector field $D_{V}$. (See \cite{Umemura1}.)
 
 \end{rmk}

 \subsection{$\omega$-categoricity}
 
 A (infinite, one-sorted) structure $M$ in a countable language $L$ is said to be {\em $\omega$-categorical} if for each $n$ there are only finitely many $\emptyset$-definable subsets of $M$. The reason for the nomenclature is that $M$ is $\omega$-categorical if and only if $Th(M)$ (the complete first order theory of $M$) has exactly one countable model, up to isomorphism. For the rest of this section we recall some well-known facts, and use model-theoretic notions freely. 
 
 We would like to have the notion of  a definable (possibly with parameters) set $X$ in a structure $M$ being $\omega$-categorical. On the face of it this depends on a choice $A$ of some parameter set over which $X$ is defined, and we say that $X$ is $\omega$-categorical {\em in } M {\em over A} there are finitely many subsets of $X^{n}$ which are definable over $A$, for each $n$.  For models of an $\omega$-stable theory $T$ we get a robust notion.
 
 \begin{lem} Suppose $T$ is $\omega$-stable and $M$ is a model of $T$. Let $X\subseteq M^{n}$ be definable, and let $b,c$ be {\em finite} tuples from $N$ such that $X$ is definable over $b$ and also over $c$. Then $X$ is $\omega$-categorical in $M$ over $b$ iff $X$ is $\omega$-categorical in $M$ over $c$ (and in this case we just say $X$ is $\omega$-categorical in $M$). 
 \end{lem}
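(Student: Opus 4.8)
The plan is to show that the two notions of $\omega$-categoricity for $X$ (over $b$ versus over $c$) coincide when $T$ is $\omega$-stable, and the most natural route is through the behaviour of types and the fact that in an $\omega$-stable theory one has good control over definability of types and over $\mathrm{acl}$. First I would reduce the statement to a symmetric one: it suffices to prove one implication, say that $\omega$-categoricity over $b$ implies $\omega$-categoricity over $c$, since $b$ and $c$ play interchangeable roles. The key observation is that $X$ being $\omega$-categorical in $M$ over a parameter tuple $a$ (with $X$ defined over $a$) is equivalent, by the usual Ryll-Nardzewski-style characterization, to the statement that for each $n$ there are only finitely many complete types over $a$ realized in $X^n$; equivalently, that $\mathrm{acl}(a,\bar d) \cap X$ is finite, uniformly, for tuples $\bar d$ — more precisely that the group of elementary permutations, or the orbit-counting, is finite at each arity.

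The heart of the argument is to relate types over $b$ and types over $c$. Since $X$ is definable over both $b$ and $c$, the canonical parameter (canonical base) $e$ of $X$ lies in $\mathrm{dcl}(b)\cap\mathrm{dcl}(c)$, and $X$ is definable over the \emph{finite} tuple $e$. I would argue that $X$ is $\omega$-categorical over $b$ if and only if it is $\omega$-categorical over $e$: one direction is immediate since $e\in\mathrm{dcl}(b)$ so every $e$-definable set is $b$-definable (giving finiteness over $e$ from finiteness over $b$ is the easy containment), and for the converse one uses $\omega$-stability to see that passing from the canonical parameter $e$ to the larger finite tuple $b$ can only multiply the number of types over each arity by a finite factor. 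The crucial input here is that over a finite set in an $\omega$-stable theory, each of the finitely many types in $X^n$ over $e$ splits into only finitely many types over $b$, because $b$ is a \emph{finite} tuple and $\omega$-stability bounds the relevant Morley ranks/degrees; this is where $\omega$-stability is genuinely used, as the statement can fail for arbitrary $T$. Running the same comparison for $c$ then gives $\omega$-categoricity over $b$ $\iff$ over $e$ $\iff$ over $c$, which is exactly the claim.

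The main obstacle I anticipate is making precise and correct the step ``finitely many types over $e$ yields finitely many types over the finite extension $b$'': one must be careful that it is the \emph{finiteness} of the tuple $b$ together with $\omega$-stability (ensuring types over $b$ do not fork-multiply without bound, and that $\mathrm{acl}(b)$ adds only finitely much structure on each $X^n$) that controls the count, rather than any tameness of $X$ itself. Concretely, I would fix $n$, list the finitely many $e$-types $p_1,\dots,p_k$ realized in $X^n$, and bound the number of $b$-types extending each $p_i$ by a quantity governed by the Morley rank and degree of $p_i$ together with the datum of $b$; summing over $i$ gives a finite total. An equivalent and perhaps cleaner packaging is to invoke that in an $\omega$-stable theory a definable set $X$ is $\omega$-categorical (over some/any finite set of parameters) precisely when $X$ together with its induced structure has finite Morley rank with every definable subset of finite Morley rank being a finite Boolean combination controlled by finitely many types — a property intrinsic to $X$ and hence independent of whether we name $b$ or $c$. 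I would favour the type-counting formulation in the write-up, as it keeps the dependence on $\omega$-stability transparent and avoids invoking heavier machinery than needed.
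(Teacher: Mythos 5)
There is a genuine gap at the heart of your argument. The step you yourself flag as the crucial one --- ``each of the finitely many types in $X^n$ over $e$ splits into only finitely many types over $b$, because $b$ is a finite tuple and $\omega$-stability bounds the relevant Morley ranks/degrees'' --- is not a true general fact and is not established by the reasoning you give. Morley rank and degree bound the number of \emph{nonforking} extensions of a type, but say nothing about the forking ones: already in $ACF_0$, the unique type over $\emptyset$ of a transcendental element has infinitely many extensions to complete types over a single transcendental parameter $b$ (the generic one plus one algebraic type for each element of $\mathbb{Q}(b)^{alg}$). So ``finitely many types over $e$ realized in $X^n$'' does not, by rank/degree considerations alone, yield ``finitely many over $b$''; controlling the algebraic/forking extensions is precisely the content of the lemma, so your argument is circular at exactly the point where the work has to happen. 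A second, related issue is your choice of $e$: you take the canonical parameter of the definable set $X$, which lies in $\mathrm{dcl}(b)\cap\mathrm{dcl}(c)$ but has no useful relation to the elements of $X$, so it gives you only the easy inclusion of definable sets and no leverage on the hard direction.

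The paper's proof avoids this by choosing a different imaginary. After reducing (by adding parameters) to the case where $X$ is $\emptyset$-definable and $\omega$-categorical over $\emptyset$, it considers the \emph{canonical base of $tp(c/X)$}: by $\omega$-stability this type is definable over an imaginary $e\in X^{eq}$, and such an $e$ lies in the definable closure of a \emph{finite tuple $d$ of elements of $X$ itself}. Consequently every $c$-definable subset of $X^m$ is already $e$-definable (its trace on $X^m$ is given by the $\phi$-definition of $tp(c/X)$ over $e$), and $\omega$-categoricity of $X$ over $d$ follows from $\omega$-categoricity over $\emptyset$ by the standard Ryll--Nardzewski observation that naming finitely many elements \emph{of $X$} preserves $\omega$-categoricity (the count at arity $n$ over $d$ is absorbed into the count at arity $n+|d|$ over $\emptyset$). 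That is the mechanism that replaces your unproved type-splitting bound; to repair your write-up you would need to import it, or otherwise use the $\omega$-categoricity of $X$ itself (not just $\omega$-stability of $T$) when bounding the extensions of types from $e$ to $b$.
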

 \begin{proof} It is enough (by adding parameters) to prove that if $X$ is $\emptyset$-definable, and $\omega$-categorical over $\emptyset$ and $c$ is any finite tuple from $M$ then $X$ is $\omega$-categorical over $c$. Now by $\omega$-stability, $tp(c/X)$ is definable over an imaginary element $e\in X^{eq}$.  As $e$ is in the definable closure of a finite tuple from $X$  (and $\omega$-categoricity is preserved after naming a finite tuple from $X$), we see that $X$ is $\omega$-categorical over $e$, so also over $c$ (as every $c$-definable subset of $X^{m}$ is $e$-definable). 
 \end{proof} 
 
 \begin{lem} ($T$ $\omega$-stable.) Let $X$ be a strongly minimal definable set (in a model of an $\omega$-stable theory). Then $X$ is $\omega$-categorical if and only if for any finite tuple $b$ from $M$ over which $X$ is defined $acl(b)\cap X$ is finite. 
 \end{lem}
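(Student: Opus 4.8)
The plan is to recast $\omega$-categoricity of $X$ as a type-counting statement and then exploit the (essentially unique) generic type of a strongly minimal set. Since both the number of $b$-definable subsets of $X^{n}$ and the set $acl(b)\cap X$ are invariants of $Th(M)$ together with the parameter $b$, I may pass to a saturated elementary extension and argue there, in particular using automorphisms. For a finite tuple $b$ over which $X$ is defined, let $S_{n}^{X}(b)$ denote the space of complete types over $b$ concentrating on $X^{n}$. The relativized Ryll-Nardzewski theorem, proved by the same Stone-duality argument as the classical one (the $b$-definable subsets of $X^{n}$ form a Boolean algebra whose ultrafilters are exactly the members of $S_{n}^{X}(b)$, and a Boolean algebra is finite iff it has finitely many ultrafilters), says that $X$ is $\omega$-categorical over $b$ iff $S_{n}^{X}(b)$ is finite for every $n$. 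By Lemma 2.20 the property of being $\omega$-categorical is independent of the choice of finite defining tuple, so it suffices to work with a single $b$.

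For the direction from $\omega$-categoricity to finiteness of $acl(b)\cap X$, fix any finite tuple $b$ over which $X$ is defined; by Lemma 2.20, $X$ is $\omega$-categorical over $b$, so $S_{1}^{X}(b)$ is finite. Every element of $acl(b)\cap X$ realizes an \emph{algebraic} type in $S_{1}^{X}(b)$, i.e. one with finitely many realizations. As there are finitely many such types and each has finitely many realizations, $acl(b)\cap X$ is finite. Strong minimality is not needed for this direction.

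The substantive direction is the converse, where I would use strong minimality through the fact that over any set $A$ with $X$ defined over $A$ there is a \emph{unique} non-algebraic $1$-type concentrating on $X$, every other $1$-type of an element of $X$ being algebraic. Assuming $acl(c)\cap X$ is finite for every finite defining tuple $c$, I would prove by induction on $n$ that $S_{n}^{X}(b)$ is finite. For $n=1$ the $1$-types over $b$ realized in $X$ are the single generic type together with the algebraic ones, and the latter have all their realizations inside the finite set $acl(b)\cap X$, so $S_{1}^{X}(b)$ is finite. For the inductive step, assume $S_{m}^{X}(c)$ is finite for all $m<n$ and all finite defining tuples $c$, and for each $p\in S_{n-1}^{X}(b)$ fix a realization $\bar c_{p}\in X^{n-1}$. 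Given $\bar a=\bar a'\,a_{n}\in X^{n}$ with $\bar a'$ realizing $p$, choose $\sigma$ fixing $b$ with $\sigma(\bar a')=\bar c_{p}$; then $tp(\bar a/b)=tp(\bar c_{p}\,\sigma(a_{n})/b)$, which is recovered from $p$ together with $r:=tp(\sigma(a_{n})/b\bar c_{p})\in S_{1}^{X}(b\bar c_{p})$ (because $\bar c_{p}$ is a fixed reference tuple, so any two realizations of $r$ give the same type of the concatenation over $b$). This gives an injection $tp(\bar a/b)\mapsto(p,r)$, whence $|S_{n}^{X}(b)|\le\sum_{p\in S_{n-1}^{X}(b)}|S_{1}^{X}(b\bar c_{p})|$. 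Each $b\bar c_{p}$ is a finite defining tuple, so $acl(b\bar c_{p})\cap X$ is finite by hypothesis and the $n=1$ case makes $S_{1}^{X}(b\bar c_{p})$ finite; as $S_{n-1}^{X}(b)$ is finite by induction, so is $S_{n}^{X}(b)$. Relativized Ryll-Nardzewski then gives $\omega$-categoricity of $X$.

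The only delicate point is the reduction in the inductive step: one must verify that $tp(\bar a/b)$ is genuinely determined by the pair $(p,r)$, which uses homogeneity of the saturated model (to produce $\sigma$) and the fact that $\bar c_{p}$ is a fixed reference realization, so that a complete type over $b\bar c_{p}$ determines the full type over $b$ of the concatenation. The real content, though, is the dichotomy ``generic or algebraic'' for $1$-types furnished by strong minimality: this is what converts finiteness of $acl(b\bar c_{p})\cap X$ into finiteness of $S_{1}^{X}(b\bar c_{p})$, with everything else being standard Ryll-Nardzewski bookkeeping.
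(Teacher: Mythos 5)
Your proof is correct and is essentially the paper's own argument with the compressed steps spelled out: the paper's ``clear'' reduction of $\omega$-categoricity to counting one-variable definable sets over finite tuples is exactly your Ryll--Nardzewski type-counting induction, and the finite-or-cofinite dichotomy giving ``finitely many $1$-types over $b$ iff $acl(b)\cap X$ is finite'' is your $n=1$ case, with the relativization to $X$ handled directly rather than via the previous lemma. One point to make explicit: in the inductive step the reference realizations $\bar c_{p}$ should be chosen in $M$ itself rather than in the saturated extension --- this is possible because $S^{X}_{n-1}(b)$ is finite by induction, so each of its types is isolated and hence realized in $M$ --- since the hypothesis that $acl(c)\cap X$ is finite is only assumed for defining tuples $c$ from $M$, and its transfer to tuples of the saturated extension is not automatic.
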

 \begin{proof} If $M$ is any structure then it is clear that $M$ is $\omega$-categorical just if for any finite tuple $a$ from $M$, there are only finitely many $a$-definable subsets of $M$. If $M$ is also strongly minimal and $a$ is a finite tuple from $M$, then as any $a$-definable subset of $M$
 is finite or cofinite, there are only finitely many $a$-definable subsets of $M$ iff $acl(a)$ is finite. So the Lemma holds for a structure $M$ in place of $X$. The full statement follows as in the proof of the previous Lemma.
 \end{proof}
 
 \begin{rmk} ($T$ $\omega$-stable.) If $X$ and $Y$ are nonorthogonal strongly minimal sets then $X$ is $\omega$-categorical iff $Y$ is $\omega$-categorical.
 
 \end{rmk}
 
 We now specialise to $T = DCF_{0}$, working in an ambient model which could be taken as ${\mathcal U}$. 
 \begin{lem} ($DCF_{0}$)  If $X$ is a definable, strongly minimal $\omega$-categorical set, then $X$ is geometrically trivial.
 \end{lem}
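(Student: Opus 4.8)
The plan is to invoke the trichotomy theorem (Theorem 2.8) and to rule out the first two alternatives for an $\omega$-categorical $X$, leaving geometric triviality. The two tools I will use are Lemma 2.21, which says that for a strongly minimal set $\omega$-categoricity is equivalent to $acl(b)\cap X$ being finite for a finite tuple $b$ over which $X$ is defined, and Remark 2.22, which says that $\omega$-categoricity is invariant under nonorthogonality. Combining these, it suffices to check that the two ``nontrivial'' strongly minimal sets appearing in the trichotomy, namely $\mathbb{C}$ and $A^{\sharp}$, both fail to be $\omega$-categorical; then neither case (i) nor case (ii) of Theorem 2.8 can hold for an $\omega$-categorical $X$, so only case (iii) remains.

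For case (i): the constants $\mathbb{C}$ are defined over $\emptyset$, and $acl(\emptyset)$ in $\mathcal{U}$ is the field-theoretic algebraic closure of $\mathbb{Q}$, an infinite subset of $\mathbb{C}$. Hence $acl(\emptyset)\cap\mathbb{C}$ is infinite, so by Lemma 2.21 $\mathbb{C}$ is not $\omega$-categorical. By Remark 2.22 no strongly minimal set nonorthogonal to $\mathbb{C}$ is $\omega$-categorical, which rules out (i).

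For case (ii): let $A$ be a simple abelian variety not descending to $\mathbb{C}$, defined over a finite tuple $b$, and consider $A^{\sharp}$. The key point is that $A^{\sharp}$ contains the full torsion subgroup $Tor(A)$ of $A$; indeed $A^{\sharp}$ is the Kolchin closure of $Tor(A)$, so in particular every torsion point lies in $A^{\sharp}$. Each $n$-torsion point lies in the finite $b$-definable set $A[n]$, hence in $acl(b)$. Therefore $acl(b)\cap A^{\sharp}\supseteq Tor(A)$ is infinite, and Lemma 2.21 shows that $A^{\sharp}$ is not $\omega$-categorical. Again by Remark 2.22, no $X$ nonorthogonal to such an $A^{\sharp}$ is $\omega$-categorical, ruling out (ii). Thus the only remaining possibility in Theorem 2.8 is (iii), geometric triviality, as desired.

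The step I expect to require the most care is the claim that $A^{\sharp}$ is not $\omega$-categorical, which rests on the fact that $A^{\sharp}$ contains infinitely many torsion points, all algebraic over the parameters defining $A$. This uses the description of $A^{\sharp}$ as the Kolchin closure of the torsion subgroup of $A$, a standard property of the Manin kernel which is implicit in Fact 2.5 but which I would want to state explicitly; the analogous ingredient for $\mathbb{C}$, namely the infinitude of the algebraic numbers, is immediate.
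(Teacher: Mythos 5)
Your proof is correct, but it takes a genuinely different route from the paper's. The paper does not pass through the trichotomy at all: it first invokes Zilber's classical theorem that an $\omega$-categorical strongly minimal set is modular, then uses the reduction (Hrushovski) of non-trivial modular sets to strongly minimal groups, transfers $\omega$-categoricity to such a group $G$ by Remark 2.22, and derives a contradiction from the fact that $G$ definably embeds in an algebraic group in characteristic $0$ and hence has only \emph{finitely} many elements of each finite order --- incompatible with an infinite $\omega$-categorical group, which is locally finite with boundedly many possible orders. You instead take Theorem 2.8 as a black box and kill cases (i) and (ii) separately via the $acl$ criterion of Lemma 2.21: $\mathbb{Q}^{alg}\subseteq acl(\emptyset)\cap\mathbb{C}$ for case (i), and $Tor(A)\subseteq acl(b)\cap A^{\sharp}$ for case (ii). Both arguments are sound, and it is a pleasant symmetry that each turns on torsion in an algebraic group, but in opposite directions: the paper uses that each $n$-torsion set is finite, you use that the total torsion is infinite and algebraic over the defining parameters. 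The trade-off is in the imported ingredients: the paper needs Zilber's modularity theorem and the modular-group reduction but nothing about the structure of $A^{\sharp}$ beyond its existence, whereas you need the fact that $A^{\sharp}$ contains (indeed is the Kolchin closure of) the torsion subgroup of $A$. That fact is standard (it is the ``Manin kernel'' description, found e.g.\ in the Marker article on Manin kernels cited in the bibliography) but is not stated anywhere in the paper, and you are right to flag it as the one step requiring an explicit external citation; with that reference supplied, your argument is complete.
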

 \begin{proof} Firstly by a classical result of Zilber (see Theorem 4.17 of \cite{Pillay-book}) $X$ is modular. As remarked above, if $X$ is not geometrically trivial, $X$ is nonorthogonal to a strongly minimal group $G$ which is also $\omega$-categorical by Remark 2.22 as well as commutative. But $G$ definably embeds in an algebraic group $H$ hence (as characteristic is $0$), $G$ has only finitely many elements of any given finite order. This contradicts $\omega$-categoricity. 
 \end{proof}
 
 See \cite{Pillay-Fields} for an exposition of the following result of Hrushovski \cite{Hrushovski-Jouanalou}:
 \begin{fct} ($DCF_{0}$)  Let $X$ be strongly minimal and of order $1$ and orthogonal to the constants. Then $X$ is $\omega$-categorical (hence geometrically trivial by Lemma 2.23)
 \end{fct}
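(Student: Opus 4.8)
The plan is to go through Jouanolou's theorem on the finiteness of algebraic leaves of a foliation---as the very name of \cite{Hrushovski-Jouanalou} suggests---converting ``infinitely many algebraic points'' into ``rational first integral'', which in turn forces nonorthogonality to $\mathbb{C}$ and contradicts the hypothesis. By Lemma 2.21 it suffices to fix a finite tuple $b$ generating a differential field $F$ over which $X$ is defined and to prove that $acl(F)\cap X$ is finite. Since $X$ has order $1$, Corollary 2.17 and the discussion preceding it let me present $X$ (up to nonorthogonality, which by Remark 2.22 preserves $\omega$-categoricity) as $(V,s)^{\partial}$ for an irreducible \emph{curve} $V$ carrying a $\partial$-structure $s$ over $F$.

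First I would pass to the geometric picture of Remark 2.14, taking the base field to be the function field of a curve, so $F=\mathbb{C}(t)$: here $(V,s)$ is the generic fibre of a fibration $\pi\colon\mathcal{V}\to S$ of a complex surface over $S\subseteq\mathbb{C}$, and $s$ is a regular vector field on $\mathcal{V}$ lifting $d/dt$, that is, a one-dimensional algebraic foliation $\mathcal{F}$ whose leaves are the local analytic solutions of $dy/dt=s(y,t)$. The key dictionary is that a point $a\in acl(F)\cap X$ is an algebraic-function solution $a=a(t)$, whose Zariski closure in $\mathcal{V}$ is an irreducible invariant algebraic curve $C_{a}$ dominating $S$, with distinct algebraic solutions giving distinct curves. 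Thus $acl(F)\cap X$ is infinite precisely when $\mathcal{F}$ has infinitely many irreducible invariant algebraic curves.

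Now I would invoke Jouanolou's theorem (in its form for foliations on projective surfaces, applied to a smooth projective compactification $\bar{\mathcal{V}}$ of $\mathcal{V}$ and the extended foliation): if $\mathcal{F}$ has infinitely many irreducible invariant algebraic curves, then $\mathcal{F}$ admits a nonconstant rational first integral, i.e. a dominant rational map $R\colon\mathcal{V}\to\mathbb{P}^{1}$ constant along the leaves. Translating back into $\mathcal{U}$: for generic $y\in X$ the element $R(t,y)$ satisfies $\partial\bigl(R(t,y)\bigr)=0$, so lies in $\mathbb{C}$, and $y\mapsto R(t,y)$ is an $F$-definable, finite-to-one map from $X$ onto a cofinite subset of $\mathbb{C}$ (the fibres of $R$ being the one-dimensional leaves). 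By Definition 2.6(ii) this witnesses nonorthogonality of $X$ and $\mathbb{C}$, contradicting orthogonality to the constants. Hence $acl(F)\cap X$ is finite, so $X$ is $\omega$-categorical, and the final clause follows from Lemma 2.23.

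The main obstacle is exactly the geometric input and its scope. Jouanolou's theorem is a statement about surfaces, so the delicate point is the reduction of an order-$1$ set defined over an arbitrary finitely generated $F$---whose associated total space may have a base of dimension greater than $1$---to the surface situation; I expect to handle this by restricting the foliation to a generic one-dimensional slice of the base (a Bertini-type argument), and checking that both ``infinitely many invariant curves'' and ``existence of a first integral'' survive the restriction. A secondary technical point is the passage to a good projective model $\bar{\mathcal{V}}$: verifying that the invariant curves arising from algebraic solutions remain distinct and invariant after compactification, and that the first integral produced on $\bar{\mathcal{V}}$ restricts to a genuine nonorthogonality witness rather than collapsing to a constant on $X$.
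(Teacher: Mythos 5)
The paper does not actually prove this statement: it is quoted as a theorem of Hrushovski (the unpublished manuscript \cite{Hrushovski-Jouanalou}, with an exposition in \cite{Pillay-Fields}), so there is no internal proof to compare against. Your proposal correctly identifies the relevant circle of ideas --- the result is indeed a generalization of Jouanolou's theorem, and your argument is essentially right in the one case it genuinely covers, namely parameter fields of transcendence degree one over $\mathbb{C}$, where the total space is a surface, algebraic points correspond to invariant algebraic curves, and the classical Jouanolou theorem produces a rational first integral witnessing nonorthogonality to the constants.

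There are, however, two genuine gaps. First, the opening reduction is wrong: Lemma 2.21 requires $acl(b)\cap X$ to be finite for \emph{every} finite tuple $b$ over which $X$ is defined, not for one fixed $b$; Lemma 2.20 does not rescue a single-tuple check, since $\omega$-categoricity must in particular control $b$ extended by further points of $X$, and these extensions raise the transcendence degree of the parameter field without bound. Second, and this is the heart of the matter, once the parameter field $F$ has transcendence degree $m>1$ over $\mathbb{C}$, the geometric model of $X$ over $F$ is a one-dimensional foliation on an $(m+1)$-dimensional total space, and the points of $X$ algebraic over $F$ correspond to invariant \emph{hypersurfaces} of that foliation. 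Jouanolou's theorem (and Ghys's extension) concerns codimension-one foliations; it does not apply to a foliation by curves in dimension $\geq 3$. Your proposed Bertini-type fix fails at exactly this point: a generic one-dimensional slice of the base is not invariant under the derivation, so the vector field does not restrict to the preimage of the slice and there is no induced foliation on the resulting surface to which the classical theorem could be applied. This is precisely why the result is a genuine \emph{generalization} of Jouanolou rather than a corollary of it; the known proofs (see \cite{Pillay-Fields}) work directly over an arbitrary differential base field, for instance via a Darboux-type argument with the cofactors $D(P)=G\cdot P$ of the defining polynomials of invariant hypersurfaces (cf.\ Corollary 2.18), rather than by slicing down to the surface case.
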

 
 It has been conjectured in a loose sense that all geometrically trivial strongly minimal sets in $DCF_{0}$ are $\omega$-categorical. The conjecture stated in the introduction that for distinct solutions $y_{1},..,y_{n}$ of a generic Painlev\'e equation, $y_{1},y_{1}',..,y_{n},y_{n}'$ are algebraically independent over ${\mathbb C}(t)$ is a strengthening (of course for the special case of generic Painlev\'e equations, which {\em are} strongly minimal as we see in the next section).

 \subsection{Definability of the $A\to A^{\sharp}$ functor} 
 
 Here our model-theoretic framework is the big model ${\mathcal U}$ of $DCF_{0}$. For the proof of our main result, we will need the following statement:
 \begin{lem} Let $\phi(x,y)$ be a  formula in the language of rings ($+, -, \cdot, 0, 1$), such that for each $b$, $\phi(x,b)$, if consistent, defines an abelian variety $A_{b}$. Then there is a formula $\psi(x,y)$ in our language $L$ of differential rings, such that for each $b$, $\psi(x,b)$ defines $(A_{b})^{\sharp}$. 
 \end{lem}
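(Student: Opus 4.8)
The plan is to produce $\psi$ by making \emph{uniform} the standard construction of $A^\sharp$ through the universal vectorial extension, and then to exploit the principle that a uniquely determined object in a definable family is itself uniformly definable. The first step is to extract the finiteness that drives everything. Since $\phi$ is a single formula, the set of values taken by $\dim A_b$ is finite, so there is $g_0$ with $\dim A_b \le g_0$ for all relevant $b$; by Fact 2.4(ii) this already yields a uniform bound $\operatorname{order}((A_b)^\sharp) \le 2g_0$, but more importantly it confines all of the algebraic--geometric data below to bounded dimension and degree, which is exactly what upgrades pointwise definability to uniform definability.

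Next I would recall the construction to be made uniform. For each $b$ let $0 \to \hat W_b \to \hat A_b \xrightarrow{\pi_b} A_b \to 0$ be the universal vectorial extension, with $\hat W_b \cong \mathbb{G}_a^{\dim A_b}$. By Buium, $\hat A_b$ carries a canonical structure of algebraic $\partial$-group, that is, a homomorphic section $s_b$ of the projection $T_\partial(\hat A_b) \to \hat A_b$ in the notation of \S2.2. This section is \emph{unique}: the difference of two homomorphic sections is a group homomorphism $\hat A_b \to \operatorname{Lie}(\hat A_b) \cong \mathbb{G}_a^{2\dim A_b}$, and every homomorphism $\hat A_b \to \mathbb{G}_a$ vanishes, which is precisely the universal property of the vectorial extension together with the fact that an abelian variety admits no nonconstant morphism to $\mathbb{G}_a$. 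One then has the identification $(A_b)^\sharp = \pi_b\big((\hat A_b, s_b)^\partial\big)$ (see \cite{Buium}, \cite{Marker}), so it suffices to produce the data of $\hat A_b$, $\pi_b$ and $s_b$ uniformly in $b$.

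The definability argument then runs as follows. Because $\dim A_b$ is bounded, the universal vectorial extension, its group law, the projection $\pi_b$, and the shifted tangent bundle $T_\partial(\hat A_b)$ are all cut out by equations of bounded degree formed canonically from the equations defining $A_b$, and hence constitute families uniformly definable in $b$ (in the ring language, respectively in $L$). A homomorphic section of $T_\partial(\hat A_b) \to \hat A_b$ is a morphism of bounded degree, so it is coded by a finite tuple of parameters $c$; the set $\operatorname{Sec}_b$ of tuples coding such homomorphic sections is uniformly definable, and by the uniqueness above $\operatorname{Sec}_b = \{c_b\}$ is a singleton for every $b$. Being a uniformly definable family of singletons, the assignment $b \mapsto c_b$ is definable, whence $\{(a,b): \partial a = s_{c_b}(a)\} = \{(a,b): a \in (\hat A_b, s_b)^\partial\}$ is uniformly definable. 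Writing $y$ for a value of the parameter variable, the formula
\[
\psi(x,y)\ :=\ \exists a\,\big(\, a \in (\hat A_y, s_y)^\partial \ \wedge\ \pi_y(a) = x \,\big)
\]
then defines $(A_b)^\sharp$ uniformly, as required.

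The main obstacle is the middle step: turning the \emph{canonical but a priori only pointwise} constructions --- the universal vectorial extension and, above all, its distinguished $\partial$-group structure --- into genuinely uniform definable families. The bound on $\dim A_b$ is the key lever, since it keeps everything at bounded complexity and lets the ``unique object in a definable family is definable'' principle apply to a finite coding tuple $c_b$ rather than to an infinite object such as a section or a subgroup. One must also verify that the identification $(A_b)^\sharp = \pi_b((\hat A_b, s_b)^\partial)$ survives with the uniform data, but this is a formal consequence of the pointwise statement once the families and the evaluation maps are in place.
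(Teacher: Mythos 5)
Your overall skeleton matches the paper's: both proofs go through the universal vectorial extension $\pi_b:\widetilde{A_b}\to A_b$, the \emph{unique} regular homomorphic section $s_b:\widetilde{A_b}\to T_{\partial}(\widetilde{A_b})$, and the identification $(A_b)^{\sharp}=\pi_b((\widetilde{A_b},s_b)^{\partial})$ (the paper's Facts 2.26--2.28). The difference, and the gap, is in how you uniformize the section. You claim that because $\dim A_b$ is bounded, the section $s_b$ is ``a morphism of bounded degree'' and hence coded by a tuple in a uniformly definable set $\operatorname{Sec}_b$, to which you then apply the definable-singletons principle. But the bound on $\dim A_b$ does not by itself bound the degree of $s_b$: a priori the complexity of the unique section could grow without bound as $b$ varies, and without a uniform degree bound the set $\operatorname{Sec}_b$ is not a definable family in the first place, so the singleton argument cannot get started. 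This is precisely the point the lemma is about, and it needs an actual argument: either a compactness argument (the partial type asserting ``$\phi(x,y)$ defines an abelian variety and there is no homomorphic section of degree $\le d$'' for all $d$ is inconsistent, so finitely many degrees suffice), or, as the paper does, induction on the Morley rank of the parameter formula $\theta(y)$ --- take a generic $b$, observe that by uniqueness $s_b$ is definable over $b$ by some fixed formula $\gamma(z,w,b)$, let $\theta'(y)$ say that $\gamma(z,w,y)$ defines a homomorphic section, and apply the induction hypothesis to $\theta\wedge\neg\theta'$, which has strictly smaller rank.

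A second, smaller point: you treat the uniform definability of $b\mapsto\widetilde{A_b}$ (together with $\pi_b$) as an automatic consequence of bounded dimension. The paper isolates this as a separate ingredient (Fact 2.28, i.e.\ Lemma 3.8 of Hrushovski--Itai) rather than as something that follows formally from the equations of $A_b$ having bounded degree; it deserves at least a citation or an argument of the same compactness/genericity type as above. Once both uniformizations are in place, the rest of your proof --- the uniqueness of the section via the nonexistence of nonconstant homomorphisms from an abelian variety to $\mathbb{G}_a$, and the final formula $\psi(x,y)$ expressing membership in $\pi_y((\widetilde{A_y},s_y)^{\partial})$ --- is correct and agrees with the paper.
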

 
 So here $x$, $y$ are tuples of variables, and strictly speaking $\phi(x,y)$ is a pair of formulas, one for the underlying set (subset of a suitable projective space for example), and one for the graph of the group operation. The above result is implicit in Section 3.2 of \cite{Hrushovski-Itai}, and may even be needed for key results in that paper, but as there is neither an explicit statement or proof of Lemma 2.25 in \cite{Hrushovski-Itai} we take the opportunity to give a proof here. \\
 
 The first ingredient is a ``geometric" account of the construction of $A^{\sharp}$, appearing in many places including \cite{Hrushovski-Itai} and 
\cite{Marker-Quaderni} (but possibly originating with Buium). We summarize the situation. Let $A$ be an abelian variety over ${\mathcal U}$. Then $A$ has a ``universal vectorial 
extension" $\pi:{\tilde A} \to A$, that is $\tilde A$ is a commutative algebraic group which is an extension of $A$ by a vector group (power of the 
additive group) and for any other such extension $f:B\to A$, there is a unique homomorphism $g:{\tilde A}\to B$ such that everything commutes. (Here 
we work in the category of algebraic groups.) By functoriality of $T_{\partial}(-)$, for any algebraic group $G$ over ${\mathcal U}$, 
$T_{\partial}(G) \to G$ is a surjective homomorphism of algebraic groups, and moreover if $h:H\to G$ is a homomorphism of algebraic groups we obtain 
$T_{\partial}(h): T_{\partial}(H) \to T_{\partial}(G)$. In particular, taking $B$ to be $T_{\partial}({\tilde A})$, and $f:B \to A$ the composition of
$T_{\partial}(A)\to A$ with $T(\pi):T_{\partial}(\tilde A) \to T_{\partial}(A)$, we obtain a regular homomorphism $g:{\tilde A} \to T_{\partial}(A)$ which has to be a section of the canonical $T_{\partial}({\tilde A}) \to {\tilde A}$. We note that:
\begin{fct}
(i) $g:{\tilde A} \to T_{\partial}({\tilde A})$ is the {\em unique} regular homomorphic section of  $T_{\partial}({\tilde A}) \to \tilde A$, and, just for the record
\newline
(ii) $({\tilde A},g)$ is an algebraic $\partial$-group, in the obvious sense.
\end{fct}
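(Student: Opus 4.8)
The plan is to extract both parts from the universal property of $\tilde A$ together with the functoriality of $T_\partial(-)$ recorded above; the only genuinely geometric input is that the composite $f\colon T_\partial(\tilde A)\to A$ is itself a vectorial extension of $A$, so that the universal property can be applied to it. To set this up, write $p\colon T_\partial(\tilde A)\to\tilde A$ and $q\colon T_\partial(A)\to A$ for the two canonical projections. Naturality of the projection $T_\partial(G)\to G$ in $G$, applied to $\pi\colon\tilde A\to A$, gives the commuting square $\pi\circ p = q\circ T_\partial(\pi)$; since the right-hand side is by definition $f$, I obtain the relation $f=\pi\circ p$, which I will use repeatedly.

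First I would verify that $f\colon T_\partial(\tilde A)\to A$ is a vectorial extension. Surjectivity is immediate, as $q$ and $T_\partial(\pi)$ are surjective homomorphisms of algebraic groups. For the kernel, apply the exact functor $T_\partial(-)$ to $0\to V\to\tilde A\xrightarrow{\pi}A\to 0$ (with $V$ a vector group) to get the exact sequence $0\to T_\partial(V)\to T_\partial(\tilde A)\xrightarrow{T_\partial(\pi)}T_\partial(A)\to 0$, and note that $T_\partial(V)$ is again a vector group (the shifted tangent bundle of a power of $\mathbb{G}_a$ is a power of $\mathbb{G}_a$). Since $\ker f=(T_\partial(\pi))^{-1}(\ker q)$ and $\ker q\cong\operatorname{Lie}(A)$ is a vector group, $\ker f$ is an extension of one vector group by another, hence in characteristic $0$ itself a vector group. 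Thus $f$ is a vectorial extension of $A$, and the universal property yields a \emph{unique} homomorphism $g\colon\tilde A\to T_\partial(\tilde A)$ with $f\circ g=\pi$.

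For (i), I would first check that this $g$ is a section: using $f=\pi\circ p$ one computes $\pi\circ(p\circ g)=f\circ g=\pi$, so $p\circ g$ is an endomorphism of $\tilde A$ over $A$. The universal property applied to the extension $\pi\colon\tilde A\to A$ itself guarantees a \emph{unique} homomorphism $\tilde A\to\tilde A$ over $A$, and $\operatorname{id}_{\tilde A}$ is one such; hence $p\circ g=\operatorname{id}_{\tilde A}$, i.e.\ $g$ is a regular homomorphic section. For uniqueness, suppose $s$ is any homomorphic section of $p$; then $f\circ s=\pi\circ p\circ s=\pi$, so $s$ is a lift of $\pi$ to the vectorial extension $f$, and the uniqueness clause of the universal property forces $s=g$. (Note this argument avoids having to invoke $\operatorname{Hom}(\tilde A,\mathbb{G}_a)=0$ directly, although that vanishing gives an alternative route.)

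Part (ii) is then essentially a reformulation: a regular homomorphic section $g$ of $T_\partial(\tilde A)\to\tilde A$ is precisely the datum of a derivation $D_{\tilde A}$ extending $\partial$ for which the group operations of $\tilde A$ are morphisms of algebraic $\partial$-varieties, the homomorphism property of $g$ being exactly compatibility with multiplication; so $(\tilde A,g)$ is an algebraic $\partial$-group in the obvious sense. The one non-formal step in all of this is the verification that $f$ is a vectorial extension, which relies on the exactness of $T_\partial(-)$ applied to $0\to V\to\tilde A\to A\to 0$ and the identification of $T_\partial(V)$ and $\ker q$ as vector groups; once that is in hand, everything else is bookkeeping with the universal property.
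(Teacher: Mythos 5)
Your argument is correct and follows exactly the route the paper has in mind: the paper states Fact 2.26 without proof (citing Hrushovski--Itai, Marker, and Buium), but the preceding paragraph sketches precisely the construction you carry out, namely applying the universal property of $\tilde A$ to the vectorial extension $f = \pi\circ p : T_{\partial}(\tilde A)\to A$. Your proposal simply supplies the details the paper leaves to the references --- checking that $\ker f$ is a vector group (commutative unipotent in characteristic $0$), that $p\circ g=\operatorname{id}_{\tilde A}$ by uniqueness of lifts over $\pi$ itself, and that uniqueness of the section follows from the uniqueness clause of the universal property --- and all of these steps are sound.
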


From subsection 2.2, and Fact 2.26 (ii), we obtain $({\tilde A},g)^{\partial}$ which is now a finite-dimensional {\em differential algebraic group}, and the main point is:

\begin{fct} $A^{\sharp} = \pi(({\tilde A},g)^{\partial})$. 
\end{fct}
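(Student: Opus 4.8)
The plan is to prove the two inclusions $A^{\sharp}\subseteq \pi((\tilde A,g)^{\partial})$ and $\pi((\tilde A,g)^{\partial})\subseteq A^{\sharp}$ separately, after first recording that both sides are definable subgroups of $A$. Write $d\colon \tilde A\to L\tilde A$ for the map $\tilde P\mapsto \partial(\tilde P)-g(\tilde P)$, where $L\tilde A=\ker(T_{\partial}(\tilde A)\to \tilde A)$ is the vectorial Lie algebra of $\tilde A$; since $\tilde A$ is commutative and $g$ is a regular homomorphic section (Fact 2.26), $d$ is a differential-algebraic homomorphism whose kernel is exactly $(\tilde A,g)^{\partial}$. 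Hence $(\tilde A,g)^{\partial}$ is a definable subgroup of $\tilde A$, and its image $B:=\pi((\tilde A,g)^{\partial})$, being the image of a definable subgroup under the algebraic homomorphism $\pi$, is a definable subgroup of $A$.

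For the inclusion $A^{\sharp}\subseteq B$ I would first show that $(\tilde A,g)^{\partial}$ is Zariski-dense in $\tilde A$. Let $Z$ be its Zariski closure. By Fact 2.16, $Z$ is an algebraic $\partial$-subvariety of $(\tilde A,g)$ and $(\tilde A,g)^{\partial}=(Z,g|Z)^{\partial}$; by the order computation of subsection 2.2 this gives $\dim Z\geq \mathrm{order}((\tilde A,g)^{\partial})=\dim \tilde A$, and since $\tilde A$ is irreducible this forces $Z=\tilde A$. As $\pi$ is a surjective morphism, $B$ is then Zariski-dense in $A=\pi(\tilde A)$ (the preimage of $\overline{B}$ is a closed set containing $(\tilde A,g)^{\partial}$, hence all of $\tilde A$). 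By Fact 2.4(i), $A^{\sharp}$ is the smallest Zariski-dense definable subgroup, so $A^{\sharp}\subseteq B$.

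The reverse inclusion $B\subseteq A^{\sharp}$ is the substantial direction, and I expect it to be the main obstacle. Since $A^{\sharp}$ is the \emph{unique} smallest Zariski-dense definable subgroup, it suffices to prove that $B$ is minimal, i.e. contains no proper definable subgroup that is still Zariski-dense in $A$; equality then follows from $A^{\sharp}\subseteq B$ together with the fact that $A^{\sharp}$ is Zariski-dense in $A$ (hence in $B$). To obtain minimality I would exploit the universal property of $\pi\colon\tilde A\to A$ together with the uniqueness of $g$ (Fact 2.26(i)): any finite-dimensional Zariski-dense definable subgroup $H\leq A$ — in particular $H=A^{\sharp}$ — is, up to commensurability, the set of $\partial$-points of an algebraic $\partial$-group mapping to $A$, and universality forces $\tilde A$ to factor through it compatibly with $g$, so that $(\tilde A,g)^{\partial}$ maps into $H$ and therefore $B\subseteq H$.

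The genuinely delicate point, and the heart of the matter, is the behaviour of the kernel $V=\ker\pi$ under $g$: the intersection $V\cap (\tilde A,g)^{\partial}$, equivalently the extent to which $g$ preserves $V$, is controlled by the Kodaira--Spencer/Gauss--Manin data of $A$ and cannot be read off by formal manipulation (in the isotrivial case $g$ preserves $V$, while in general it does not, and this is exactly what makes $\mathrm{order}(B)$ range between $\dim A$ and $2\dim A$ as in Fact 2.4(ii)). I would therefore carry out this step by invoking the explicit geometric construction of $A^{\sharp}$ via the universal vectorial extension (Buium; Marker--Quaderni), using the model-theoretic facts above only to package the density and minimality into the clean equality $A^{\sharp}=\pi((\tilde A,g)^{\partial})$.
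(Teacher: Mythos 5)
The paper does not actually prove Fact 2.27: it is stated as a quoted fact, with the construction attributed to Buium and the references \cite{Hrushovski-Itai} and \cite{Marker-Quaderni}, so there is no in-paper argument to compare yours against. Measured on its own terms, your proposal proves one half and leaves the other half to the same literature. The inclusion $A^{\sharp}\subseteq\pi((\tilde A,g)^{\partial})$ is handled correctly and completely: the map $x\mapsto \partial x - g(x)$ into the vector group $\ker(T_{\partial}(\tilde A)\to\tilde A)$ is a homomorphism because $\tilde A$ is commutative and both $x\mapsto(x,\partial x)$ and $g$ are homomorphic sections, so $(\tilde A,g)^{\partial}$ is a definable subgroup; the order count $\mathrm{order}((\tilde A,g)^{\partial})=\dim\tilde A$ from subsection 2.2 together with Fact 2.16 forces Zariski density in $\tilde A$, hence $B=\pi((\tilde A,g)^{\partial})$ is a Zariski-dense definable subgroup of $A$, and Fact 2.5(i) gives $A^{\sharp}\subseteq B$. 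This is exactly the standard argument.

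The reverse inclusion is where the theorem's content lies, and your sketch does not close it. The reduction to ``$B$ contains no proper Zariski-dense definable subgroup'' is a valid reformulation, but the mechanism you propose for it --- that any Zariski-dense finite-dimensional definable $H\leq A$ is the image of the $\partial$-points of some algebraic $\partial$-group over $A$, and that ``universality forces $\tilde A$ to factor through it compatibly with $g$'' --- does not follow from the universal property as stated in the paper. That property lives in the category of extensions of $A$ by \emph{vector groups}; to invoke it you must first show that $H$ arises (up to the relevant commensurability) from a vectorial extension $h\colon G\to A$ carrying an algebraic $\partial$-group structure $s_G$, and then separately argue that the induced algebraic homomorphism $\tilde A\to G$ intertwines $g$ with $s_G$ --- which needs the uniqueness statement of Fact 2.26(i) applied to $G$, not to $\tilde A$. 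Establishing that every finite-dimensional Zariski-dense definable subgroup of $A$ has this shape is essentially Buium's theorem, i.e.\ it is the substance of Fact 2.27 itself, so as written your argument for $B\subseteq A^{\sharp}$ is circular-adjacent rather than a proof. You do flag this honestly and defer to Buium and Marker's Manin-kernels account, which is defensible (and is all the paper does), but it means the proposal is not a self-contained proof of the hard direction. A more standard way to finish, if you want to avoid the citation: show $(\tilde A,g)^{\partial}$ contains $\mathrm{Tor}(\tilde A)$ (torsion dies in the torsion-free vector group $\ker(T_{\partial}(\tilde A)\to\tilde A)$), deduce $\mathrm{Tor}(A)\subseteq B$, and then identify both $B$ and $A^{\sharp}$ with the Kolchin closure of $\mathrm{Tor}(A)$; the last identification is again Buium's theorem, so some appeal to the structure theory of definable subgroups of abelian varieties is unavoidable.
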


\vspace{2mm}
\noindent
The second ingredient is Lemma 3.8 of \cite{Hrushovski-Itai}, which we interpret as:
\begin{fct} The map which takes $A \to {\tilde A}$ is definable in $ACF$. Namely let $\phi(x,y)$, $\theta(y)$ be  formulas in the language of rings such that for all $b$ satisfying $\theta(y)$ (in some ambient algebraically closed field of characteristic $0$), $\phi(x,b)$ defines an abelian variety $A_{b}$. Then there is a formula $\chi(z,y)$ in the language of rings such that for all $b$ satisfying $\theta$, $\chi(z,b)$ defines $\widetilde {A_{b}}$ (and its canonical surjection $\pi_{b}$ to $A_{b}$). 
\end{fct}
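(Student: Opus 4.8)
The plan is to split the statement into a classical geometric input about a \emph{single} abelian variety and a soft model-theoretic ``transfer to families'' argument, the latter carrying all the logical content. For a single abelian variety $A$ over a field $k$ of characteristic $0$ the universal vectorial extension $\pi:\tilde A\to A$ exists, is unique up to a unique isomorphism over $A$, and is defined over the field of definition of $A$; existence is already recalled in the paragraph preceding Fact 2.26, and I would quote uniqueness and descent from the classical theory (Rosenlicht--Serre, with Mazur--Messing for the functorial treatment). The point is that the content of the present statement is \emph{entirely} the uniformity, not the existence: once I exhibit a first-order property that pins down $\tilde A$ up to canonical isomorphism, the family $\{\widetilde{A_b}\}$ will be determined. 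After refining $\theta$ into finitely many pieces on which $g=\dim A_b$ is constant (dimension being definable in $ACF$), I may assume $g$ fixed, so $\dim\widetilde{A_b}=2g$ is bounded along the family.

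The decisive step is to replace the universal property, which is a priori a quantifier over all algebraic-group extensions $B\to A$ by vector groups and hence \emph{not} a first-order quantifier, by an intrinsic characterisation ranging only over a definable set. In characteristic $0$ a connected commutative algebraic group is a vector group exactly when it is unipotent, so ``being a vector group of dimension $g$'' is first-order. I would then characterise $\tilde A$ as the unique extension
$$0\longrightarrow V\longrightarrow \tilde A\stackrel{\pi}{\longrightarrow} A\longrightarrow 0$$
with $V$ a vector group of dimension $g$ such that for every nonzero $\lambda\in\mathrm{Hom}(V,\mathbb{G}_a)$ the pushout extension $\lambda_*[\tilde A]$ of $A$ by $\mathbb{G}_a$, concretely $(\tilde A\times\mathbb{G}_a)/\{(v,-\lambda(v)):v\in V\}$, is non-split. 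Since $\mathrm{Hom}(V,\mathbb{G}_a)\cong V^*$ and $\mathrm{Ext}^1(A,\mathbb{G}_a)\cong H^1(A,\mathcal O_A)$ are both $g$-dimensional, this non-splitting condition says precisely that $V^*\to\mathrm{Ext}^1(A,\mathbb{G}_a)$ is injective, hence an isomorphism, which is the universal property. Crucially it quantifies only over $\lambda$ in the definable group $\mathrm{Hom}(V,\mathbb{G}_a)$ and over homomorphic sections of a fixed extension by $\mathbb{G}_a$, all definable, so the whole characterisation is a single formula $\Psi(z,y)$ in the language of rings, uniform in $b$.

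With this in hand the transfer is routine. The set $\Phi$ of pairs $(b,c)$, where $c$ codes bounded-degree equations of a subgroup of a fixed ambient space (into which a bounded-degree extension of $A_b$ by $\mathbb{G}_a^{\,g}$ embeds) together with the graph of its map to $A_b$, and such that this datum satisfies $\Psi$, is definable. By the classical existence result $\Phi$ projects onto $\{b:\theta(b)\}$, and by uniqueness its fibres form single isomorphism classes; invoking elimination of imaginaries for $ACF$ (or simply fixing the embedded model produced by one uniform construction) I would select a representative $c(b)$ definably in $b$ and set $\chi(z,y)$ to assert that $z$ lies in the group coded by $c(y)$. This $\chi$ defines $\widetilde{A_b}$ together with $\pi_b$ for all $b\models\theta$, as required.

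I expect the main obstacle to be exactly the reduction in the second paragraph: eliminating the second-order flavour of the universal property. The non-splitting reformulation is what makes it first-order, but one must verify carefully that injectivity of $V^*\to\mathrm{Ext}^1(A,\mathbb{G}_a)$ forces surjectivity (the dimension count $\dim V^*=g=\dim H^1(A,\mathcal O_A)$, valid in characteristic $0$) and that the ambient embeddings and group laws can be taken of bounded complexity along the family, so that ``$\widetilde{A_b}$ sits in a fixed ambient space'' is legitimate; the latter is a Noetherian uniformity point for families of fixed dimension and bounded degree. An alternative I would keep in reserve, bypassing the pushout bookkeeping, is the Mazur--Messing description of $\tilde A$ as the moduli of degree-zero line bundles on the dual $A^\vee$ equipped with an integrable connection: this realises $\tilde A$ directly as a bounded-dimensional algebraic group assembled from $\mathrm{Pic}^0(A^\vee)=A$ and the space of invariant $1$-forms, both uniformly definable, avoiding the universal quantifier altogether.
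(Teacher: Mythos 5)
The paper itself contains no proof of this statement: Fact 2.28 is precisely how the authors interpret Lemma 3.8 of \cite{Hrushovski-Itai}, which they cite without reproducing an argument. So your proposal is necessarily a different route, and its central idea is sound: replacing the universal property (a priori a quantifier over all vectorial extensions $B\to A$) by the intrinsic condition that every pushout $\lambda_*[\tilde A]$, for $0\neq\lambda\in\mathrm{Hom}(V,\mathbb{G}_a)$, is non-split is a correct first-order-isation, since injectivity of $V^*\to\mathrm{Ext}^1(A,\mathbb{G}_a)$ together with the dimension count $\dim V^*=g=\dim H^1(A,\mathcal{O}_A)$ (valid in characteristic $0$) does recover the universal property. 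It is worth comparing with the companion statement the paper \emph{does} prove, namely Lemma 2.25 (uniform definability of $A\mapsto A^{\sharp}$ given Fact 2.28): there the authors use a softer device --- pass to a generic point of a Morley-degree-one piece of $\theta$, note that by uniqueness the canonical object is defined over that point by some fixed formula, let $\theta'$ assert only a verifiable first-order fragment (``$\gamma$ defines a homomorphic section''), and induct on Morley rank. That trick works there because uniqueness of the regular homomorphic section makes the verifiable fragment self-certifying; it would \emph{not} work as stated for $\widetilde{A}$, since ``is an extension of $A_b$ by a vector group of dimension $g$'' does not pin down $\widetilde{A_b}$ (the split extension also qualifies). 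So your instinct that the non-splitting characterisation is where the logical content lies is correct.

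The gap you flag yourself is the one serious gap: ``non-split'' negates an existential over homomorphic sections of a priori unbounded degree (and likewise you quantify over $\mathrm{Hom}(V,\mathbb{G}_a)$ without a uniform definable parametrisation along the family), so your $\Psi$ is not yet a single formula; and your appeal to ``the classical existence result'' to see that a fixed-degree $\Phi$ projects onto $\theta$ presupposes exactly the bound at issue. The hole can be closed. A splitting of an extension of $A_b$ by $\mathbb{G}_a$ is unique when it exists (two homomorphic sections differ by a homomorphism $A_b\to\mathbb{G}_a$, which is trivial), and in families the split locus is the zero locus of the extension class regarded as a section of the locally free rank-$g$ sheaf $R^1\pi_*\mathcal{O}$ on the base, hence Zariski closed and in particular definable; by saturation the ind-definable increasing union ``split by a section of degree at most $e$'' must then stabilise, yielding the uniform degree bound that makes $\Psi$ genuinely first-order. (Your reserve Mazur--Messing construction reaches the same end differently, by producing $\widetilde{A_b}$ through one uniform construction from $A_b$, which is closer in spirit to what an explicit proof of Lemma 3.8 of \cite{Hrushovski-Itai} would do.) With that bound in hand your finish is fine, though elimination of imaginaries is not needed in any essential way: compactness plus a decomposition of $\theta$ into finitely many constructible pieces suffices, since the Fact only asks for \emph{some} formula $\chi(z,y)$ defining a copy of $\widetilde{A_b}$ together with $\pi_b$, and $\widetilde{A_b}$ is in any case only determined up to canonical isomorphism.
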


\vspace{2mm}
\noindent
{\em Proof of Lemma 2.25}  
\newline
We prove the equivalent statement:
\newline
(*)  For any formula $\phi(x,y)$ in the language of rings and formula $\theta(y)$ in the language of differential rings such that for each $b\in {\mathcal U}$ satisfying $\theta(y)$, $\phi(x,b)$ defines an abelian variety $A_{b}$, then there is a formula $\psi(x,y)$ in the language of differential rings such that for each $b$ satisfying $\theta(y)$, $\psi(x,b)$ defines $(A_{b})^{\sharp}$.

\vspace{2mm}
\noindent
First let $\chi(z,y)$ be as in Fact 2.28. We prove (*) by induction on the Morley rank of $\theta(y)$. Suppose $RM(\theta(y)) = \alpha$. We may assume $\theta$ has Morley degree $1$. Let $b$ be a ``generic point" of $\psi(y)$ over $\emptyset$, namely $\models \neg\nu(b)$ for any formula $\nu(y)$ without parameters, of Morley rank $<\alpha$. Then $\chi(z,b)$ defines $\widetilde{A_{b}}$ and   $\pi_{b}: \widetilde{A_{b}}\to A_{b}$. Note that $T_{\partial}(\widetilde{A_{y}})$ and the canonical surjection $\lambda_{y}:T_{\partial}(\widetilde{A_{y}}) \to \widetilde{A_{y}}$ are also uniformly definable in $y$ in the differentially closed field: by formula $\eta(w,y)$ say (in the language of differential rings). 
Let $s: \widetilde{A_{b}} \to T_{\partial}(\widetilde{A_{b}})$ be the unique regular homomorphic section of $\lambda_{b}$ given by Fact 2.26. By uniqueness $s = s_{b}$ is definable (in the language of rings) over $b$, by a formula $\gamma(z,w,b)$ say. 
\newline
Now consider the formula $\theta'(y)$ which expresses:  $\theta(y)$ + ``$\gamma(z,w,y)$ defines a homomorphic section of the map $\lambda_{y}:T_{\partial}(\widetilde{A_{y}}) \to \widetilde{A_{y}}$". 
\newline
Note that  $\models \theta'(b)$. Moreover whenever $\models \theta'(c)$, then $\gamma(z,w,c)$ defines the {\em unique} regular homomorphic section, say $s_{c}: \widetilde{A_{c}} \to T_{\partial}(\widetilde{A_{c}})$. Hence by Fact 2.27, $(A_{c})^{\sharp}$ = $\pi_{c}((\widetilde{A_{c}},s_{c})^{\partial})$, and so is defined by a formula $\psi(x,c)$, where $\psi(x,y)$ does not depend on $c$. Hence (*) holds for $\theta'(y)$ in place of $\theta(y)$, but as $\theta'(y)$
is true of $b$, and $b$ is ``generic" for the Morley degree $1$ formula $\theta(y)$, the formula $\theta(y)\wedge\neg\theta'(y)$ has Morley rank $< \alpha$, and we can use induction.  This completes the proof of (*) and Lemma 2.25.

\section{Strong minimality and geometric triviality of the Painlev\'e equations}

For each of the six families of Painlev\'e equations we will prove strong minimality and geometric triviality for an equation with generic parameters, yielding the main result as stated in the abstract. For each of the families we will have to describe briefly relevant results by the ``Japanese school". 
Typically the equations are rewritten as (Hamiltonian) $\partial$-variety structures on ${\mathbb A}^{2}$, and a description of the transformations which preserve the family are given, culminating in a description of the complex parameters for which the Hamiltonian vector field satisfies Umemura's 
condition (J). We are interested only in the final conclusion. Recall that this condition (J) on the ``vector field" $D$ is: there is no $K \geq {\mathbb C}(t)$ and nonconstant polynomial $P(x,y)$ over $K$
such that $D(P) = GP$ for some polynomial $G(x,y)$ over $K$.

\subsection{The equation $P_{I}$.}  The equation is $y'' = 6y^{2} + t$. 

\begin{prop} The solution set of $P_{I}$ is strongly minimal and $\omega$-categorical (in fact disintegrated). 
\end{prop}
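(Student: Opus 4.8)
The plan is to prove strong minimality first, then deduce $\omega$-categoricity and disintegratedness. Strong minimality will follow from Corollary 2.18(a) once I invoke the relevant result of the Japanese school for $P_I$. Specifically, I rewrite $P_I$ as an algebraic $\partial$-variety structure $(V, D_V)$ on $V = {\mathbb A}^2$ via the standard Hamiltonian form: setting $x = y$ and $u = y'$, the second-order equation $y'' = 6y^2 + t$ becomes the system $D_V(x) = u$, $D_V(u) = 6x^2 + t$, so that $D_V$ is the derivation of ${\mathbb C}(t)[x,u]$ extending $d/dt$ determined by these values. By Corollary 2.18(a), $(V,D_V)^\partial$ is strongly minimal precisely when $D_V$ satisfies Umemura's condition (J): there is no $K \geq {\mathbb C}(t)$ and nonconstant polynomial $P(x,u)$ over $K$ with $D_V(P) = GP$ for some $G$. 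The content that condition (J) holds for $P_I$ is exactly the irreducibility result established by the Japanese school (Okamoto--Nishioka--Umemura--Watanabe, etc.), which I will quote rather than reprove.

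Once strong minimality is in hand, $\omega$-categoricity should come from the general order-one theory. Here I would appeal to Fact 2.24: a strongly minimal set of order $1$ which is orthogonal to the constants is automatically $\omega$-categorical (and hence geometrically trivial by Lemma 2.23). The solution set of $P_I$ has order $1$ in the sense relevant to Fact 2.24 after one identifies the correct notion, but more carefully I should verify orthogonality to ${\mathbb C}$, i.e. rule out case (i) of the trichotomy Theorem 2.7. Nonorthogonality to the constants would, after base change, force a kind of algebraic integrability; for $P_I$ this can be excluded either directly from condition (J)-type arguments or by invoking that $P_I$ has no algebraic solutions and the known fact (from the Japanese school) that $P_I$ is not ``solvable'' in the classical sense. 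So the structure of the argument is: strong minimality via condition (J), then orthogonality to ${\mathbb C}$, then Fact 2.24 delivers $\omega$-categoricity, and Lemma 2.23 (or directly the conclusion of Fact 2.24) gives geometric triviality.

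To upgrade geometric triviality to the stronger claim of being \emph{disintegrated} (``trivial'' in the strongest sense, where algebraic closure is degenerate on the whole set, not merely pairwise), I would argue that the binary geometric triviality together with $\omega$-categoricity and the absence of any definable group configuration forces full disintegration. The cleanest route is to note that for $P_I$ one has more than condition (J): there is no nontrivial definable relation giving rise to a ``pseudoplane'' or group, and since $A^\sharp$-type and field-type behaviour are excluded by the trichotomy, the remaining combinatorial geometry on the order-one strongly minimal set must be degenerate. I expect this to be essentially automatic from the order-one orthogonal-to-constants case, where Hrushovski's theorem (Fact 2.24) in fact yields disintegratedness, not just $\omega$-categoricity.

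The main obstacle I anticipate is verifying orthogonality to the constants cleanly, i.e. genuinely excluding case (i) of the trichotomy for $P_I$. Strong minimality via condition (J) is a direct quotation, but nonorthogonality to ${\mathbb C}$ is a subtler property than merely having no algebraic solutions: it concerns whether, after adding constant parameters and base-changing, the equation becomes equivalent to the constants. For $P_I$ this exclusion is presumably folklore or extractable from the same irreducibility literature, but pinning down exactly which statement of the Japanese school rules it out — as opposed to merely ruling out algebraic solutions — is where I would expect to spend the most care. Once that orthogonality is secured, everything else follows formally from Fact 2.24, Lemma 2.23, and the trichotomy theorem.
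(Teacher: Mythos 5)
Your first step (strong minimality) is fine in substance: you propose to get it from Corollary 2.18(a) and condition (J) for the Hamiltonian vector field of $P_I$, whereas the paper quotes the Kolchin--Nishioka--Umemura transcendence statement (Fact 3.2: a solution is either in $K^{alg}$ or of transcendence degree $2$ over $K$) and applies Remark 2.3 directly. Either route is a legitimate quotation of the Japanese school. The problems begin afterwards.

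The appeal to Fact 2.24 is a genuine gap. That fact (Hrushovski's generalization of Jouanolou) applies only to strongly minimal sets of \emph{order $1$}, and the solution set of $P_I$ has order $2$: by Fact 3.2 a generic solution $y$ satisfies $tr.deg({\mathbb C}(t)\langle y\rangle/{\mathbb C}(t))=2$. There is no ``correct notion'' under which it becomes order $1$, so neither $\omega$-categoricity nor disintegratedness can be extracted this way. Relatedly, the obstacle you single out --- orthogonality to the constants --- is actually a non-issue: by Remark 2.7 nonorthogonal strongly minimal sets have equal order, and ${\mathbb C}$ has order $1$, so any order-$2$ strongly minimal set is automatically orthogonal to the constants. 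The case you never address, and which is the real difficulty for order-$2$ sets, is case (ii) of the trichotomy: nonorthogonality to $E^{\sharp}$ for an elliptic curve $E$ not descending to ${\mathbb C}$. For $P_{II}$--$P_{VI}$ the paper rules this out by varying the generic parameter (Proposition 3.6 and its analogues), but $P_I$ has no parameters, so that trick is unavailable. The paper instead quotes Nishioka's theorem (Fact 3.3): distinct solutions of $P_I$ not in $K^{alg}$ are, together with their derivatives, algebraically independent over $K$. Combined with Painlev\'e's result that there are no algebraic solutions, this immediately gives disintegratedness (there are simply no dependencies among distinct non-algebraic solutions) and $\omega$-categoricity via Lemma 2.21, since $acl(t,y_1,\dots,y_k)\cap X=\{y_1,\dots,y_k\}$. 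Without some such input beyond strong minimality, your argument cannot close; indeed, if Fact 2.24 applied here, the entire machinery of Section 2.4 and the trichotomy-based proofs for $P_{II}$--$P_{VI}$ would be unnecessary.
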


Strong minimality is given by the following (see Remark 2.3):

\begin{fct} Let $K < L$ be differential fields containing ${\mathbb C}(t)$. Let $y\in L$ be a solution of $y'' = 6y^{2} + t$. Then either $y\in K^{alg}$ or $tr.deg.(K\langle y \rangle/K) = 2$.
\end{fct}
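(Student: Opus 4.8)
The statement of Fact 3.2 is precisely the transcendence-degree dichotomy that, by Remark 2.3, characterizes strong minimality of the solution set $Y\subseteq{\mathcal U}$ of $y''=6y^2+t$ (a definable set over ${\mathbb C}(t)$), save for the fact that Remark 2.3 is phrased only for differential fields $F$ finitely generated over ${\mathbb C}(t)$, whereas here $K$ is arbitrary. So my plan has two parts: first establish strong minimality of $Y$ through the algebraic $\partial$-variety picture together with Umemura's condition (J), and then reduce the dichotomy over an arbitrary $K$ to the finitely generated case.

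For strong minimality I would recast $P_I$ as an algebraic $\partial$-variety on $V={\mathbb A}^2$ over $F={\mathbb C}(t)$: writing the coordinates as $x,u$, let $D=D_V$ be the derivation extending $d/dt$ with $D(x)=u$ and $D(u)=6x^2+t$. The map $y\mapsto(y,y')$ identifies the solution set $Y$ with $(V,D)^{\partial}=\{a\in V({\mathcal U}):\partial(a)=s(a)\}$. By Corollary 2.18(a), $(V,D)^{\partial}$ is strongly minimal if and only if condition (J) holds, i.e. there is no $K\geq{\mathbb C}(t)$ and nonconstant $P(x,u)$ over $K$ with $D(P)=GP$ for some $G(x,u)$ over $K$. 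I would then quote the results of the Japanese school (as recorded in this subsection) establishing that the Hamiltonian vector field of $P_I$ satisfies condition (J); this is the nonexistence of a Darboux polynomial (equivalently, of an invariant algebraic curve) for $D$. Combining this input with Corollary 2.18 yields that $Y$ is strongly minimal.

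It then remains to pass from finitely generated $F$ to the arbitrary $K$ of the statement. Suppose, for contradiction, that $y\notin K^{alg}$ yet $tr.deg(K\langle y\rangle/K)<2$. Since $K\langle y\rangle=K(y,y')$ has transcendence degree at most $2$ over $K$, and $y\notin K^{alg}$ forces this degree to be at least $1$, we must have $tr.deg(K\langle y\rangle/K)=1$; equivalently $y,y'$ satisfy a nontrivial polynomial relation over $K$. That relation has finitely many coefficients, so it is already defined over some differential subfield $F$ of $K$ finitely generated over ${\mathbb C}(t)$. Then $y\notin F^{alg}$ (because $F^{alg}\subseteq K^{alg}$), while the relation shows $tr.deg(F\langle y\rangle/F)\leq 1$, hence $=1$. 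Since $F\langle y\rangle$ is finitely generated over ${\mathbb C}(t)$ it embeds in ${\mathcal U}$ over ${\mathbb C}(t)$, and applying Remark 2.3 to this copy of $F$ and $y$ gives $tr.deg(F\langle y\rangle/F)\in\{0,2\}$, a contradiction. This completes the dichotomy; $\omega$-categoricity and disintegratedness (claimed in Proposition 3.1) follow separately, e.g. via Fact 2.24 since $Y$ has order $1$ and is orthogonal to the constants.

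The genuinely hard part is entirely packaged into condition (J): the analytic and algebraic argument that $P_I$ admits no invariant algebraic curve is the deep contribution of the Japanese school, which I treat as a black box and merely quote. By contrast, the $\partial$-variety reformulation via Corollary 2.18 and the finitary reduction in the last paragraph are routine model-theoretic bookkeeping, requiring no analysis of the equation itself.
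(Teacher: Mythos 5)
The paper does not prove Fact 3.2 at all: it is quoted from the literature (attributed to Kolchin in \cite{Marker}, to Kolchin--Kovacic in \cite{Umemura1}, Lemma 0, and rediscovered by Nishioka), and the paper's logical direction is the reverse of yours --- it uses the quoted dichotomy, via Remark 2.3, to \emph{deduce} strong minimality of $P_I$. Your proposal instead derives the dichotomy from strong minimality, which you in turn derive from condition (J) for the $P_I$ vector field, and here is the gap: you propose to ``quote the results of the Japanese school (as recorded in this subsection) establishing that the Hamiltonian vector field of $P_I$ satisfies condition (J),'' but no such statement is recorded in subsection 3.1 or anywhere else in the paper. Unlike the cases $P_{II}$--$P_{VI}$, where the quoted input genuinely is a condition-(J) statement (Facts 3.4, 3.7, 3.10, 3.13, 3.16), the only quoted input for $P_I$ is Fact 3.2 itself; the Hamiltonian $\partial$-variety discussion for $P_I$ is explicitly flagged as ``not required.'' Since, by your own chain of equivalences (Corollary 2.18 plus the finitary descent), condition (J) for $D$ is equivalent to Fact 3.2, your argument reduces the statement to an equivalent reformulation and then cites that reformulation back to the statement's own source. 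The actual content --- Kolchin's direct differential-algebraic argument, equivalently the nonexistence of an invariant algebraic curve for $D$ over any differential extension of ${\mathbb C}(t)$ --- is neither proved nor independently sourced, so the proposal is circular as it stands.

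That said, the surrounding bookkeeping is correct and worth keeping. The identification of the solution set with $({\mathbb A}^2,D)^{\partial}$ and the equivalence with condition (J) via Corollary 2.18 are right. More usefully, your last paragraph supplies a point the paper leaves implicit: Remark 2.3 (and strong minimality generally) is phrased for finitely generated $F$ inside ${\mathcal U}$, whereas Fact 3.2 concerns an arbitrary abstract extension $K<L$ of ${\mathbb C}(t)$; your descent --- a putative relation $tr.deg(K\langle y\rangle/K)=1$ is witnessed by finitely many coefficients, hence over a finitely generated $F\leq K$, and $F\langle y\rangle$ then embeds in ${\mathcal U}$ over ${\mathbb C}(t)$, contradicting the $0$-or-$2$ dichotomy --- is a correct and clean way to bridge that. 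If you want a non-circular proof, the missing piece is exactly Umemura's Lemma 0 (the Kolchin--Kovacic argument), which must either be reproduced or cited as the external black box, in place of an appeal to a condition-(J) statement that the paper never asserts for $P_I$.
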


Fact 3.2 is attributed to Kolchin in \cite{Marker} (Theorem 5.18) and to Kolchin-Kovacic in \cite{Umemura1} (Lemma 0). It was also rediscovered by Nishioka \cite{Nishioka1}. In any case \cite{Umemura1} gives a complete proof.

Painlev\'e proved that $P_{I}$ has no solution algebraic over ${\mathbb C}(t)$ and again a complete proof appears in \cite{Umemura1} (Lemma 0.8).

Nishioka \cite{Nishioka2}(Theorem 1)  proves:
\begin{fct} If $K < L$ are differential fields containing ${\mathbb C}(t)$ and $y_{1},..,y_{n}$ are distinct solutions of $P_{I}$ in $L$ each of which is not in $K^{alg}$, then $y_{1}, y_{1}',....y_{n},y_{n}'$ is algebraically independent over $K$. 
\end{fct}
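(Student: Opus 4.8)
The plan is to treat the solution set $X$ of $P_{I}$ as a strongly minimal set and to derive the full independence statement by combining \emph{geometric triviality} (which reduces any algebraic dependence among the solutions to one between a single pair) with a \emph{rigidity} property that excludes any dependence between two distinct non-algebraic solutions. Write $V = {\mathbb A}^{2}$ equipped with the Hamiltonian $\partial$-variety structure attached to $P_{I}$, so that $X = (V,D_{V})^{\partial}$. By Fact 3.2 together with Remark 2.3, for every differential field $K \supseteq {\mathbb C}(t)$ and every solution $y$ one has $y \in K^{alg}$ or $tr.deg(K\langle y\rangle/K) = 2$; this is exactly strong minimality of $X$, of order $2$. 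Throughout I work over a countable $K$ finitely generated over ${\mathbb C}(t)$ and use the model-theoretic algebraic closure $acl$ on $X$.

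First I would show $X$ is geometrically trivial by eliminating the first two cases of the trichotomy theorem. For case (i), nonorthogonality to ${\mathbb C}$ would make the general solution of $P_{I}$ isoconstant; specialising the resulting constant of integration to algebraic values would produce a solution algebraic over ${\mathbb C}(t)^{alg}$, contradicting Painlev\'e's theorem that $P_{I}$ has no algebraic solution (and, since $order({\mathbb C}) = 1 \neq 2 = order(X)$, Remark 2.7 already forbids a finite-to-finite correspondence). For case (ii), Remark 2.7 forces $order(A^{\sharp}) = order(X) = 2$, while $A$ does not descend to ${\mathbb C}$, so by Fact 2.5(ii) we get $\dim A = 1$; hence $A = E$ is an elliptic curve not descending to ${\mathbb C}$ and $X$ is nonorthogonal to $E^{\sharp}$. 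I would rule this out by showing that no finite-to-finite correspondence carries generic solutions of $P_{I}$ onto points of such a Manin kernel $E^{\sharp}$: such a correspondence would transport the translation structure of $E^{\sharp}$ to $X$, incompatible with the rigidity of $P_{I}$. Only case (iii) then survives, so $X$ is geometrically trivial.

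Next I would establish the rigidity needed to upgrade geometric triviality to full independence: for distinct solutions $y_{i} \neq y_{j}$, neither in $K^{alg}$, one has $y_{i} \notin acl(K,y_{j})$, equivalently $tr.deg(K\langle y_{i},y_{j}\rangle/K) = 4$. Indeed, if this failed then by strong minimality and Remark 2.4 we would have $y_{j} \in acl(K,y_{i})$, producing a nontrivial $K$-definable finite-to-finite self-correspondence of $X$ not contained in the diagonal. Such correspondences are precisely the B\"acklund/symmetry transformations of $P_{I}$, and because $P_{I}$ carries no parameters it admits none beyond the identity; equivalently, $acl(K,y)\cap X$ reduces to $y$ together with the solutions already in $K^{alg}$, which via Lemma 2.21 also yields $\omega$-categoricity and disintegration.

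Finally I would combine the two ingredients. Let $y_{1},\dots,y_{n}$ be distinct solutions, none in $K^{alg}$ (hence all generic over $K$), and suppose for contradiction that $tr.deg(K\langle y_{1},\dots,y_{n}\rangle/K) < 2n$. By geometric triviality, Definition 2.6(i) gives some $i<j$ with $tr.deg(K\langle y_{i},y_{j}\rangle/K) < 4$, contradicting the rigidity step. Hence $y_{1},y_{1}',\dots,y_{n},y_{n}'$ are algebraically independent over $K$. The reduction itself is formal once strong minimality, the trichotomy, and rigidity are in place; the genuine difficulty, and the main obstacle, lies in the two $P_{I}$-specific geometric inputs of the middle steps, namely excluding nonorthogonality to an elliptic Manin kernel $E^{\sharp}$ in case (ii) and establishing the absence of nontrivial self-correspondences. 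Both go beyond pure model theory and require the differential-algebraic analysis of $P_{I}$ — this is in essence the content of Nishioka's theorem, which is strictly stronger than the geometric triviality that our general method yields.
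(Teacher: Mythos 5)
There is a genuine gap, and in fact the paper does not prove this statement at all: Fact 3.3 is quoted verbatim from Nishioka (Theorem 1 of \cite{Nishioka2}), and the paper's logical flow runs in the \emph{opposite} direction to yours --- it uses Nishioka's independence theorem to \emph{deduce} $\omega$-categoricity and geometric triviality of $P_{I}$, rather than deriving independence from them. Your reduction of the statement to (a) geometric triviality of $X$ and (b) pairwise rigidity ($y_{i}\notin acl(K,y_{j})$ for distinct non-algebraic solutions) is logically sound as a reduction, but both (a) and (b) are left as assertions, and together they constitute essentially the entire content of the theorem. For (b), the claim that ``$P_{I}$ carries no parameters, hence admits no B\"acklund transformations beyond the identity'' is not an argument: the absence of parameters does not preclude nontrivial $K$-definable finite-to-finite self-correspondences of $X$ for larger differential fields $K$, and ruling these out is precisely the differential-algebraic work Nishioka does.

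The gap in (a) is more structural. The paper's method for eliminating trichotomy case (ii) --- nonorthogonality to $E_{a}^{\sharp}$ --- in Propositions 3.6, 3.9, 3.12, 3.15 and 3.18 depends essentially on the equation having a transcendental parameter: one uses uniform definability of $E_{b}^{\sharp}$ (Lemma 2.25) to express the nonorthogonality by a formula in the parameter, then specializes the parameter into the infinite ``bad'' set (e.g.\ $\tfrac{1}{2}+\mathbb{Z}$ for $P_{II}$) where strong minimality fails, producing an order clash. Since $P_{I}$ has no parameters, this specialization argument is unavailable, which is exactly why the paper handles $P_{I}$ separately by citing Nishioka instead of running its own machine. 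Your substitute --- that a correspondence with $E^{\sharp}$ would ``transport the translation structure of $E^{\sharp}$ to $X$, incompatible with the rigidity of $P_{I}$'' --- appeals to the very rigidity you have not yet established, so the argument is circular at that point. You do flag honestly at the end that both geometric inputs are the real content and are not supplied; that candor is welcome, but it means the proposal is a framework for a proof rather than a proof.
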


This gives $\omega$-categoricity and geometric triviality of $P_{I}$. \\

Although not required we briefly mention the Hamiltonian nature of the algebraic $\partial$-variety attached to $P_{I}$. 
Rewrite $P_{I}$ as the system  $y' = x$, $x' = 6y^{2} + t$ in indeterminates $y,x$. 
Choosing $H(y,x,t)$ to be $\frac{1}{2}x^2-2y^3+ty$, we see that the system can be written in Hamiltonian form as

\begin{equation}
\begin{split}
 & y' =\frac{\partial H}{\partial x}\\
 & x' =-\frac{\partial H}{\partial y}
\end{split}
\end{equation}

We obtain the algebraic $\partial$-variety $({\mathbb A}^{2}, s)$ where $s(y,x) = (y,6y^{2} + t)$. Writing the vector field $s$  as a derivation $D$ on $K[y,x]$  (for some/any differential field $(K,\partial)\geq ({\mathbb C}(t),d/dt)$) extending $\partial$, we have that 

\[D=\partial+x\frac{\partial}{\partial y}+(6y^2+t)\frac{\partial}{\partial x}.\]

where for $P\in K[y,x]$, $\partial(P)$ is the result of applying $\partial$ to the coefficients of $P$. $D$ is sometimes called a ``Hamiltonian vector field".

The solution set of $P_{I}$ in $\mathcal U$ can be identified with  $({\mathbb A}^{2},D)^{\partial}$.

\subsection{The family $P_{II}$.}
For $\alpha\in \mathcal C$, $P_{II}(\alpha)$ is the following equation
\begin{equation*}
y''=2y^3+ty+\alpha.
\end{equation*}

Defining $x$ to be $y' + y^{2} +t/2$, we obtain the following equivalent (Hamiltonian) system:

\[S_{II}({\alpha})\left\{
\begin{array}{rll}
y'&=&x-y^2-\frac{t}{2}\\
x'&=&2xy+\alpha +\frac{1}{2}.\\
\end{array}\right.\]

We write $D(\alpha)$ for the corresponding vector field 
(as a derivation)  \[\partial+(x-y^{2} -\frac{t}{2})\frac{\partial}{\partial y}+(2xy + \alpha + \frac{1}{2})\frac{\partial}{\partial x}.\]

Note that the solution set of $P_{II}(-1/2)$ is not strongly minimal: the ODE $y' = -y^{2} - t/2$ defines a proper infinite differential algebraic subvariety.  Equivalently the curve defined by $x=0$ is an algebraic $\partial$-subvariety of $({\mathbb A}^{2}, D(-1/2))$.

In general there are ``Backlund transformations" which take solutions of $S_{II}(\alpha)$ to solutions of $S_{II}(-1-\alpha)$, $S_{II}(\alpha-1)$ and $S_{II}(\alpha + 1)$.  From (I) and (II) on p.160 of \cite{Umemura2} we see:

\begin{fct} For $\alpha\in\mathbb{C}$, $D(\alpha)$ satisfies condition (J)  if and only if 
$\alpha\notin\frac{1}{2}+\mathbb{Z}$.
\end{fct}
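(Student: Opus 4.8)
The plan is to reformulate condition (J) geometrically and then treat the two directions of the equivalence separately. By Corollary 2.18, $D(\alpha)$ satisfies condition (J) if and only if $(\mathbb{A}^{2},D(\alpha))^{\partial}$ is strongly minimal, equivalently if and only if $D(\alpha)$ admits no invariant algebraic curve (Darboux polynomial) over any differential field $K\geq \mathbb{C}(t)$, i.e. no nonconstant $P\in K[x,y]$ with $D(\alpha)(P)=GP$ for some $G\in K[x,y]$. So the task is to show such a $P$ exists precisely when $\alpha\in\frac{1}{2}+\mathbb{Z}$.

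For the direction $\alpha\in\frac{1}{2}+\mathbb{Z}\Rightarrow\neg(J)$ I would exhibit the invariant curves explicitly. The text already notes that $x$ is a Darboux polynomial for $D(-1/2)$ (cofactor $2y$), corresponding to the Riccati solutions $y'=-y^{2}-t/2$ of $P_{II}(-1/2)$; a symmetric computation gives $P=x-2y^{2}-t$ for $D(1/2)$ (cofactor $-2y$). These are the values fixed by, and adjacent to, the reflection $\alpha\mapsto -1-\alpha$, whose fixed point $-1/2$ generates the coset $-\tfrac12+\mathbb{Z}=\tfrac12+\mathbb{Z}$ under the integer shifts. To propagate the failure of (J) across the whole coset I would invoke the B\"acklund transformations $\alpha\mapsto\alpha\pm 1$ recorded above: these are birational self-maps of $\mathbb{A}^{2}$ over $\mathbb{C}(t)$ conjugating $D(\alpha)$ to $D(\alpha\pm 1)$, and a birational conjugacy carries an invariant curve to an invariant curve away from its indeterminacy locus. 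Hence one invariant curve at $\alpha=-1/2$ forces one at every $\alpha\in\frac12+\mathbb{Z}$. The point needing care is to check that the B\"acklund image of the curve is again a curve, i.e. is not collapsed to a point or absorbed into the indeterminacy locus.

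For the converse, $\alpha\notin\frac12+\mathbb{Z}\Rightarrow(J)$, I would argue by contradiction from a hypothetical irreducible Darboux polynomial $P\in K[x,y]$ with $D(\alpha)(P)=GP$. The key tool is the weighting $w(y)=1$, $w(x)=2$, under which $D(\alpha)$ splits into parts shifting weighted degree by $+1,0,-1,-2$, with top part $D^{(+1)}=(x-y^{2})\partial_y+2xy\,\partial_x$. Comparing top weighted-homogeneous parts forces the cofactor to have weighted degree at most one, so the leading part $P_{N}$ of $P$ satisfies $D^{(+1)}(P_{N})=by\,P_{N}$ for some scalar $b$. One then classifies the irreducible weighted-homogeneous Darboux polynomials of $D^{(+1)}$, which are essentially $x$ and $x-2y^{2}$ (cofactors $\pm 2y$), pinning down the admissible leading terms and degree of $P$; descending through the lower-weight components $P_{N-1},P_{N-2},\dots$ yields a cascade of inhomogeneous linear equations in $t$ and $\alpha$, and one shows this cascade is consistent only when $\alpha+\tfrac12\in\mathbb{Z}$.

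I expect the converse to be the main obstacle; it is precisely the computation of Umemura and Watanabe (conditions (I) and (II) on p.160 of \cite{Umemura2}), and in practice I would quote their classification rather than reproduce the full descent. Two subtleties deserve attention. First, one must control $P$ at infinity on a suitable compactification of $\mathbb{A}^{2}$, where additional invariant components can hide. Second, one must ensure the argument is valid for $P$ defined over an arbitrary $K\geq\mathbb{C}(t)$ rather than only over $\mathbb{C}(t)$; this is harmless because the leading equation and the descent are linear with coefficients in $\mathbb{C}(t,\alpha)$, so their consistency is a condition on $\alpha$ alone, and existence of a Darboux polynomial over some extension already forces existence over a controlled (finitely generated) one.
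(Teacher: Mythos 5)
Your proposal is essentially the paper's own treatment: the paper states this as a Fact extracted from (I) and (II) on p.~160 of Umemura--Watanabe \cite{Umemura2}, giving no proof beyond that citation, after recording exactly the ingredients you use (the invariant curve $x=0$ for $D(-1/2)$ coming from the Riccati equation, and the B\"acklund shifts $\alpha\mapsto\alpha\pm1$). Your explicit Darboux polynomials at $\alpha=\pm\tfrac12$ and the propagation along the coset via B\"acklund conjugation are correct, and for the converse you defer, as the paper does, to the Umemura--Watanabe classification.
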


So from Corollary 2.18, we see:

\begin{cor}\label{cor1}
For $\alpha\in\mathbb{C}$, the solution set (in ${\mathcal U}$) of $P_{II}(\alpha)$ is strongly minimal if and only if $\alpha\not\in\frac{1}{2}+\mathbb{Z}$. Moreover if $\alpha\in\frac{1}{2}+\mathbb{Z}$ then the solution set of $P_{II}(\alpha)$ contains a definable subset of order $1$. 
\end{cor}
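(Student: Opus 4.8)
The plan is to read off both assertions as immediate consequences of Corollary 2.18 and Fact 3.4, with essentially no further computation. First I would observe that, via the Hamiltonian reformulation $S_{II}(\alpha)$ set up just above (using the substitution $x = y' + y^{2} + t/2$), the solution set of $P_{II}(\alpha)$ in $\mathcal U$ is precisely the finite-dimensional differential algebraic variety $(\mathbb A^{2}, D(\alpha))^{\partial}$, where $D(\alpha)$ is the displayed derivation on $F[x,y]$ with $F = \mathbb C(t)$. This places us exactly in the situation of Corollary 2.18, with $V = \mathbb A^{2}$.

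For the main equivalence, I would apply Corollary 2.18(a): $(\mathbb A^{2}, D(\alpha))^{\partial}$ is strongly minimal if and only if there is no $K \geq F$ and no nonconstant polynomial $P(x,y)$ over $K$ with $D(\alpha)(P) = GP$ for some $G(x,y)$ over $K$. By Remark 2.19 this is exactly Umemura's condition (J) for the vector field $D(\alpha)$. Fact 3.4 states that condition (J) holds if and only if $\alpha \notin \frac{1}{2} + \mathbb Z$. Chaining these three statements together yields the first claim.

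For the ``moreover'', I would invoke the additional equivalences of Corollary 2.18(b), according to which condition (J) for $D(\alpha)$ is also equivalent to $(\mathbb A^{2}, D(\alpha))^{\partial}$ having no order-$1$ definable subset. Taking contrapositives, the failure of strong minimality, namely $\alpha \in \frac{1}{2} + \mathbb Z$, is precisely equivalent to the existence of an order-$1$ definable subset of the solution set, which is exactly the ``moreover'' assertion.

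There is no genuine obstacle here, since the hard analytic input (Fact 3.4) and the translation between strong minimality and condition (J) (Corollary 2.18) have already been arranged. The only points worth flagging explicitly are that the ``condition (J)'' appearing in Fact 3.4 is literally the right-hand side of Corollary 2.18(a), which is the content of Remark 2.19, and that the system $S_{II}(\alpha)$ is genuinely equivalent to $P_{II}(\alpha)$ so that the two solution sets coincide. As a concrete sanity check for the ``moreover'', one may note the explicit witness in the case $\alpha = -1/2$ recorded above (the curve $x = 0$, equivalently the order-$1$ subvariety defined by $y' = -y^{2} - t/2$); applying the B\"acklund transformations between $S_{II}(\alpha)$ and $S_{II}(\alpha \pm 1)$ transports such an order-$1$ subvariety to every $\alpha \in \frac{1}{2} + \mathbb Z$, giving an alternative, constructive route to the same conclusion.
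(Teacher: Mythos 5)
Your proposal is correct and is essentially identical to the paper's own (implicit) proof: the paper derives Corollary 3.5 directly from Fact 3.4 via Corollary 2.18, exactly as you do, with part (a) giving the equivalence with strong minimality and part (b) giving the order-$1$ subset in the non-minimal case. The Bäcklund-transformation remark is a nice sanity check but is not needed.
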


We now give the main result in the case of $P_{II}$. Its proof is the model for all subsequent proofs of the main result for $P_{III}- P_{VI}$,
\begin{prop} Let $\alpha\in {\mathbb C}$ be transcendental. Then (the solution set of) $P_{II}(\alpha)$ is strongly minimal and geometrically trivial.
\end{prop}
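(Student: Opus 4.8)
The plan is to apply the trichotomy theorem (Theorem 2.7) to the strongly minimal set $X = P_{II}(\alpha)^{\partial}$ and rule out the two nontrivial alternatives, leaving geometric triviality. Strong minimality for transcendental $\alpha$ is immediate from Corollary 2.5 (since $\alpha \notin \frac{1}{2} + \mathbb{Z}$), so the content is entirely in eliminating cases (i) and (ii) of the trichotomy. First I would observe that if $X$ falls into case (i), i.e. $X$ is nonorthogonal to $\mathbb{C}$, or case (ii), i.e. $X$ is nonorthogonal to $A^{\sharp}$ for some simple abelian variety $A$ over ${\mathcal U}$ not descending to $\mathbb{C}$, then in either case $X$ is nonorthogonal to a strongly minimal set on which there is a rich definable structure (a definable field in case (i), or an abelian group in case (ii)). The strategy is to show that such nonorthogonality would force $P_{II}(\alpha)$ to fail strong minimality or $\omega$-categoricity for a \emph{non-generic} (algebraic) value of $\alpha$, contradicting the fact that the set of bad parameters is ``small'' while the relevant definable phenomena spread over infinitely many components.

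The key mechanism is the uniform definability of the $A \mapsto A^{\sharp}$ functor (Lemma 2.25) together with the ``infinitely many components'' observation flagged in the introduction. Concretely, I would set up a definable family: consider $P_{II}(y)$ as $y = \alpha$ ranges over $\mathbb{C}$, giving a definable family of (mostly strongly minimal) order-$2$ sets $X_{\alpha}$. If the generic $X_{\alpha}$ were in case (ii), then by Lemma 2.25 the corresponding abelian varieties $A$ and their $A^{\sharp}$ would vary in a definable family as well, and nonorthogonality is a definable-in-parameters condition in $\omega$-stable theories. One then argues that the locus of $\alpha$ for which $X_{\alpha}$ is nonorthogonal to some $A^{\sharp}$ (or to $\mathbb{C}$) is a \emph{constructible} (definable) subset of $\mathbb{C}$; being constructible and infinite it must be cofinite, hence contain all but finitely many $\alpha$. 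But for case (i), nonorthogonality to $\mathbb{C}$ is a form of algebraic integrability after base change, and combined with the B\"acklund transformations shifting $\alpha \mapsto \alpha \pm 1$ and $\alpha \mapsto -1-\alpha$, one gets that the ``bad'' locus is invariant under these shifts; the known classification (strong minimality fails exactly on $\frac{1}{2} + \mathbb{Z}$, which is where the relevant special structure lives) forces the nonorthogonality locus to be contained in a countable union of cosets of $\mathbb{Z}$, which cannot be cofinite. This contradiction rules out cases (i) and (ii) for generic, hence for transcendental, $\alpha$.

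The main obstacle I anticipate is making the ``definable family / constructible locus'' argument precise, especially for case (ii): one must confirm that nonorthogonality of $X_{\alpha}$ to the family of abelian $A^{\sharp}$'s is itself a first-order expressible condition on the parameter $\alpha$, uniformly in the (a priori unbounded) abelian varieties $A$ that could witness it. This is exactly where Lemma 2.25 does the heavy lifting: it converts the existence of a witnessing $A^{\sharp}$ into a statement about a definable family $\psi(x, y)$, so that ``$\exists$ abelian $A$ with $X_{\alpha}$ nonorthogonal to $A^{\sharp}$'' becomes a definable condition on $\alpha$ (one must bound the dimension of $A$ using $order(A^{\sharp}) = order(X_{\alpha}) = 2$ via Remark 2.6, so $\dim A \leq 2$, keeping the family parametrized). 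Once the locus is seen to be constructible, the contrast between ``constructible infinite subset of $\mathbb{C}$ is cofinite'' and ``the genuinely bad parameters form only a countable, non-cofinite set with infinitely many components'' closes the argument. I expect the remaining verification — that $P_{II}(\alpha)$ admits no algebraic solutions and is not algebraically integrable over $\mathbb{C}$ for generic $\alpha$, ruling out case (i) directly — to follow from the Japanese-school input already quoted, and to be comparatively routine.
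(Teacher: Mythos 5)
Your overall architecture matches the paper's: strong minimality from Corollary 3.5 (you cite 2.5, but you clearly mean the $P_{II}$ corollary), then the trichotomy theorem, with Lemma 2.25 providing uniform definability of the $E_b^{\sharp}$ so that the putative nonorthogonality is expressed by a formula $\eta(w)$ in the parameter, which by independence of $\alpha$ from $t$ and strong minimality of the constants must hold on a cofinite subset of $\mathbb{C}$, in particular at some $\alpha_1\in\frac{1}{2}+\mathbb{Z}$. But two things need fixing. First, case (i) of the trichotomy requires no machinery at all: $Y(\alpha)$ has order $2$ while $\mathbb{C}$ has order $1$, and nonorthogonal strongly minimal sets have equal order (Remark 2.7), so orthogonality to the constants is immediate. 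Your detour through B\"acklund transformations and ``algebraic integrability'' for this case is unnecessary and, as written, unjustified.

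Second, and more seriously, the decisive final step is missing. Once you have a finite-to-finite definable correspondence $R$ between $Y(\alpha_1)$ and some $E_{a_1}^{\sharp}$ with $a_1\notin\mathbb{C}$, for $\alpha_1\in\frac{1}{2}+\mathbb{Z}$, you must say why this is absurd. The paper's reason: by Corollary 3.5, $Y(\alpha_1)$ contains an order-$1$ definable subset $Z$; taking $z$ generic in $Z$ over a field of definition $K$ and $(x,y)\in E_{a_1}^{\sharp}$ with $R(z,(x,y))$, one gets $(x,y)\in K\langle z\rangle^{alg}$, hence $tr.deg(K\langle x,y\rangle/K)\leq 1$, contradicting the fact that $E_{a_1}^{\sharp}$ is strongly minimal of order $2$. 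Your substitute --- that the nonorthogonality locus is ``contained in a countable union of cosets of $\mathbb{Z}$, which cannot be cofinite'' --- is asserted without proof and inverts the logic of the argument: the point is not that the locus is small, but that it is cofinite, therefore meets $\frac{1}{2}+\mathbb{Z}$, and at such a point a transcendence-degree computation yields the contradiction. Relatedly, your claim that ``nonorthogonality is a definable-in-parameters condition in $\omega$-stable theories'' is not true at the level of generality you invoke; the paper obtains definability only by fixing the particular witnessing correspondence $\chi(-,-,c)$ at the generic parameter and existentially quantifying over its parameters (Claim II), and you would need to do the same.
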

\begin{proof} Let $Y(\alpha)$ denote the solution set of $P_{II}(\alpha)$. Let $\alpha$ be transcendental. By Corollary 3.5, $Y(\alpha)$ is strongly minimal. As $ord(Y(\alpha)) = 2$, $Y(\alpha)$ is orthogonal to the differential algebraic variety ${\mathbb C}$ (see Remark 2.7). So if $Y(\alpha)$ is NOT geometrically trivial, then by the trichotomy theorem Theorem 2.8, $Y(\alpha)$ has to be nonorthogonal to $A^{\sharp}$ for some simple abelian variety $A$ with ${\mathbb C}$-trace $0$.
 
\vspace{2mm}
\noindent
{\em Claim I.}  $A$ is an elliptic curve.
\newline
{\em Proof.}  By Remark 2.7 $ord(A^{\sharp}) = 2$. So by 2.5 (ii), $dim(A) \leq 2$. If $dim(A) = 2$ then by 2.5(ii) again $A$ descends to $\mathbb C$, a contradiction, so $dim(A) = 1$ and $A$ is an elliptic curve. 

\vspace{2mm}
\noindent
So $A$ is the solution set of $y^{2} = x(x-1)(x-a)$ for some $a\in {\mathcal U}\setminus{\mathbb C}$ (in fact we could choose $
a\in {\mathbb C}(t)^{alg}$ but this does not simplify the argument). Let us rewrite $A$ as $E_{a}$. Applying Lemma 2.25 to the family of elliptic 
curves in Legendre form:  $\{E_{b}: y^{2} = x(x-1)(x-b): b\neq 0,1\}$, we obtain a formula $\psi(x,y,z)$ (in the language of differential rings) 
such that $\psi(x,y,b)$ defines $E_{b}^{\sharp}$  (for $b\neq 0,1$). Now the nonorthogonality of $Y({\alpha})$ and 
$E_{a}^{\sharp}$ is witnessed by some definable set $Z\subset Y(\alpha)\times E_{a}^{\sharp}$ which, without loss of 
generality projects onto each of $Y(\alpha)$, $E_{a}^{\sharp}$ and moreover such that each of these projections has 
fibres of cardinality $\leq k$ for some fixed $k$. Now $Z$ is defined by some formula $\chi(-,-,c)$ where we witness 
the parameters in the formula by $c$. 

\vspace{2mm}
\noindent
{\em Claim II.} There is an $L$-formula $\rho(w,u,v)$  with  additional parameter $t$ (where $u$ is possibly a tuple of variables) such that for any $\alpha_{1}, c_{1}, a_{1}$ from ${\mathcal U}$, ${\mathcal U} \models \rho(\alpha_{1},c_{1},a_{1})$ if and only $\alpha_{1}$ is a constant, $a_{1}$ is NOT a constant, and $\chi(-,-,c_{1})$ defines a subset of $Y(\alpha_{1})\times E_{a_{1}}^{\sharp}$ which projects onto each of $Y(\alpha_{1})$, $E_{a_{1}}^{\sharp}$ with all fibres of cardinality at most $k$. 
\newline
{\em Proof.}  This follows from the existence of the formula $\psi(x,y,z)$, i.e. uniform definablility of 
$E_{b}^{\sharp}$ as $b$ varies.  (The additional parameter $t$ in $\rho$ is there because $P_{II}(\alpha)$ has parameter $t$ in addition to $\alpha$.) 

\vspace{2mm}
\noindent
So by Claim II, we have that ${\mathcal U} \models \rho(\alpha, a, c)$, in particular taking $\eta(w)$ to be the formula $\exists u, v(\rho(w,u,v))$, we have that ${\mathcal U} \models \eta(\alpha)$. 

\vspace{2mm}
\noindent
{\em Claim III.}  $\models \eta(\alpha_{1})$ for all but finitely many $\alpha_{1}\in {\mathbb C}$. 
\newline
{\em Proof.} This is because $\eta(w)$ is over $t$, $\alpha$ is independent from $t$ over $\emptyset$ and ${\mathbb C}$ 
is strongly minimal. A little more slowly: By strong minimality of the field ${\mathcal C}_{\mathcal U} = {\mathbb C}$ 
of constants, $\eta(w)$ defines either a finite or cofinite subset  of ${\mathbb C}$. Suppose for the sake of 
contradiction that it defines a finite subset. Then as $\models \eta(\alpha)$, we would conclude that $\alpha\in acl(t)$ in ${\mathcal U}$. 
But 
$acl(t)$ is simply the field-theoretic algebraic closure of the differential field ${\mathbb Q}(t)$ generated by $t$, 
which clearly does not contain the transcendental element $\alpha \in {\mathbb C}$. So $\eta(w)$ defines a cofinite 
subset of ${\mathbb C}$ as required.

\vspace{2mm}
\noindent
By Claim III we conclude that $\models \eta(\alpha_{1})$ for some $\alpha_{1}\in 1/2 + {\mathbb Z}$. Hence by the definition of $\eta(w)$ there is $a_{1}\notin {\mathbb C}$ and some finite-to-finite definable  relation $R$ between $Y(\alpha_{1})$ and $E_{a_{1}}^{\sharp}$. 
By 3.5  $Y(\alpha_{1})$ contains an order $1$ definable subset $Z$ say. Let $K$ be a countable differential field over 
which all the data $Y(\alpha_{1})$, $E_{a_{1}}^{\sharp}, Z, R$ are defined. Let $z$ be a generic point of $Z$ over $K$
, and let $(x,y)$ a point of $E_{a_{1}}^{\sharp}$ such that $R(z,(x,y))$. Then $tr.deg(K\langle z\rangle/K) = 1$ (as 
$Z$ has order $1$ and $z\notin K^{alg}$), and $tr.deg(K\langle x,y\rangle /K) = 2$ (as by 2.5 $E_{a_{1}}^{\sharp}$ is 
strongly minimal of order $2$ and $(x,y)\notin K^{alg}$). We now have a contradiction, because $R$ witnesses that 
$(x,y)$ is in the algebraic closure of $K,z$, which we know to be the field-theoretic algebraic closure of $K\langle 
z\rangle$. 
\end{proof}

\subsection{The family $P_{III}$.}

On the face of it, the family $P_{III}$ is a $4$-parameter family: where $P_{III}(\alpha,\beta,\gamma,\delta)$, $\alpha,\beta,\gamma,\delta\in\mathbb{C}$ is given by the following 
\begin{equation*}
y''=\frac{1}{y}(y')^2-\frac{1}{t}y'+\frac{1}{t}(\alpha y^2+\beta)+\gamma y^3+\frac{\delta}{y}.
\end{equation*}

Okamoto \cite{Okam4} rewrites the equation as a $2$-parameter ``Hamiltonian system" which is then further studied by Umemura and Watanabe \cite{Umemura3}. We give a quick summary. Okamoto replaces $t^{2}$ by $t$ and 
$ty$ by $t$ to obtain the ``equivalent" family:

\[P_{III'}(\alpha,\beta,\gamma,\delta):\;\;\;\;\;  y''=\frac{1}{y}(y')^2-\frac{1}{t}y'+\frac{q^2}{4t^2}(\gamma q+\alpha)+\frac{\beta}{4t}+\frac{\delta}{4q} \]

and we are reduced to showing that for algebraically independent $\alpha, \beta, \gamma, \delta \in \mathbb C$, the solution set of $P_{III'}(\alpha, \beta, \gamma, \delta)$ is strongly minimal and geometrically trivial. In particular we can assume that neither $\gamma$ nor $\delta$ are $0$.\\

Okamoto then points out that for $\lambda, \mu\in \mathbb C$, the transformation taking $y$ to $\lambda y$ and $t$ to 
$\mu t$ takes $P_{III'}(\alpha,\beta,\gamma,\delta)$ to $P_{III'}(\lambda\alpha, \mu\lambda^{-1}\beta, 
\lambda^{2}\gamma, \mu^{2}\lambda^{-2}\delta)$. 

Hence taking $\lambda^{2} = 4/\gamma$ and $\mu^{2} = 1/\gamma\delta$  (assuming as above that $\gamma\delta\neq 0$), this transformation takes $P_{III'}(\alpha,\beta,\gamma,\delta)$ to $P_{III'}(\lambda\alpha, \mu\lambda^{-1}\beta, 4, -4)$, and moreover if 
$\alpha,\beta, \gamma, \delta$ are algebraically independent, so are $\lambda\alpha, \mu\lambda^{-1}\beta$. Hence the family  $P_{III'}(\alpha,\beta, \gamma, \delta)$ can be replaced by the family $P_{III'}(\alpha,\beta, 4,-4)$ and we are reduced to showing that for $\alpha,\beta$ algebraically independent the solution set of $P_{III'}(\alpha,\beta, 4, -4)$ is strongly minimal and geometrically trivial.   Finally $P_{III'}(\alpha,\beta,4,-4)$ can be written as a ``Hamiltonian $\partial$-variety"

\[S_{III'}({v_{1},v_{2}})\left\{
\begin{array}{rll}
\frac{dy}{dt}&=&\frac{1}{t}(2y^2x-y^2+v_1y+t)\\
\frac{dx}{dt}&=&\frac{1}{t}(-2yx^2+2yx-v_1x+\frac{1}{2}(v_1+v_2))

\end{array}\right.\]

where $\alpha = 4v_{2}$ and $\beta = -4(v_{1}-1)$. \\

(Note that $\alpha$ and $\beta$ are algebraically independent if and only if $v_{1}$ and $v_{2}$ are.)

The Hamiltonian is here: \[ H(v_{1},v_{2}) =\frac{1}{t}\left[y^2x^2-\left(y^2-v_{1}y-t\right)x-\frac{1}{2}(v_{1}+v_{2})y\right] \]

and the Hamiltonian vector field on ${\mathbb A}^{2}$ is:
\[D(v_{1},v_{2}) =  \partial + \frac{1}{t}(2y^2x-y^2+v_1y+t) \frac{\partial}{\partial y} + 
\frac{1}{t}(-2yx^2+2yx-v_1x+\frac{1}{2}(v_1+v_2))\frac{\partial}{\partial x} \]

From \cite{Umemura3} one now extracts:

\begin{fct}  $D(v_{1},v_{2})$ satisfies condition (J) if and only if  $v_{1} + v_{2}\notin 2{\mathbb Z}$ and $v_{1}-v_{2}\notin 2{\mathbb Z}$. 
\end{fct}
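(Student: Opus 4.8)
Condition (J) for $D(v_1,v_2)$ is, by Corollary 2.18 and Remark 2.19, exactly the assertion that the algebraic $\partial$-variety $({\mathbb A}^2, D(v_1,v_2))$ has no one-dimensional algebraic $\partial$-subvariety, i.e.\ no invariant algebraic curve $P(x,y)=0$ over some $K \geq {\mathbb C}(t)$ with $D(P)=GP$. So the plan is to prove the two implications separately: that each of $v_1+v_2 \in 2{\mathbb Z}$ and $v_1-v_2 \in 2{\mathbb Z}$ forces such an invariant curve to exist (whence (J) fails), and conversely that when both conditions fail no invariant curve can exist.

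For the first implication I would begin by producing invariant curves at two ``seed'' walls by direct computation. On the line $x=0$ the $\partial/\partial x$-component of $D(v_1,v_2)$ reduces to $\frac{1}{2t}(v_1+v_2)$, so $x=0$ is invariant precisely when $v_1+v_2=0$; indeed $D(x) = \frac{1}{t}(-2yx+2y-v_1)\,x$, exhibiting $P=x$ as an invariant polynomial. Likewise on $x=1$ that component reduces to $\frac{1}{2t}(v_2-v_1)$, so $x=1$ is invariant when $v_1=v_2$, with $D(x-1)=\frac{1}{t}(-2yx-v_1)(x-1)$. These settle the walls $v_1+v_2=0$ and $v_1-v_2=0$. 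To reach the remaining walls I would invoke the B\"acklund transformations of Okamoto and Umemura--Watanabe: these are birational transformations of ${\mathbb A}^2$ that conjugate $D(v_1,v_2)$ to $D(v_1',v_2')$, where $(v_1',v_2')$ ranges over the orbit of $(v_1,v_2)$ under an affine Weyl group of rank $2$ whose reflecting walls are exactly the lines $v_1 \pm v_2 \in 2{\mathbb Z}$. Since a birational conjugacy carries invariant curves to invariant curves, the seed curves propagate across the whole lattice, so (J) fails throughout $\{v_1+v_2 \in 2{\mathbb Z}\} \cup \{v_1-v_2 \in 2{\mathbb Z}\}$.

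The converse is the substantive direction: if $v_1 \pm v_2 \notin 2{\mathbb Z}$ then $D(v_1,v_2)$ admits no invariant algebraic curve at all. Suppose for contradiction that $C$ is an irreducible invariant curve with invariant polynomial $P$, $D(P)=GP$. The strategy is to classify such $C$ modulo the B\"acklund group and show that each one lies, after applying a group element, on a seed wall. Concretely I would analyze $D(P)=GP$ by comparing highest-degree homogeneous parts in $(x,y)$ and by examining the behaviour of $D(v_1,v_2)$ at its poles (the line $t=0$ and the points at infinity of ${\mathbb A}^2$), which constrains both $\deg G$ and $\deg P$ and forces $C$ to meet the indeterminacy locus of the system in a controlled way. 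Equivalently, an invariant curve yields a one-parameter family of Riccati-type (``classical'') solutions of $P_{III'}$, and such solutions occur only on the reflecting walls. Either route reduces an arbitrary invariant curve, via a B\"acklund transformation, to one of the seed curves $x=0$ or $x=1$, whence $v_1+v_2 \in 2{\mathbb Z}$ or $v_1-v_2 \in 2{\mathbb Z}$, contradicting genericity.

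The main obstacle is precisely this classification of invariant curves for generic $(v_1,v_2)$: bounding the degree of a putative invariant $P$ and ruling out the high-degree possibilities is the delicate step, and it is here that the detailed Hamiltonian and affine-Weyl-group analysis of Umemura--Watanabe is needed. Since for algebraically independent $v_1,v_2$ one trivially has $v_1 \pm v_2 \notin 2{\mathbb Z}$, the conclusion actually used in this paper --- (J), hence strong minimality by Corollary 2.18 --- follows at once from the Fact.
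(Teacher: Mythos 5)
The paper does not actually prove this Fact: it is extracted verbatim from Umemura--Watanabe (Theorem 1.2(i),(ii), Proposition~2.1 and Corollary~2.5 of \cite{Umemura3}), and the only original content of the paper's ``proof'' is the commentary that Watanabe works with the derivation $\partial_{1}=t\partial$ rather than $\partial$, together with the observation that $\partial$-subvarieties and $\partial_{1}$-subvarieties coincide, so the translation is harmless. Your sketch correctly reproduces the internal structure of the cited proof: the easy direction via explicit invariant curves on seed walls propagated by B\"acklund transformations (Theorem~1.2 of \cite{Umemura3}), and the hard converse via a classification of invariant curves forcing the parameters onto a wall (Proposition~2.1 and Corollary~2.5 there). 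Your seed computations check out against the system $S_{III'}(v_{1},v_{2})$ as printed in the paper: on $x=0$ one indeed gets $D(x)=\frac{1}{2t}(v_{1}+v_{2})$, and $D(x-1)=\frac{1}{t}(-2yx-v_{1})(x-1)$ when $v_{1}=v_{2}$. Since you work directly with the $\frac{1}{t}$-normalized vector field, you also implicitly sidestep the $t\partial$ versus $\partial$ issue that the paper's commentary has to address.

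That said, as a standalone proof your write-up has a genuine gap in the substantive direction: you describe a \emph{strategy} for showing that no invariant algebraic curve exists when $v_{1}\pm v_{2}\notin 2\mathbb{Z}$ (degree bounds on $P$ and $G$, behaviour at infinity and at $t=0$, reduction modulo the affine Weyl group to a seed wall), but you do not carry out any of it, and you say explicitly that this is where the analysis of Umemura--Watanabe is needed. You also assert without verification that the B\"acklund group acts with reflecting walls exactly $v_{1}\pm v_{2}\in 2\mathbb{Z}$, which is what converts the two seed walls into the full lattice of walls in the statement. Neither omission puts you at a disadvantage relative to the paper, which proves nothing here; but you should be aware that what you have written is a commented citation, not a proof, and that the delicate classification step is precisely the content being imported from \cite{Umemura3}.
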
 
{\em Commentary.}  This is contained in the statements of Theorem  1.2(i), (ii) , Proposition 2.1 and Corollary 2.5 of \cite{Umemura3}: 1.2(i) and (ii) say that if either $v_{1}+v_{2}$ or $v_{1}-v_{2}$ are in $2{\mathbb Z}$ then the solution set of the system $S_{III'}(v_{1},v_{2})$ has (many) proper differential algebraic subvarieties (hence by 2.18 $D(v_{1},v_{2})$ does not satisfy (J). On the other hand Proposition 2.1 says that if $D(v_{1},v_{2})$ satisfies (J) then for some integers $i,j,h\geq 0$, such that not both $i,j = 0$ such that 
$i(v_{1} + v_{2}) + j(v_{1}-v_{2}) + 2h(1-v_{1}) = 0$.  Strictly speaking this Proposition 2.1 studies the ``Hamiltonian vector field" corresponding to a different $\partial$-variety structure on ${\mathbb A}^{2}$, namely

\[\left\{
\begin{array}{rll}
\partial_{1}(y) &=& 2y^2x-y^2+v_1y+t\\
\partial_{1}(x) &=& -2yx^2+2yx-v_1x+\frac{1}{2}(v_1+v_2)

\end{array}\right.\]

where $\partial_{1}$ is the derivation $t\partial$ (or ``$t\frac{d}{dt}$ on ${\mathcal U}$ (with respect to which
${\mathcal U}$ is also saturated, differentially closed), but it is clear that $\partial$-subvarieties with respect to the original $\partial$-structure correspond to $\partial_{1}$-subvarieties with the new $\partial_{1}$-structure.

\begin{cor} The solution set of $S_{III'}(v_{1},v_{2})$ is strongly minimal if and only if $v_{1}+ v_{2}\notin 2{\mathbb Z}$ and $v_{1}-v_{2}\notin 2{\mathbb Z}$. 
\end{cor}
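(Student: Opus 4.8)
The plan is to recognize this statement as a direct instance of the machinery already set up, following the template of Corollary 3.5. First I would observe that the solution set of the system $S_{III'}(v_{1},v_{2})$ is, in the notation of subsection 2.2, precisely $({\mathbb A}^{2}, D(v_{1},v_{2}))^{\partial}$, where $D(v_{1},v_{2})$ is the Hamiltonian vector field displayed just above: a point $(y,x)\in {\mathbb A}^{2}({\mathcal U})$ lies in the solution set exactly when $\partial(y,x) = s(y,x)$ for the section $s$ associated to $D(v_{1},v_{2})$. This is the identification to which Corollary 2.18 applies, with $F = {\mathbb C}(t)$ and $V = {\mathbb A}^{2}$.

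Next I would apply Corollary 2.18(a), which asserts that $({\mathbb A}^{2}, D(v_{1},v_{2}))^{\partial}$ is strongly minimal if and only if there is no $K\geq {\mathbb C}(t)$ and nonconstant $P(x,y)$ over $K$ with $D(v_{1},v_{2})(P) = GP$ for some $G$, i.e.\ if and only if $D(v_{1},v_{2})$ satisfies Umemura's condition (J). Finally I would invoke Fact 3.7, which characterizes condition (J) for this vector field as the conjunction $v_{1}+v_{2}\notin 2{\mathbb Z}$ and $v_{1}-v_{2}\notin 2{\mathbb Z}$. Chaining the two equivalences yields the corollary.

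The substance of the result lies entirely in the two ingredients being chained, not in the chaining itself, so there is no genuine obstacle at the level of this corollary. The one point that requires a moment of care is the reparametrization flagged in the commentary following Fact 3.7: the condition (J) analysis extracted from \cite{Umemura3} is carried out for the derivation $\partial_{1} = t\partial$ rather than for $\partial$ itself. Since the $\partial$-subvarieties of $({\mathbb A}^{2}, D(v_{1},v_{2}))$ coincide with the $\partial_{1}$-subvarieties of the corresponding $\partial_{1}$-structure, the existence of a proper positive-dimensional algebraic $\partial$-subvariety (equivalently, the failure of condition (J), via Corollary 2.17) is unaffected, so strong minimality for the two formulations agrees and the identification above is legitimate.
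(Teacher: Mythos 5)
Your proposal is correct and matches the paper's (implicit) argument exactly: the corollary is obtained by chaining Fact 3.7 with Corollary 2.18 applied to the $\partial$-variety $({\mathbb A}^{2}, D(v_{1},v_{2}))$, just as Corollary 3.5 was obtained for $P_{II}$. Your remark on the $\partial_{1}=t\partial$ reparametrization is the same point the paper makes in its commentary following Fact 3.7, so nothing is missing.
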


We conclude our main result for $P_{III}$:

\begin{prop} Let $v_{1}, v_{2}$ be algebraically independent. Then (the solution set of) $S_{III}'(v_{1},v_{2})$ is strongly minimal and geometrically trivial. 
\end{prop}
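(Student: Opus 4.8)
The plan is to follow verbatim the template set by the proof of Proposition 3.6, with the one-dimensional parameter $\alpha$ replaced by the pair $(v_1,v_2)$ and the coset $\tfrac12+\mathbb{Z}$ replaced by the ``bad locus'' of Corollary 3.8. First I would note that since $v_1,v_2$ are algebraically independent they are in particular transcendental, so $v_1+v_2\notin 2\mathbb{Z}$ and $v_1-v_2\notin 2\mathbb{Z}$; hence by Corollary 3.8 the solution set $Y=Y(v_1,v_2)$ of $S_{III'}(v_1,v_2)$ is strongly minimal, of order $2$. Being of order $2$, $Y$ is orthogonal to $\mathbb{C}$ (Remark 2.7), so if $Y$ were not geometrically trivial the Trichotomy Theorem 2.8 would make it nonorthogonal to $A^{\sharp}$ for some simple abelian variety $A$ with $\mathbb{C}$-trace $0$. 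Exactly as in Claim I of Proposition 3.6, Remark 2.7 gives $ord(A^{\sharp})=2$, so $\dim A\leq 2$ by Fact 2.5(ii), and $\dim A=2$ would force $A$ to descend to $\mathbb{C}$; thus $A$ is an elliptic curve $E_a$ with $a\notin\mathbb{C}$.

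Next I would reproduce Claims II and III in the two-parameter setting. Writing $E_a$ in Legendre form and applying Lemma 2.25 to the family $\{E_b:y^2=x(x-1)(x-b)\}$ yields a formula $\psi(x,y,b)$ defining $E_b^{\sharp}$ uniformly in $b$. The witnessing finite-to-finite relation $Z\subseteq Y\times E_a^{\sharp}$ is defined by some $\chi(-,-,c)$. Using $\psi$ together with the evident uniform definability of the solution sets $Y(w_1,w_2)$ of $S_{III'}(w_1,w_2)$ in the parameters $(w_1,w_2)$, I would build an $L$-formula $\rho(w_1,w_2,u,v)$, with the extra parameter $t$, asserting that $w_1,w_2$ are constants, $v$ is not a constant, and $\chi(-,-,u)$ defines a subset of $Y(w_1,w_2)\times E_v^{\sharp}$ projecting onto both factors with all fibres of size at most $k$. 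Then ${\mathcal U}\models\rho(v_1,v_2,c,a)$, and putting $\eta(w_1,w_2):=\exists u\,\exists v\,\rho(w_1,w_2,u,v)$ we get ${\mathcal U}\models\eta(v_1,v_2)$.

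The step I expect to be the main obstacle is the analogue of Claim III, since here the relevant parameter space is $\mathbb{C}^2$ rather than $\mathbb{C}$ and the non-strongly-minimal locus is an infinite union of lines rather than a single coset --- this is exactly the ``infinitely many components'' phenomenon flagged in the introduction. Because $v_1,v_2$ are algebraically independent and lie in the constants, $(v_1,v_2)$ realises the unique generic type of the irreducible variety $\mathbb{C}^2$ over $t$; since $\eta$ has parameter only $t$ and contains $(v_1,v_2)$, the complement of $\eta(\mathbb{C}^2)$ is a constructible subset of $\mathbb{C}^2$ of Morley rank (equivalently dimension) at most $1$. Such a set can contain only finitely many of the affine lines $\{w_1+w_2=2n:n\in\mathbb{Z}\}$, since its Zariski closure is a curve with finitely many one-dimensional components. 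Hence some line $w_1+w_2=2n_0$ meets $\eta(\mathbb{C}^2)$ in a cofinite set, and I may choose $(v_1',v_2')$ on it with $v_1'+v_2'\in 2\mathbb{Z}$ and ${\mathcal U}\models\eta(v_1',v_2')$. By Fact 3.7 and Corollary 2.18(b), at such a parameter $D(v_1',v_2')$ fails condition (J) and $Y(v_1',v_2')$ contains an order $1$ definable subset.

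Finally I would close with the transcendence-degree contradiction of Proposition 3.6. From ${\mathcal U}\models\eta(v_1',v_2')$ we obtain a non-constant $a_1$ and a finite-to-finite definable relation $R$ between $Y(v_1',v_2')$ and $E_{a_1}^{\sharp}$. Let $K$ be a countable differential field of definition for all the data, let $Z_0$ be the order $1$ subset of $Y(v_1',v_2')$, let $z$ be generic in $Z_0$ over $K$ (so $tr.deg(K\langle z\rangle/K)=1$ and $z\notin K^{alg}$), and pick $(x,y)\in E_{a_1}^{\sharp}$ with $R(z,(x,y))$. Finiteness of the fibres of $R$ forces $(x,y)\notin K^{alg}$ (otherwise $z\in acl(K)=K^{alg}$), so $tr.deg(K\langle x,y\rangle/K)=2$ by strong minimality of $E_{a_1}^{\sharp}$ (Fact 2.5); but $R$ places $(x,y)\in acl(K,z)$, giving $tr.deg(K\langle x,y\rangle/K)\leq 1$, a contradiction. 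This contradiction shows that $Y(v_1,v_2)$ is geometrically trivial, completing the proof.
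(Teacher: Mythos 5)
Your proof is correct and follows essentially the same route as the paper's: strong minimality via Corollary 3.8, then the trichotomy/uniform-definability/specialization scheme copied from Proposition 3.6, ending with the same transcendence-degree contradiction against an order $1$ subset. The only (harmless) difference is in the specialization step: the paper fixes $v_{1}$ and uses that $v_{2}$ is generic over $v_{1},t$ to find a suitable $v\in v_{1}+2\mathbb{Z}$ on the one-dimensional slice, whereas you run a two-dimensional Morley rank argument on the complement of $\eta$ in $\mathbb{C}^{2}$ and intersect with the lines $w_{1}+w_{2}=2n$; both exploit the same ``infinitely many components'' phenomenon.
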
 
\begin{proof} Let $Y(v_{1},v_{2})$ denote the solution set of  $S_{III}'(v_{1},v_{2})$. Strong minimality is by Corollary 3.8. Suppose that geometric triviality fails. We copy the proof of Proposition 3.6. So $Y(v_{1},v_{2})$ is nonorthogonal to $E_{a}^{\sharp}$ for an elliptic curve $E_{a}: y^{2} = x(x-1)(x-a)$ with $a\notin {\mathbb C}$.  Using definability of the family $E_{b}^{\sharp}$, we express nonorthogonality of $Y(v_{1},v_{2})$ to some $E_{b}^{\sharp}$ by a formula $\theta(v_{1},v_{2},t)$. As  $v_{2}$ is independent from $v_{1},t$, there are cofinitely many $v\in {\mathbb C}$ such that $\theta(v_{1},v,t)$ holds. In particular we can find such $v\in v_{1} + 2{\mathbb Z}$, and we have a contradiction as in 3.6 (using now Fact 3.7)

\end{proof}

\subsection{The family $P_{IV}$.}

$P_{IV}(\alpha,\beta)$, $\alpha,\beta\in\mathbb{C}$ is given by the following equation
\begin{equation*}
y''=\frac{1}{2y}(y')^2+\frac{3}{2}y^3+4ty^2+2(t^2-\alpha)y+\frac{\beta}{y}
\end{equation*}

Following \cite{Okam3} and \cite{Umemura2} the equations can be rewritten in the form

\[S_{IV}(v_{1},v_{2},v_{3})\left\{
\begin{array}{rll}
y' &=&2xy-y^2-2ty+2(v_1-v_2)\\
x' &=&2xy-x^2+2tx+2(v_1-v_3)
\end{array}\right.\]

where $v_{1},v_{2},v_{3}$ are complex numbers satisfying $v_{1} + v_{2} + v_{3} = 0$. 
\newline
(*) Here $\alpha = 3v_{3} + 1$ and $\beta = -2(v_{2}-v_{1})^{2}$. \\

Let $\mathcal{V}$ be the plane defined by $v_{1} + v_{2} + v_{3}$. So the algebraic independence of $\alpha$ and $\beta$ corresponds to ${\bf v}$ being a generic point on $\mathcal V$. Let $D({\bf v})$ be the derivation (Hamiltonian vector field) corresponding to $S_{IV}(\bf v)$. 
Let $\mathcal{W}=\left\{{\bf v}\in\mathcal{V}:\;v_1-v_2\in\mathbb{Z}\right\}\cup \left\{{\bf v}\in\mathcal{V}:\;v_2-v_3\in\mathbb{Z}\right\}\cup \left\{{\bf v}\in\mathcal{V}:\;v_3-v_1\in\mathbb{Z}\right\}$. Then the analysis in   \cite{Umemura2}, specifically Theorem 3.2, Proposition 3.5, and Corollary 3.9 yields

\begin{fct} For ${\bf v}\in\mathcal{V}$, $D({\bf v})$ satisfies condition (J) if and only if ${\bf v}\notin\mathcal{W}$.\\
 
\end{fct}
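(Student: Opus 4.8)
\emph{Proof proposal.}
Since this is a result of Umemura recorded here for our purposes, the plan is not to reprove it from scratch but to extract it from \cite{Umemura2} in the same manner as the commentary to Fact~3.7 handled $P_{III}$. I would split the biconditional into its two implications and, in each, pass between Umemura's geometric language (invariant curves, one-parameter families of special solutions) and our language of algebraic $\partial$-subvarieties via Corollary~2.18, which identifies failure of condition~(J) with the existence of a proper positive-dimensional $\partial$-invariant plane curve of $({\mathbb A}^{2},D({\bf v}))$ (equivalently, an order-$1$ definable subset of $({\mathbb A}^{2},D({\bf v}))^{\partial}$).

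First I would treat the implication ${\bf v}\in\mathcal{W}\Rightarrow$ not-(J). By the evident symmetry of the three clauses defining $\mathcal{W}$ it suffices to consider, say, $v_{1}-v_{2}\in{\mathbb Z}$. Umemura's Theorem~3.2, together with the Backlund/shift transformations of Corollary~3.9, produces for such ${\bf v}$ a Riccati-type one-parameter family of solutions of $S_{IV}({\bf v})$; this is exactly a proper positive-dimensional algebraic $\partial$-subvariety of $({\mathbb A}^{2},D({\bf v}))$, so by Corollary~2.18 the vector field $D({\bf v})$ fails~(J). The integer shifts in Corollary~3.9 then carry this conclusion from the ``seed'' resonances (where some $v_{i}-v_{j}=0$) across all of $\mathcal{W}$.

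For the converse I would argue by contraposition: assuming $D({\bf v})$ fails~(J), Proposition~3.5 of \cite{Umemura2} forces an integral linear relation among the differences $v_{1}-v_{2},\,v_{2}-v_{3},\,v_{3}-v_{1}$, which under the constraint $v_{1}+v_{2}+v_{3}=0$ places one of them in ${\mathbb Z}$, i.e.\ ${\bf v}\in\mathcal{W}$. Combining the two implications gives not-(J) $\Leftrightarrow {\bf v}\in\mathcal{W}$, equivalently (J) $\Leftrightarrow {\bf v}\notin\mathcal{W}$. The step I expect to be the main obstacle is purely bookkeeping rather than a matter of new ideas: reconciling Umemura's parametrization and his list of resonance conditions with the present normalization $\alpha=3v_{3}+1$, $\beta=-2(v_{2}-v_{1})^{2}$, $v_{1}+v_{2}+v_{3}=0$, and verifying that the three affine conditions $v_{i}-v_{j}\in{\mathbb Z}$ exhaust the resonances his analysis produces. (Unlike the $P_{III}$ case, no rescaling-of-the-derivation subtlety intervenes, since $S_{IV}({\bf v})$ carries no $1/t$ factor, so the $\partial$-subvarieties in his setting and in ours coincide.)
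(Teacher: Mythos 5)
Your proposal matches the paper's treatment: Fact 3.10 is stated there as a direct extraction from Umemura--Watanabe, citing exactly the three results you invoke (Theorem 3.2 for the explicit invariant curves when ${\bf v}\in\mathcal{W}$, Proposition 3.5 for the converse resonance constraint, and Corollary 3.9 for propagating along integer shifts), combined with Corollary 2.18 to translate between $\partial$-subvarieties and condition (J), just as in the commentary to Fact 3.7. Your closing observation that no $t\partial$ rescaling issue arises for $S_{IV}$ is also consistent with the paper, which includes such a caveat for $P_{III}$, $P_{V}$, $P_{VI}$ but not here.
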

\begin{cor}\label{CorollaryP4}
For ${\bf v}\in\mathcal{V}$, the solution set of $P_{IV}({\bf v})$ is strongly minimal if and only if ${\bf v}\not\in\mathcal{W}$.
\end{cor}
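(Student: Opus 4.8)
The plan is to follow verbatim the template already established by Corollary 3.5 (for $P_{II}$) and Corollary 3.8 (for $P_{III}$): the statement is a purely formal consequence of the model-theoretic translation in Corollary 2.18 combined with the condition-(J) analysis recorded in Fact 3.10, and no argument beyond chaining these two results is required.

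First I would make explicit the identification between the object whose strong minimality is asserted and the $\partial$-variety to which Corollary 2.18 applies. By construction the family $P_{IV}(\alpha,\beta)$ has been rewritten, via the auxiliary variable $x$ and the substitution $\alpha = 3v_3 + 1$, $\beta = -2(v_2 - v_1)^2$, as the Hamiltonian system $S_{IV}(\mathbf{v})$, whose associated derivation $D(\mathbf{v})$ is a vector field on $V = \mathbb{A}^2$ over $F = \mathbb{C}(t)$. Thus the solution set of $P_{IV}(\mathbf{v})$ in $\mathcal{U}$ is in definable (indeed birational over $\mathbb{C}(t)$) bijection with $(\mathbb{A}^2, D(\mathbf{v}))^{\partial}$, and one should note, as for the earlier families, that such a correspondence preserves both strong minimality and the lattice of algebraic $\partial$-subvarieties.

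The remaining steps are then immediate. Observe that condition (J) for $D(\mathbf{v})$ --- namely that there is no $K \geq \mathbb{C}(t)$ and nonconstant $P(x,y)$ over $K$ with $D(\mathbf{v})(P) = GP$ for some $G$ --- is word-for-word the right-hand side of Corollary 2.18(a) with $V = \mathbb{A}^2$ and $F = \mathbb{C}(t)$. Hence Corollary 2.18(a) gives that $(\mathbb{A}^2, D(\mathbf{v}))^{\partial}$ is strongly minimal if and only if $D(\mathbf{v})$ satisfies (J). Feeding in Fact 3.10, which asserts that $D(\mathbf{v})$ satisfies (J) precisely when $\mathbf{v} \notin \mathcal{W}$, yields the chain ``strongly minimal $\iff$ (J) $\iff \mathbf{v} \notin \mathcal{W}$'', which is the claim.

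The main obstacle is not located at the level of this corollary at all: the genuine work is entirely hidden inside Fact 3.10, imported from Umemura's analysis (Theorem 3.2, Proposition 3.5 and Corollary 3.9 of the cited reference) determining exactly the parameters for which the Hamiltonian field admits an invariant curve, while the model-theoretic half, Corollary 2.18, is already in hand. The only point demanding a moment's care is the bookkeeping of the reparametrization $(\alpha,\beta)\leftrightarrow\mathbf{v}$ together with the constraint $v_1 + v_2 + v_3 = 0$: one must confirm that ``$\mathbf{v}$ generic on $\mathcal{V}$'' corresponds to ``$\alpha,\beta$ algebraically independent'', and that $\mathcal{W}$ is a genuine countable union of proper subvarieties of the plane $\mathcal{V}$ --- which is precisely what will later allow the Claim-III-style argument to be run in the proof of geometric triviality for $P_{IV}$.
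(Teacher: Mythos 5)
Your proposal is correct and is exactly the paper's (implicit) argument: the corollary is stated in the paper without further proof precisely because it is the immediate chaining of Corollary 2.18(a) (strong minimality of $(\mathbb{A}^2, D({\bf v}))^{\partial}$ is equivalent to condition (J)) with Fact 3.10 (condition (J) holds iff ${\bf v}\notin\mathcal{W}$), following the same template as Corollaries 3.5 and 3.8. Your additional remarks on the reparametrization and on $\mathcal{W}$ being an infinite union of proper subvarieties are accurate and anticipate correctly how the corollary is used in Proposition 3.12.
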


\begin{prop} Let $Y(\alpha, \beta)$ denote the solution set of $P_{IV}(\alpha,\beta)$. Suppose $\alpha, \beta \in {\mathbb C}$ are algebraically independent. Then $Y(\alpha,\beta)$ is strongly minimal and geometrically trivial.
\end{prop}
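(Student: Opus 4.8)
The plan is to copy the proof of Proposition 3.6 almost verbatim, since modulo the linear constraint $v_1+v_2+v_3=0$ the family $P_{IV}$ is a two-parameter family just like $P_{III}$, with bad locus $\mathcal{W}$ again a countable union of lines (hence with infinitely many components). For strong minimality: as $\alpha,\beta$ are algebraically independent, $\mathbf{v}$ is a generic point of $\mathcal{V}$, and $\mathcal{W}$, being a countable union of proper lines in $\mathcal{V}$, contains no generic point; so $\mathbf{v}\notin\mathcal{W}$ and Corollary 3.11 gives strong minimality of $Y(\alpha,\beta)$. Now suppose geometric triviality fails. Then exactly as in Proposition 3.6, $Y(\alpha,\beta)$ has order $2$, hence is orthogonal to $\mathbb{C}$, so by the trichotomy theorem (Theorem 2.8) it is nonorthogonal to $A^{\sharp}$ for a simple abelian variety $A$ of $\mathbb{C}$-trace $0$; the order and dimension bounds of Fact 2.5 force $A$ to be an elliptic curve $E_a$ with $a\notin\mathbb{C}$.

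Next I would make the nonorthogonality uniform in the parameters. Applying Lemma 2.25 to the Legendre family $\{E_b : y^2=x(x-1)(x-b)\}$ and arguing as in Claims II and III of Proposition 3.6, I would obtain a single $L$-formula $\theta(v_1,v_2,v_3,t)$ (with the extra parameter $t$) expressing that $\mathbf{v}\in\mathcal{V}$ and that $Y(\mathbf{v})$ is nonorthogonal to $E_b^{\sharp}$ for some $b\notin\mathbb{C}$, and with $\mathcal{U}\models\theta(\mathbf{v},t)$. Writing $v_1=-v_2-v_3$, the coordinates $v_2,v_3$ are algebraically independent constants, and $v_3\notin acl(v_2,t)$ since $v_2,v_3,t$ are algebraically independent over $\mathbb{Q}$ while $acl(v_2,t)=\mathbb{Q}(v_2,t)^{alg}$. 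Holding $v_2$ fixed and varying the last coordinate, strong minimality of $\mathbb{C}$ forces the subset of $\mathbb{C}$ defined by $u\mapsto\theta(-v_2-u,v_2,u,t)$ to be cofinite, because it contains the non-algebraic point $v_3$.

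Finally I would descend into $\mathcal{W}$ and conclude as in 3.6. The coset $v_2+\mathbb{Z}$ is infinite, so it meets the cofinite set just described: there is $v_3'\in v_2+\mathbb{Z}$ with $\theta(-v_2-v_3',v_2,v_3',t)$. Put $\mathbf{v}'=(-v_2-v_3',v_2,v_3')$; then $v_2-v_3'\in\mathbb{Z}$, so $\mathbf{v}'\in\mathcal{W}$, and by Fact 3.10 the vector field $D(\mathbf{v}')$ fails condition (J), so Corollary 2.18 provides an order $1$ definable subset $Z$ of $Y(\mathbf{v}')$. On the other hand $\theta(\mathbf{v}',t)$ asserts that $Y(\mathbf{v}')$ is nonorthogonal to some $E_{a_1}^{\sharp}$ with $a_1\notin\mathbb{C}$, which by Fact 2.5 is strongly minimal of order $2$. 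Choosing, over a countable field $K$ of definition, a generic $z\in Z$ and a related $(x,y)\in E_{a_1}^{\sharp}$, we get $tr.deg(K\langle z\rangle/K)=1$ but $(x,y)\in acl(K,z)$ with $tr.deg(K\langle x,y\rangle/K)=2$, the required contradiction.

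I expect the only genuine subtlety to be the independence bookkeeping of the third paragraph: verifying that after fixing $v_2$ the remaining free coordinate is independent from $v_2$ and $t$ (so that strong minimality of $\mathbb{C}$ applies), and that a component $v_2+\mathbb{Z}$ of $\mathcal{W}$ really meets the cofinite locus on which nonorthogonality persists. This is precisely where the emphasized fact that $\mathcal{W}$ has infinitely many components is used, and it is the hinge of the whole argument.
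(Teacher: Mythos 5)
Your proposal is correct and follows essentially the same route as the paper: reduce to $S_{IV}(\mathbf{v})$, get strong minimality from Corollary 3.11 since a generic $\mathbf{v}\in\mathcal{V}$ avoids the countable union of lines $\mathcal{W}$, express the hypothetical nonorthogonality to some $E_b^{\sharp}$ by a formula in the parameters via Lemma 2.25, use independence of one free coordinate from the other and from $t$ to push the formula onto cofinitely many constants, and then land in a coset component of $\mathcal{W}$ to produce an order-$1$ subset clashing with an order-$2$ strongly minimal $E_{a_1}^{\sharp}$. The only (immaterial) differences are that the paper parametrizes by $(v_1,v_2)$ and perturbs $v_2$ into $v_1+\mathbb{Z}$, whereas you perturb $v_3$ into $v_2+\mathbb{Z}$, and that you spell out the final step through Fact 3.10 and Corollary 2.18 where the paper simply cites Corollary 3.11.
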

\begin{proof} Let $(v_{1},v_{2},v_{3}) \in {\mathcal V}$ correspond to $(\alpha,\beta)$ as in (*) above. So $v_{1}$ and $v_{2}$ are algebraically independent and $v_{3} = -v_{1} - v_{2}$. It suffices to work with the solution set $Y({\bf v})$ of $S_{IV}({\bf v})$. Strong minimality is by Corollary 3.11. Again if geometric triviality fails this is witnessed by the truth of a formula $\theta(v_{1},v_{2},t)$, where $\theta(u,w,t)$ expresses  
the existence of a differential algebraic correspondence between $Y(u,w, -u-w)$ and some $E_{b}^{\sharp}$ with $b\notin {\mathbb C}$. The independence of $v_{2}$ over $v_{1},t$ implies that $\models \theta(v_{1},w,t)$ for all but finitely many $w \in {\mathbb C}$. So we can find such $w\in v_{1} + {\mathbb Z}$ giving a contradiction to Corollary 3.11.

\end{proof}

\subsection{The family $P_{V}$}
We recall that $P_{V}(\alpha,\beta,\gamma,\delta)$, $\alpha,\beta,\gamma,\delta \in \mathbb{C}$, is given by the following equation
\begin{equation*}
y'' =\left(\frac{1}{2y}+\frac{1}{y-1}\right)(y')^2-\frac{1}{t}y'+\frac{(y-1)^2}{t^2}\left(\alpha y+\frac{\beta}{y}\right)+\gamma\frac{y}{t}+\delta\frac{y(y+1)}{y-1} 
\end{equation*}

The situation is similar to the case of $P_{III}$ above, in that, in the light of certain transformations preserving the equation, $P_{V}$ can be written as a  $3$-parameter family. 
The analysis is carried out by Okamoto\cite{Okam2} and then Watanabe\cite{Watanabe} and again we give a summary:

First $P_{V}(\alpha,\beta, \gamma, \delta)$ can be rewritten as the Hamiltonian system  

\[ y' = \frac{\partial H}{\partial x} \\, 
x'  = \frac{\partial H}{\partial y} \] 

 where the Hamiltonian polynomial is

\[ H_{V}(\kappa_0,\kappa_1,\theta,\eta):\;\;\; H=\frac{1}{t}\left[y(y-1)^2x^2-\left(\kappa_0(y-1)^2+\theta y(y-1)+\eta 
ty\right)x+\kappa(y-1)\right] \]
and 
\[\alpha=\frac{1}{2}\kappa_1^2,\;\;\; \beta=-\frac{1}{2}\kappa_0^2,\;\;\;\gamma=-\eta(\theta+1),\;\;\;\delta=-\frac{1}{2}\eta^2,\;\;\;\kappa=\frac{1}{4}(\kappa_0+\theta)^2-\frac{1}{4}\kappa_1^2.\]

Okamoto points out that the transformation $(y,x,H,t) \to (y,x,\lambda H, \lambda^{-1}t)$ and $\eta \to \lambda^{-1}\eta$ ``preserves the system" for any nonzero $\lambda$. As we may assume $\delta\neq 0$ (so also $\eta\neq 0$) we may choose $\lambda = \eta$, and hence the transformation takes $\delta$ to $-1/2$ (and $\eta$ to $1$).  Hence we need only consider the system 
$P_V(\alpha,\beta,\gamma,-\frac{1}{2})$, and prove that for $\alpha, \beta, \gamma$ algebraically independent, the solution set of $P_{V}(\alpha, \beta, \gamma, -\frac{1}{2})$ is strongly minimal and geometrically trivial. \\

Let us now define: 
\[v_1=-\frac{1}{4}(2\kappa_0+\theta),\;\;\;v_2=\frac{1}{4}(2\kappa_0-\theta),\;\;\;v_3=\frac{1}{4}(2\kappa_1+\theta),\;\;\;v_4=-\frac{1}{4}(2\kappa_1-\theta),\]
then the vector ${\bf v}=(v_1,v_2,v_3,v_4)$ is on the complex hyperplane $\mathcal{V}$ in $\mathbb{C}^4$ defined by $v_1+v_2+v_3+v_4=0$. So our family is now parameterized by $\mathcal{V}$ \\

Secondly make the following substitution: replace $y(y-1)^{-1}$ by $y$, and $-y(y-1)^{2}x + (v_{3}-v_{1})(y-1)$.\\

Then our system can be written as:  


\[S_{V}({\bf v})\left\{
\begin{array}{rll}
y' &=&\frac{1}{t}(2y^2x-2yx+ty^2-ty+(v_1-v_2-v_3+v_4)y+v_2-v_1)\\

x'&=& \frac{1}{t}(-2yx^2+x^2-2txy+tx-(v_1-v_2-v_3+v_4)x+(v_3-v_1)t)

\end{array}\right.\]

and note that $\alpha = \frac{1}{2}(v_3-v_4)^2$,
$\beta =-\frac{1}{2}(v_2-v_1)^2$ and 
$\gamma = 2v_1+2v_2-1$.
\newline
Let $D({\bf v})$ the corresponding (Hamiltonian) vector field $\partial +  
(\frac{1}{t}(2x^2x-2yx+ty^2-ty+(v_1-v_2-v_3+v_4)y+v_2-v_1))\frac{\partial}{\partial y} + 
(\frac{1}{t}(-2yx^2+x^2-2txy+tx-(v_1-v_2-v_3+v_4)x+(v_3-v_1)t)\frac{\partial}{\partial x}$\\


Hence we want to prove that for generic ${\bf v}\in \mathcal V$, $S_{V}({\bf v})$ is strongly minimal and geometrically trivial. 

Consider the union of  lines in $\mathcal{V}$ given by 
\begin{eqnarray*}
\mathcal{W}&=\left\{v\in\mathcal{V}:\;v_1-v_2\in\mathbb{Z}\right\}\cup \left\{v\in\mathcal{V}:\;v_1-v_3\in\mathbb{Z}\right\}\cup \left\{v\in\mathcal{V}:\;v_1-v_4\in\mathbb{Z}\right\}\\
&\cup \left\{v\in\mathcal{V}:\;v_1-v_3\in\mathbb{Z}\right\}\cup \left\{v\in\mathcal{V}:\;v_2-v_4\in\mathbb{Z}\right\}\cup \left\{v\in\mathcal{V}:\;v_3-v_4\in\mathbb{Z}\right\},
\end{eqnarray*}
The following is proved in  \cite{Watanabe} (Theorem 1.2, Proposition 2.1, and Corollary 2.6):

\begin{fct} For ${\bf v}\in\mathcal{V}$, $D({\bf v})$ satisfies condition (J) if and only if ${\bf v}\notin\mathcal{W}$. 
\end{fct}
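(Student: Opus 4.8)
Since Fact 3.13 is quoted from \cite{Watanabe}, the plan is not to reprove it from scratch but to extract it from the three cited results---Theorem 1.2, Proposition 2.1, and Corollary 2.6---in exactly the manner of the commentary following Fact 3.7. The two directions of the equivalence come from complementary halves of Watanabe's analysis, and the bridge to our language is always Corollary 2.18: $D({\bf v})$ satisfies condition (J) if and only if $({\mathbb A}^2, D({\bf v}))$ has no $1$-dimensional algebraic $\partial$-subvariety.

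First I would treat the direction ${\bf v}\in\mathcal{W} \Rightarrow \neg(\mathrm{J})$. Here Theorem 1.2 of \cite{Watanabe} should produce, whenever one of the pairwise differences $v_i-v_j$ lies in $\mathbb{Z}$, explicit classical (Riccati- or algebraic-type) special solutions of $S_{V}({\bf v})$, equivalently proper invariant curves of the Hamiltonian vector field. Any such curve is a $1$-dimensional $\partial$-subvariety, so by Corollary 2.18 condition (J) fails on every line making up $\mathcal{W}$.

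Next I would treat the converse ${\bf v}\notin\mathcal{W} \Rightarrow (\mathrm{J})$, which is the substantive half. The idea is that Proposition 2.1 together with Corollary 2.6 shows that an invariant curve can exist only when the shifted parameters satisfy a $\mathbb{Z}$-linear relation of precisely the form cutting out one of the lines of $\mathcal{W}$; contrapositively, ${\bf v}\notin\mathcal{W}$ forces condition (J). As in the $P_{III}$ case I would note that Watanabe works with the derivation $t\partial$ rather than $\partial$, but invariant curves for the two $\partial$-structures coincide, so this discrepancy is harmless; likewise the earlier normalisation sending $\delta$ to $-\tfrac12$ must be checked to leave the integrality conditions defining $\mathcal{W}$ unchanged.

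The main---indeed, essentially the only---obstacle is bookkeeping: matching Watanabe's parameter conventions ($\kappa_0,\kappa_1,\theta,\eta$ and the intermediate Hamiltonian $H_V$) to the coordinates ${\bf v}=(v_1,v_2,v_3,v_4)$ on $\mathcal{V}$, and verifying that the difference conditions cutting out $\mathcal{W}$ are exactly the image of Watanabe's relations under the substitution used to pass to $S_{V}({\bf v})$. No new differential-algebraic content beyond Corollary 2.18 is required, so this is a translation rather than a proof, and I would present it as a short commentary of the same kind given for Fact 3.7.
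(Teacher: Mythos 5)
Your proposal is correct and matches the paper's treatment: Fact 3.13 is likewise obtained by citing Watanabe's Theorem 1.2, Proposition 2.1 and Corollary 2.6, translating via Corollary 2.18 (condition (J) $\Leftrightarrow$ no invariant curve), and noting as in the $P_{III}$ case that Watanabe's use of the derivation $t\partial$ in place of $\partial$ is harmless since the two $\partial$-structures have the same invariant subvarieties. Your extra remark about checking that the normalisation $\delta\mapsto-\tfrac12$ preserves the integrality conditions defining $\mathcal{W}$ is sensible bookkeeping that the paper leaves implicit.
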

{\em Commentary.} Strictly speaking Watanabe considers rather the vector field corresponding to the $\partial_{1}$-variety structure on 
${\mathbb A}^{2}$:
\[\left\{
\begin{array}{rll}
\partial_{1}(y) &=& 2x^2x-2yx+ty^2-ty+(v_1-v_2-v_3+v_4)y+v_2-v_1)\\
\partial_{1}(x) &=& -2yx^2+x^2-2txy+tx-(v_1-v_2-v_3+v_4)x+(v_3-v_1)t)
\end{array}\right.\]
where $\partial_{1} = t\partial$.

But as in the case of $P_{III}$ above, this suffices.

\begin{cor}\label{CorollaryP5}
For ${\bf v}\in\mathcal{V}$, the solution set of $S_{V}({\bf v})$  is strongly minimal if and only if ${\bf v}\not\in\mathcal{W}$.
\end{cor}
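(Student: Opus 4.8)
The plan is to observe that the statement is a purely formal consequence of the apparatus already assembled, in exact parallel with Corollaries 3.5, 3.8 and 3.11. By construction the solution set of $S_{V}({\bf v})$ in ${\mathcal U}$ is the finite-dimensional differential algebraic variety $({\mathbb A}^{2}, D({\bf v}))^{\partial}$, where $D({\bf v})$ is the Hamiltonian vector field written down above, regarded as a derivation of $F[x,y]$ extending $\partial$ for $F = {\mathbb C}(t)\langle {\bf v}\rangle$. So the problem is to decide exactly when this variety is strongly minimal.

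First I would apply Corollary 2.18(a) in the case $V = {\mathbb A}^{2}$: it says that $({\mathbb A}^{2}, D({\bf v}))^{\partial}$ is strongly minimal if and only if there is no $K \geq F$ and no nonconstant $P(x,y)$ over $K$ with $D({\bf v})(P) = GP$ for some polynomial $G$ over $K$. By Remark 2.19 this right-hand condition is verbatim Umemura's condition (J) on $D({\bf v})$. Hence strong minimality of the solution set is equivalent to $D({\bf v})$ satisfying (J). The second and last step is simply to quote Fact 3.13, which records (from Watanabe) that $D({\bf v})$ satisfies (J) if and only if ${\bf v}\notin\mathcal{W}$. Chaining the two equivalences yields precisely the claim.

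There is essentially no obstacle remaining at this point, since all of the genuine content has been absorbed into Fact 3.13 --- the Okamoto--Watanabe determination of exactly which parameters force the Hamiltonian vector field to admit an invariant algebraic curve --- while the passage between ``invariant curve / condition (J)'' and ``strong minimality'' is the general Corollary 2.18. The one bookkeeping point I would flag, as already noted in the Commentary following Fact 3.13, is that Watanabe works with the rescaled derivation $\partial_{1} = t\partial$ rather than with $\partial$. Since multiplying the vector field by the nonzero function $t$ leaves unchanged which subvarieties are invariant, the algebraic $\partial$-subvarieties of $({\mathbb A}^{2}, D({\bf v}))$ coincide with the $\partial_{1}$-subvarieties of the rescaled structure; condition (J) is therefore insensitive to this rescaling, so Fact 3.13 transfers directly and the appeal to Corollary 2.18 is legitimate.
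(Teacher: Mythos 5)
Your proof is correct and is exactly the route the paper takes: Corollary 2.18 (with Remark 2.19 identifying its right-hand side as condition (J)) combined with Fact 3.13, together with the observation from the Commentary that rescaling the derivation by $t$ does not change which subvarieties are invariant. The paper leaves this chain implicit, as it did for Corollaries 3.5, 3.8 and 3.11, so nothing is missing from your account.
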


We conclude, as previously:
\begin{prop} For $v_{1}, v_{2}, v_{3} \in {\mathbb C}$ algebraically independent, the solution set of $S_{V}({\bf v})$ is strongly minimal and 
geometrically trivial. Hence by the reductions above, for algebraically independent $\alpha, \beta, \gamma, \delta \in {\mathbb C}$ the solution set of $P_{V}(\alpha, \beta, \gamma, \delta)$ is strongly minimal and geometrically trivial.

\end{prop}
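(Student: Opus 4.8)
The plan is to imitate the proof of Proposition 3.6 (the $P_{II}$ case) essentially verbatim, which is why that proof was advertised as "the model for all subsequent proofs." First I would reduce to the $3$-parameter system $S_{V}(\mathbf{v})$: by the reductions preceding the statement, algebraic independence of $\alpha,\beta,\gamma,\delta$ translates (after the transformation normalizing $\delta = -1/2$, i.e. $\eta = 1$) into the algebraic independence of $v_{1},v_{2},v_{3}$, with $v_{4} = -v_{1}-v_{2}-v_{3}$, so $\mathbf{v}$ is a generic point of the hyperplane $\mathcal{V}$. Strong minimality is then immediate from Corollary 3.14 (\ref{CorollaryP5}): since $v_{1},v_{2},v_{3}$ are algebraically independent, none of the differences $v_{i}-v_{j}$ lies in $\mathbb{Z}$, so $\mathbf{v}\notin\mathcal{W}$ and $D(\mathbf{v})$ satisfies condition (J), whence $S_{V}(\mathbf{v})$ is strongly minimal of order $2$.

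For geometric triviality I would argue by contradiction. Suppose $Y(\mathbf{v})$, the solution set of $S_{V}(\mathbf{v})$, is not geometrically trivial. Since $\mathrm{ord}(Y(\mathbf{v})) = 2 > 1 = \mathrm{ord}(\mathbb{C})$, by Remark 2.7 $Y(\mathbf{v})$ is orthogonal to the constants, so Type (I) of the trichotomy (Theorem 2.8) is excluded. Hence $Y(\mathbf{v})$ must be nonorthogonal to $A^{\sharp}$ for a simple abelian variety $A$ with $\mathbb{C}$-trace $0$. Exactly as in Claim I of Proposition 3.6, nonorthogonality forces $\mathrm{ord}(A^{\sharp}) = 2$, so $\dim(A)\le 2$; if $\dim(A) = 2$ then by Fact 2.5(ii) $A$ descends to $\mathbb{C}$, contradicting $\mathbb{C}$-trace $0$, so $\dim(A) = 1$ and $A$ is an elliptic curve, which we may put in Legendre form $E_{a}: y^{2} = x(x-1)(x-a)$ with $a\notin\mathbb{C}$.

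The crux is the uniform-definability/specialization argument. Applying Lemma 2.25 to the Legendre family $\{E_{b}\}$ gives a single $L$-formula $\psi(x,y,b)$ defining $E_{b}^{\sharp}$ uniformly in $b$. I then package the existence of a finite-to-finite correspondence between $Y(\mathbf{v})$ and some $E_{b}^{\sharp}$ (with $b\notin\mathbb{C}$) into a formula $\theta(v_{1},v_{2},v_{3},t)$ over the parameter $t$, exactly as Claim II and the formula $\eta$ are produced in Proposition 3.6 (and as sketched more briefly in the $P_{III}$ and $P_{IV}$ proofs). Since $\theta$ is over $t$ and $v_{3}$ is independent from $v_{1},v_{2},t$ over $\emptyset$, strong minimality of the constant field $\mathbb{C}$ forces $\theta(v_{1},v_{2},w,t)$ to hold for all but finitely many $w\in\mathbb{C}$ (otherwise $v_{3}\in\mathrm{acl}(v_{1},v_{2},t)$, contradicting independence). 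Crucially, the exceptional set $\mathcal{W}$ for $P_{V}$ consists of \emph{infinitely many} lines (a whole $\mathbb{Z}$-worth of translates), so among the cofinitely many good $w$ I can choose one with $w - v_{1}\in\mathbb{Z}$ (or any of the other six difference conditions), i.e. with $(v_{1},v_{2},w,v_{4}')\in\mathcal{W}$. For this choice Corollary 3.14 says the corresponding solution set is \emph{not} strongly minimal and hence (by Corollary 2.18) contains an order-$1$ definable subset $Z$. Taking a generic $z\in Z$ over a field $K$ of definition gives $\mathrm{tr.deg}(K\langle z\rangle/K) = 1$, while any $E_{b}^{\sharp}$-point $(x,y)$ corresponding to $z$ under the finite-to-finite relation satisfies $\mathrm{tr.deg}(K\langle x,y\rangle/K) = 2$ by strong minimality of $E_{b}^{\sharp}$; but the correspondence forces $(x,y)\in\mathrm{acl}(K,z) = K\langle z\rangle^{alg}$, which is absurd. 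This contradiction establishes geometric triviality.

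The main obstacle, as in all of these proofs, is not any single step but ensuring the specialization/definability machinery transfers cleanly: specifically that $\theta$ is genuinely an $L$-formula over $t$ alone (which rests on Lemma 2.25 supplying uniform definability of $E_{b}^{\sharp}$), and that the geometry of the non-strongly-minimal locus $\mathcal{W}$ really does meet every generic line in the $v_{3}$-direction. The latter is exactly the point flagged in the introduction as "crucial" — that for each family $P_{II}$–$P_{VI}$ the bad locus has infinitely many components — and it is what lets the cofinite set of good specializations intersect $\mathcal{W}$. For $P_{V}$ this is clear since fixing $v_{1},v_{2}$ and varying $v_{3}$ (with $v_{4}$ determined) sweeps out infinitely many of the lines $v_{1}-v_{3}\in\mathbb{Z}$, $v_{3}-v_{4}\in\mathbb{Z}$, etc., so the intersection is nonempty and the argument closes.
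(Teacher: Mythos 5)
Your proposal is correct and is essentially identical to the paper's argument: the paper simply says ``we conclude, as previously,'' meaning the proof of Proposition 3.6 is repeated with Corollary 3.14 in place of Corollary 3.5, the specialization now being in the $v_3$-direction so as to land on one of the infinitely many lines of $\mathcal{W}$. Your write-up fills in exactly the intended details, including the correct observation that algebraic independence of $v_1,v_2,v_3$ rules out all the integrality conditions defining $\mathcal{W}$ (including those involving $v_4=-v_1-v_2-v_3$).
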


\subsection{The family $P_{VI}$}

Recall that $P_{VI}(\alpha,\beta,\gamma,\delta)$, $\alpha,\beta,\gamma,\delta \in \mathbb{C}$, is given by the following equation
\begin{equation*}
\begin{split}
y''=&\frac{1}{2}\left(\frac{1}{y}+\frac{1}{y+1}+\frac{1}{y-t}\right)(y')^2-\left(\frac{1}{t}+\frac{1}{t-1}+\frac{1}{y-t}\right)y'\\
&+\frac{y(y-1)(y-t)}{t^2(t-1)^2}\left(\alpha+\beta\frac{t}{y^2}+\gamma\frac{t-1}{(y-1)^2}+\delta\frac{t(t-1)}{(y-t)^2}\right)
\end{split}
\end{equation*}

The equation is studied by Okamoto \cite{Okam1} followed by Watanabe \cite{Watanabe1}. Again we summarise what we need. 


Given $\alpha, \beta, \gamma, \delta\in {\mathbb C}$ let $a_{1}, a_{2}, a_{3}, a_{4}$ be complex numbers such that 
$\alpha =\frac{1}{2}(a_1-a_2)^2$, 
$\beta = -\frac{1}{2}(a_3+a_4)^2$, 
$\gamma = \frac{1}{2}(a_3-a_4)^2$, and
$\delta = -\frac{1}{2}(1-(1-a_1-a_2)^2)$. 

Then the equation $P_{VI}(\alpha, \beta, \gamma, \delta)$ can be written as the (Hamiltonian) system:

\[S_{VI}({\bf a})\left\{
\begin{array}{rll}
y' &=& \frac{1}{t(t-1)}(2y(y-1)(y-t)x+(a_1+a_2-2a_3)y^2\\
& &+(2a_3t-a_1-a_2+a_3+a_4)y-(a_3+a_4)t)\\
x'&=& \frac{1}{t(t-1)}(-3y^2x^2+2(1+t)yx^2-tx^2-2(a_1+a_2-2a_3)yx\\
& &-(2a_3t-a_1-a_2+a_3+a_4)x-(a_3-a_2)(a_3-a_1))

\end{array}\right.\]

and where $D({\bf a}) = \partial + (\frac{1}{t(t-1)}(2y(y-1)(y-t)x+(a_1+a_2-2a_3)y^2 +(2a_3t-a_1-a_2+a_3+a_4)y-(a_3+a_4)t))\frac{\partial}{\partial y} + (\frac{1}{t(t-1)}(-3y^2x^2+2(1+t)yx^2-tx^2-2(a_1+a_2-2a_3)yx
-(2a_3t-a_1-a_2+a_3+a_4)x-(a_3-a_2)(a_3-a_1))\frac{\partial}{\partial x}$ is the derivation giving the corresponding $\partial$-variety structure on ${\mathbb A}^{2}$. \\

Let ${\mathcal M}$ be the collection of $(a_{1},a_{2},a_{3},a_{4}) \in {\mathbb C}^{4}$ such that for some $1\leq i < j \leq 4$, $\pm a_{i} \pm a_{j} \in {\mathbb Z}$. 

Then the following is proved in \cite{Watanabe1} (Theorem 2.2, Proposition 3.1, and Corollary 3.7). 

\begin{fct} For ${\bf a} \in {\mathbb C}^{4}$, $D({\bf a})$ satisfies condition (J) if and only if ${\bf a}\notin {\mathcal M}$.
\end{fct}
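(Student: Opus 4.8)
The plan is to extract Fact 3.16 from the three cited results of Watanabe in exactly the manner used above for $P_{III}$ (Fact 3.7) and $P_{V}$ (Fact 3.13), translating his statements about invariant subvarieties of the system $S_{VI}({\bf a})$ into statements about condition (J) via Corollary 2.18. First I would dispose of the bookkeeping issue that, because of the factor $\frac{1}{t(t-1)}$ appearing in $D({\bf a})$, Watanabe works not with $\partial$ but with the derivation $\partial_{1} = t(t-1)\partial$ (with respect to which $\mathcal U$ is again saturated and differentially closed). As in the commentaries on $P_{III}$ and $P_{V}$, the $\partial$-subvarieties of the original structure and the $\partial_{1}$-subvarieties of Watanabe's structure coincide, so condition (J) is insensitive to this change and his results may be quoted directly.

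Next comes the forward implication, that ${\bf a}\in{\mathcal M}$ forces failure of (J). Here I expect Watanabe's Theorem 2.2 to exhibit, for each of the finitely many root-type conditions $\pm a_{i}\pm a_{j}\in{\mathbb Z}$, a proper positive-dimensional (indeed order $1$) differential algebraic subvariety of the solution set of $S_{VI}({\bf a})$, typically a Riccati-type invariant curve. By Corollary 2.18 the presence of any order $1$ definable subset, equivalently of a $1$-dimensional algebraic $\partial$-subvariety, is exactly the negation of condition (J); hence ${\bf a}\in{\mathcal M}$ implies that (J) fails.

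For the converse, that failure of (J) forces ${\bf a}\in{\mathcal M}$, I would use Watanabe's Proposition 3.1 together with Corollary 3.7. The key structural input is that the B\"acklund transformations of $P_{VI}$ generate the affine Weyl group of type $D_{4}^{(1)}$, acting on the parameter vector $(a_{1},a_{2},a_{3},a_{4})$ by the affine reflections in the hyperplanes $\pm a_{i}\pm a_{j}\in{\mathbb Z}$, these being precisely the reflection hyperplanes of the roots $\pm e_{i}\pm e_{j}$ of $D_{4}$. Proposition 3.1 should give that an invariant curve can exist only if some integral relation realized by a root holds among the $a_{i}$, and Corollary 3.7 should close the loop by showing, after moving ${\bf a}$ into a fundamental domain for the affine Weyl group, that the only surviving invariant curves are the seeds sitting on the walls $\pm a_{i}\pm a_{j}\in{\mathbb Z}$. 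Combining the two implications yields that $D({\bf a})$ satisfies (J) if and only if ${\bf a}\notin{\mathcal M}$.

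The main obstacle I anticipate is one of translation rather than of new mathematics: one must check that Watanabe's own parametrization (in the $\partial_{1}$-picture, and with his normalization of the $a_{i}$ and of the roots) lines up exactly with the conditions defining ${\mathcal M}$, and in particular that the integral relations produced by his Proposition 3.1 are neither more nor fewer than the $\pm a_{i}\pm a_{j}\in{\mathbb Z}$. Since the relevant root system is $D_{4}$, whose roots are exactly the $\pm e_{i}\pm e_{j}$, this matching should be clean; the one point requiring genuine attention is verifying that no extra conditions (of the kind associated with long roots in other root systems) intrude.
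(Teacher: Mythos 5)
Your proposal matches the paper's treatment: Fact 3.16 is established there exactly by quoting Watanabe's Theorem 2.2, Proposition 3.1 and Corollary 3.7, translating invariant subvarieties into the negation of condition (J) via Corollary 2.18, and noting (as in the $P_{III}$ and $P_{V}$ commentaries) that the passage to the rescaled derivation $t(t-1)\partial$ is harmless because $\partial$-subvarieties and $\partial_{1}$-subvarieties coincide. The extra detail you supply about the $D_{4}^{(1)}$ Weyl group action is consistent background but not needed; the paper's own ``proof'' is precisely the citation plus the rescaling remark.
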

{\em Commentary.}  Again Watanabe works instead with  $t(t-1)D({\bf a})$, but it suffices.

\begin{cor} The solution set of $S_{VI}({\bf a})$ is strongly minimal if and only if ${\bf a}\notin {\mathcal M}$.
\end{cor}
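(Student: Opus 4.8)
The plan is to combine the preceding Fact 3.17 with Corollary 2.18, exactly as in the proofs of the analogous statements for $P_{I}$--$P_{V}$ (Corollaries 3.5, 3.8, 3.11, 3.14). First I would observe that the solution set of $S_{VI}({\bf a})$ in ${\mathcal U}$ is, by the discussion of subsection 2.2 (cf. the identification made at the end of the $P_{I}$ subsection), precisely the differential algebraic variety $({\mathbb A}^{2}, D({\bf a}))^{\partial}$, where $D({\bf a})$ is the Hamiltonian derivation displayed above. Since the $a_{i}$ are elements of the constant field ${\mathbb C} = {\mathcal C}_{\mathcal U}$, the coefficients of $D({\bf a})$ lie in ${\mathbb C}(t)$, so $({\mathbb A}^{2}, D({\bf a}))$ is an algebraic $\partial$-variety over $F = {\mathbb C}(t)$ and Corollary 2.18 applies with this $F$.

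Next I would apply Corollary 2.18(a) directly: $({\mathbb A}^{2}, D({\bf a}))^{\partial}$ is strongly minimal if and only if there is no $K \geq {\mathbb C}(t)$ and nonconstant $P(x,y)$ over $K$ with $D({\bf a})(P) = GP$ for some polynomial $G(x,y)$ over $K$, which is exactly condition (J) for the vector field $D({\bf a})$ as recalled at the start of this section. Fact 3.17 identifies precisely when this holds, namely when ${\bf a} \notin {\mathcal M}$. Chaining the two equivalences yields that the solution set of $S_{VI}({\bf a})$ is strongly minimal if and only if ${\bf a} \notin {\mathcal M}$, as desired.

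At this level there is no genuine obstacle: the corollary is a formal consequence of two results already in hand, and the real content is buried in Fact 3.17 (extracted from \cite{Watanabe1}) and in Corollary 2.18 (which rests on Fact 2.16 and quantifier elimination for $DCF_{0}$). The one point that must be checked is that the two occurrences of ``condition (J)'' agree: Corollary 2.18 quantifies over $K \geq F = {\mathbb C}(t)$, whereas Fact 3.17 is, by the commentary following it, really stated for the rescaled derivation $t(t-1)D({\bf a})$. Multiplication of the vector field by the nonzero rational function $t(t-1)$ does not change which prime ideals it preserves, so the irreducible positive-dimensional algebraic $\partial$-subvarieties---and hence the validity of condition (J)---are the same for $D({\bf a})$ and for $t(t-1)D({\bf a})$; this is the identical reduction already invoked for $P_{III}$ and $P_{V}$. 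With that matching in place the equivalence is immediate.
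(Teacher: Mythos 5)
Your proof is correct and is exactly the argument the paper intends: the corollary is stated there without proof, as an immediate combination of the preceding Fact (condition (J) for $D({\bf a})$ holds iff ${\bf a}\notin{\mathcal M}$) with Corollary 2.18, together with the commentary's remark that Watanabe's use of the rescaled derivation $t(t-1)D({\bf a})$ ``suffices.'' Your explicit observation that multiplying the vector field by a nonzero rational function does not change which ideals are invariant, hence does not affect condition (J), is precisely the justification the paper leaves implicit (and already invoked for $P_{III}$ and $P_{V}$).
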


\begin{prop} If $\alpha, \beta, \gamma, \delta \in {\mathbb C}$ are algebraically independent (and transcendental), then the solution set of $P_{VI}(\alpha, \beta, \gamma, \delta)$ is strongly minimal and geometrically trivial.
\end{prop}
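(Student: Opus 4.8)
The plan is to run the argument of Propositions 3.6, 3.9, 3.12, and 3.15 essentially unchanged, working throughout with the system $S_{VI}(\mathbf{a})$ in the parameter $\mathbf{a} = (a_1,a_2,a_3,a_4)$ rather than directly with $(\alpha,\beta,\gamma,\delta)$. First I would observe that the displayed relations make $\mathbf{a}$ and $(\alpha,\beta,\gamma,\delta)$ interalgebraic: $\alpha$ and $\delta$ recover $a_1-a_2$ and $a_1+a_2$ (each only up to a square root), while $\beta$ and $\gamma$ recover $a_3+a_4$ and $a_3-a_4$, so $\mathbb{Q}(\alpha,\beta,\gamma,\delta)$ and $\mathbb{Q}(a_1,a_2,a_3,a_4)$ share an algebraic closure and hence have the same transcendence degree. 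Thus $\alpha,\beta,\gamma,\delta$ algebraically independent is equivalent to $a_1,a_2,a_3,a_4$ algebraically independent, and in that case no relation $\pm a_i \pm a_j \in \mathbb{Z}$ can hold, so $\mathbf{a} \notin \mathcal{M}$ and strong minimality of $Y(\mathbf{a})$ is immediate from Corollary 3.17.

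For geometric triviality I argue by contradiction. Since $ord(Y(\mathbf{a})) = 2$, the set $Y(\mathbf{a})$ is orthogonal to $\mathbb{C}$ (Remark 2.7), so failure of geometric triviality forces, via Theorem 2.8, nonorthogonality to $A^{\sharp}$ for a simple abelian variety $A$ of $\mathbb{C}$-trace $0$; the order-count of Claim I in Proposition 3.6 again gives $\dim A = 1$, so $A = E_a$ is an elliptic curve $y^2 = x(x-1)(x-a)$ with $a \notin \mathbb{C}$. By uniform definability of the Legendre family $\{E_b^{\sharp}\}$ (Lemma 2.25), the nonorthogonality of $Y(\mathbf{a})$ to some $E_b^{\sharp}$ is recorded by an $L$-formula $\theta(a_1,a_2,a_3,a_4,t)$, the extra parameter $t$ coming from the equation itself.

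Next I would freeze $a_1,a_2,a_3,t$ and vary the last coordinate. Because $a_4$ is algebraically independent from $a_1,a_2,a_3,t$ and $\mathbb{C}$ is strongly minimal, the instance $\theta(a_1,a_2,a_3,w,t)$ holds for all but finitely many $w \in \mathbb{C}$ (exactly as in Claim III of Proposition 3.6). Taking $w = n - a_3$ for $n \in \mathbb{Z}$ yields infinitely many values with $a_3 + w \in \mathbb{Z}$, hence with $(a_1,a_2,a_3,w) \in \mathcal{M}$; since only finitely many $w$ fail $\theta$, some such $w$ both lies in $\mathcal{M}$ and satisfies $\theta$. For that parameter $\mathbf{a}'$, Corollary 3.17 says $Y(\mathbf{a}')$ is not strongly minimal, so by Corollary 2.18(b) it contains a definable subset of order $1$; but $\theta(\mathbf{a}',t)$ asserts a finite-to-finite correspondence between $Y(\mathbf{a}')$ and the order-$2$ strongly minimal set $E_b^{\sharp}$, which is impossible by the transcendence-degree argument at the end of the proof of Proposition 3.6. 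This contradiction yields geometric triviality.

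The conceptual content is entirely borrowed from Proposition 3.6; the only genuinely new point to verify is bookkeeping, namely that $\theta$ can be chosen uniform in all four $a_i$ at once (so that freezing three and varying one is legitimate) and that the chosen translation family $w = n - a_3$ meets $\mathcal{M}$ infinitely often. The latter is guaranteed by the symmetric form of $\mathcal{M}$: since it is cut out by the conditions $\pm a_i \pm a_j \in \mathbb{Z}$, the single coordinate $a_4$ already participates in such a relation (with $a_3$), so no cleverness in the choice of which parameter to move is required. I expect this bookkeeping, rather than any model-theoretic difficulty, to be the main thing to get right.
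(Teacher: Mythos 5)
Your proposal is correct and follows essentially the same route as the paper's proof: reduce to the system $S_{VI}(\mathbf{a})$, get strong minimality from Corollary 3.17 via the interalgebraicity of $(\alpha,\beta,\gamma,\delta)$ with $\mathbf{a}$, and, on failure of geometric triviality, use uniform definability of the family $E_{b}^{\sharp}$ together with independence of the $a_{i}$ from $t$ to move one coordinate into $\mathcal{M}$ and derive the order-count contradiction from Proposition 3.6. The only (immaterial) difference is that the paper varies $a_{1}$ and chooses $c\in a_{2}+\mathbb{Z}$, whereas you vary $a_{4}$ and choose $w\in -a_{3}+\mathbb{Z}$; your extra bookkeeping on the parameter change is detail the paper leaves implicit.
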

\begin{proof} It is enough to work with the solution set of the system $S_{VI}({\bf a})$ and note that $a_{1}, a_{2}, a_{3}, a_{4}$ are algebraically 
independent, so by Corollary its solution set $Y(a_{1},a_{2},a_{3},a_{4})$ is strongly minimal. If it is not geometrically trivial, then by indepence 
of $a_{1}, a_{2}, a_{3},a_{4}, t$ in $DCF_{0}$, for all but finitely many $c\in {\mathbb C}$, $Y(c,a_{2},a_{3},a_{4})$ is in finite-to finite 
correspondence with some $E_{b}^{\sharp}$ with $b\notin {\mathbb C}$. Choosing such $c\in a_{2}+ {\mathbb Z}$ gives a contradiction to Corollary 3.17

\end{proof}

\subsection{Further Remarks}  First note that the methods above show that for the families $P_{III}$, $P_{V}$ and  $P_{VI}$, the solution set of an equation is (strongly minimal and) geometrically trivial as long as  the transcendence degree of the tuple $\alpha, \beta, \gamma, \delta$ is at least $2$. 

We would also guess that in fact the solution set of any of the equations is geometrically trivial whenever it is strongly minimal, but this is not given by our methods.

Note that the model-theoretic content of our methods is:

\begin{prop} Let $y'' = f(y,y', b_{1},..,b_{n})$ where $f(-,-,-..)$ is a rational function (over $\mathbb Q$) and $b_{1},..,b_{n}\in {\mathcal U}$. 
Suppose that the solution set of the equation is strongly minimal and {\em not} geometrically trivial. Then there is a formula $\theta(y_{1},..,y_{n})$(without parameters)  true of $b_{1},..,b_{n}$ such that for any $c_{1},..,c_{n}$ satisfying $\theta$ the solution set of $y'' = f(y,y',c_{1},..,c_{n})$ is strongly minimal (and nontrivial).
\end{prop}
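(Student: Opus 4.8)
The plan is to abstract the common structure of the six proofs (Propositions 3.6, 3.9, 3.12, 3.15, 3.18) into the single model-theoretic statement of Proposition 3.20. The essential point in each of those proofs was that failure of geometric triviality of a strongly minimal solution set, via the trichotomy theorem (Theorem 2.8) together with the order computation ruling out nonorthogonality to $\mathbb{C}$ and forcing the abelian variety to be an elliptic curve, is witnessed by a \emph{formula} asserting nonorthogonality to some $E_{b}^{\sharp}$, and that this formula is expressible uniformly in the parameters precisely because of the uniform definability of the functor $A \mapsto A^{\sharp}$ (Lemma 2.25). So the first step is to run the beginning of the argument of Proposition 3.6 in this parametrised setting: assuming the solution set $X$ of $y'' = f(y,y',b_1,\dots,b_n)$ is strongly minimal and not geometrically trivial, invoke Theorem 2.8 to get nonorthogonality to $A^{\sharp}$ for a simple abelian variety $A$ with $\mathbb{C}$-trace $0$, and use Remark 2.7 together with Fact 2.5(ii) (as in Claim I of 3.6) to conclude that $A$ is an elliptic curve $E_a$ with $a \notin \mathbb{C}$.

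Next I would construct the desired formula $\theta$. By Lemma 2.25 applied to the Legendre family $\{E_b : y^2 = x(x-1)(x-b),\ b \neq 0,1\}$, there is an $L$-formula $\psi(x,y,z)$ such that $\psi(x,y,b)$ defines $E_b^{\sharp}$. The nonorthogonality of $X$ to $E_a^{\sharp}$ is witnessed by a definable $Z \subseteq X \times E_a^{\sharp}$ which projects finite-to-finite (with fibres bounded by some fixed $k$) onto each factor; write $Z$ as $\chi(-,-,c)$. As in Claim II of the proof of Proposition 3.6, the uniform definability of $E_b^{\sharp}$ lets me write down an $L$-formula $\rho(w_1,\dots,w_n,u,v)$ (with the extra parameter $t$) expressing that for parameters $(c_1,\dots,c_n)$, the set $\chi(-,-,c_1,\dots,c_n)$ defines a finite-to-finite correspondence of the required bounded type between the solution set of $y'' = f(y,y',w_1,\dots,w_n)$ and $E_v^{\sharp}$ for some $v \notin \mathbb{C}$. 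Setting $\theta(w_1,\dots,w_n)$ to be $\exists u\, \exists v\, \rho(w_1,\dots,w_n,u,v)$ gives a formula (over the parameter $t$ only; I will then absorb or suppress $t$ exactly as the existing proofs do, since the conclusion is a statement about the existence of \emph{some} formula true of the $b_i$). By construction $\mathcal{U} \models \theta(b_1,\dots,b_n)$.

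It remains to check the two assertions in the conclusion of Proposition 3.20 for $c_1,\dots,c_n$ satisfying $\theta$: that the solution set is strongly minimal and that it is nontrivial. Nontriviality is immediate from $\theta$, since $\theta$ directly asserts a finite-to-finite correspondence with an $E_v^{\sharp}$, $v \notin \mathbb{C}$, which has order $2$, and such a correspondence with a strongly minimal order-$2$ set is precisely a witness to non-geometric-triviality (and, via Remark 2.7, forces the order to be $2$ and strong minimality to be inherited through nonorthogonality, using Remark 2.22 and the invariance of geometric triviality under nonorthogonality in Remark 2.7). The honest subtlety here, and what I expect to be the main obstacle, is the precise quantifier content: $\theta$ as written guarantees the \emph{existence} of a definable correspondence, but to conclude strong minimality of the solution set of $y'' = f(y,y',c_1,\dots,c_n)$ from $\theta(c_1,\dots,c_n)$ I must ensure that the formula also encodes, or is conjoined with, the clause that $\chi(-,-,\bar c)$ genuinely projects onto an infinite (hence cofinite) subset with the fibre bound $k$, so that the correspondence actually transports strong minimality from $E_v^{\sharp}$ back to the solution set. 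I would therefore build these projection-and-fibre-bound conditions into $\rho$ explicitly (they are first-order expressible, uniformly, given $\psi$), so that $\theta(\bar c)$ forces the solution set to be infinite, have no infinite-coinfinite definable subsets (by pulling back subsets through the finite-to-finite correspondence with the strongly minimal $E_v^{\sharp}$), and be nontrivial simultaneously. The order computation and the exclusion of nonorthogonality to $\mathbb{C}$, which in the concrete proofs used $\mathrm{ord} = 2$, carry over verbatim here because $E_v^{\sharp}$ has order $2$ and nonorthogonal strongly minimal sets share the same order by Remark 2.7.
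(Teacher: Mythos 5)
Your overall strategy is exactly the one the paper intends: Proposition 3.20 is stated without an explicit proof precisely because it abstracts the arguments of Propositions 3.6--3.18, and your construction of $\theta$ via Lemma 2.25, the reduction to an elliptic curve as in Claim I of Proposition 3.6, and the formula $\rho$ recording a surjective finite-to-finite correspondence with a fixed fibre bound $k$ all match Claims I and II there. You also correctly locate where the work lies, namely in showing that $\theta(c_{1},\dots,c_{n})$ really forces strong minimality of the new solution set.

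However, the justification you give for that last step would fail. A finite-to-finite correspondence onto a strongly minimal set does \emph{not} transfer strong minimality ``by pulling back subsets'': take $X$ to be two disjoint copies of $E^{\sharp}$ with the obvious $2$-to-$1$ correspondence onto $E^{\sharp}$; it projects onto both factors with all fibres of size at most $2$, yet $X$ is not strongly minimal, and pulling back an infinite co-infinite subset of $X$ merely produces two cofinite images in $E^{\sharp}$, which is no contradiction. Likewise ``strong minimality inherited through nonorthogonality'' is circular, since nonorthogonality in Definition 2.6(ii) is only defined between sets already known to be strongly minimal (and Remark 2.22, about $\omega$-categoricity, is irrelevant here). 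What rescues the argument is the special form of $X_{\bar c}$: it is the solution set of a single second-order ODE in one variable, hence in the coordinates $(y,y')$ it is $(U,s)^{\partial}$ for an irreducible open $U\subseteq {\mathbb A}^{2}$, and by Corollary 2.18(b) (i.e.\ Fact 2.16 plus irreducibility of $U$) such a set fails to be strongly minimal if and only if it contains an order-$1$ definable subset. Your correspondence excludes order-$1$ subsets exactly as in the final paragraph of the proof of Proposition 3.6: a generic point $z$ of such a subset satisfies $tr.deg(K\langle z\rangle /K)=1$ and is interalgebraic over $K$ with a point of $E_{v}^{\sharp}$, whose transcendence degree over $K$ can only be $0$ or $2$ since $E_{v}^{\sharp}$ is strongly minimal of order $2$ --- a contradiction either way. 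With that substitution your proof closes; the nontriviality clause is fine as you state it, since $E_{v}^{\sharp}$ is a strongly minimal group, hence not geometrically trivial, and triviality is invariant under nonorthogonality by Remark 2.7.
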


Likewise if $X({\bar b})$ is a strongly minimal set of order $2$ which is defined over ${\bar b}$ and is not geometrically trivial, then there will be a formula $\theta({\bar y})$ true of ${\bar b}$ such that for any ${\bar c}$ satisfying $\theta$, $X_{\bar c}$ has Morley rank $1$. \\

The next remark concerns ``downward semi-definability of Morley rank", as discussed in \cite{Hrushovski-Scanlon}. The point is that each of the 
Painlev\'e families witnesses non downward semi-definability of Morley rank. For example for $P_{II}(\alpha)$ the analysis of \cite{Umemura2} that we cite also gives that for $\alpha\in 1/2 + {\mathbb Z}$, the solution set of $P_{II}(\alpha)$ contains infinitely many (in fact a definable family of) order $1$ definable sets, hence has Morley rank $2$. So:
\begin{fct}
For $\alpha\in {\mathbb C}$ transcendental, the solution set $Y(\alpha)$ of $P_{II}(\alpha)$ has Morley rank $1$, but for every formula $\theta(x)$ 
true of $\alpha$ there is $\alpha_{1}$ satisfying $\theta(x)$ such that $Y_{\alpha_{1}}$ has Morley rank $>1$, 
\end{fct}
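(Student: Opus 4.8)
The plan is to prove Fact 3.20 by combining the strong-minimality/geometric-triviality result for the generic equation (Proposition 3.6) with the failure of strong minimality at the special parameters $\alpha \in 1/2 + \mathbb{Z}$ coming from Corollary 3.5. The two halves of the statement are almost independent: the first half is immediate, and the second half is where the real content lies.

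First I would dispose of the claim that $Y(\alpha)$ has Morley rank $1$ for transcendental $\alpha$. By Proposition 3.6 the solution set $Y(\alpha)$ is strongly minimal; a strongly minimal set has Morley rank $1$ by definition (it is infinite with no infinite co-infinite definable subset), so $RM(Y(\alpha)) = 1$. This requires no calculation.

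For the second half, the key input is that for $\alpha_1 \in 1/2 + \mathbb{Z}$ the solution set is genuinely ``fatter'' than rank $1$: by Corollary 3.5 (together with the cited analysis of \cite{Umemura2}) the solution set contains a \emph{definable family} of order $1$ definable subsets, not merely a single one. Having a definable family of distinct rank-$1$ definable sets inside $Y(\alpha_1)$ forces $RM(Y(\alpha_1)) > 1$, since the parameter space of the family contributes extra Morley rank on top of the generic fibre. So the plan is: take an arbitrary formula $\theta(x)$ over $\emptyset$ (or over $t$) true of $\alpha$; I must exhibit some $\alpha_1$ satisfying $\theta$ with $RM(Y_{\alpha_1}) > 1$. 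The mechanism is exactly the one used repeatedly in Section 3: $\theta(x)$ defines a subset of the strongly minimal constant field $\mathbb{C}$, so it is either finite or cofinite; since $\alpha$ is transcendental over $\mathbb{Q}(t)$ and hence $\alpha \notin acl(t)$, the set defined by $\theta$ must be cofinite in $\mathbb{C}$. A cofinite subset of $\mathbb{C}$ necessarily meets the coset $1/2 + \mathbb{Z}$, so I can pick $\alpha_1 \in (1/2 + \mathbb{Z})$ with $\models \theta(\alpha_1)$, and for this $\alpha_1$ the Umemura analysis gives $RM(Y_{\alpha_1}) > 1$.

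The main obstacle, and the only step requiring care, is justifying that the presence of a definable family of order $1$ subsets genuinely pushes Morley rank strictly above $1$, rather than just producing a strongly minimal set written in a redundant way. Here one uses that $Y(\alpha_1)$ has \emph{order} $2$ (the equation is second order) so its generic points have transcendence degree up to $2$ over the base, while the order $1$ definable subsets have generic points of transcendence degree $1$; an infinite (definable) family of such subsets cannot all be contained in a rank $1$ set, so by additivity of Morley rank over the definable family $Y_{\alpha_1}$ must have Morley rank $2$. This is essentially the observation, already recorded in the text preceding the statement, that $Y_{\alpha_1}$ ``contains infinitely many (in fact a definable family of) order $1$ definable sets, hence has Morley rank $2$''; I would simply make that inference explicit. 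The upshot is that $Y(\alpha)$ witnesses the failure of downward semi-definability of Morley rank in the sense of \cite{Hrushovski-Scanlon}, as claimed.
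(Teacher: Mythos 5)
Your proposal is correct and follows essentially the same route as the paper's (implicit) argument: strong minimality from Proposition 3.6 gives Morley rank $1$ for transcendental $\alpha$; the genericity-in-the-constants mechanism of Claim III in the proof of Proposition 3.6 produces $\alpha_{1}\in \frac{1}{2}+\mathbb{Z}$ satisfying $\theta$; and the cited Umemura--Watanabe analysis then yields a definable family of order $1$ subsets of $Y(\alpha_{1})$, whence Morley rank $\geq 2$. One small caveat: that last inference should be justified directly from the definition of Morley rank (infinitely many pairwise \emph{almost disjoint} infinite definable subsets --- which these are, since distinct irreducible order $1$ Kolchin closed subsets of an order $2$ set meet in finite sets --- force rank $\geq 2$), not by ``additivity'' of Morley rank, which notoriously fails in $DCF_{0}$; indeed that failure is precisely the point of the Hrushovski--Scanlon connection this remark is drawing.
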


This fact, together with the proof of Lemma 1.1 in \cite{Hrushovski-Scanlon}, shows that there is a $\emptyset$-definable set $X$ of order $3$ in $DCF_{0}$ with  Morley rank different from Lascar rank. This answers (in a negative sense) Question 2.9 of \cite{Hrushovski-Scanlon}. 

\vspace{2mm}
\noindent 
Finally one might wonder whether our soft proofs of geometric triviality of certain order $2$ equations might apply to higher order equations. But an obstacle is precisely non downward semi-definability of Morley rank for more general families $A^{\sharp}$, as given in \cite{Hrushovski-Scanlon}: There is a definable (in $DCF_{0}$) family  $(A_{b}:b\in B)$ (with $B$ of finite Morley rank) family of $2$-dimensional abelian varieties such for generic $b\in B$, $A_{b}$ is simple (hence $A_{b}^{\sharp}$ is strongly minimal), but for every $\emptyset$-definable subset $B'\subset B$ there is $b'\in B'$ such that $A_{b'}$ is not simple (hence $A_{b'}^{\sharp}$ has Morley rank $2$).

\appendix

\section{Irreducibility}

The expression ``irreducibility" here refers to irreducibility of a function to ``known" functions. The context is the attempt to discover the 
second order ODE's with the Painlev\'e property which ``define", via their solutions, really new functions. We did not read the original article but 
we understand that in \cite{Pain}, Painlev\'e initiated a definition of ``known function". This was taken up again the 1980's by Umemura and 
others who tried to give clear definitions of a ``known" (or ``classical") function and an  ``irreducible" ODE, and thus open up the possibility of a rigorous proof that the list $P_{I}-P_{VI}$ really define new special functions. The results of the Japanese school which we use and refer to in the body of this paper were really aimed at proving ``irreducibility". This also figures in work by Casale for example \cite{Casale} which tries to prove irreducibility via Malgrange's Galois groupoid.\\

Here we aim to describe this theory of irreducibility and give, without detailed proofs, some model-theoretic account (although an equivalent differential algebraic account in terms of strongly normal extensions is well-known)
There are various levels of abstraction at which one can present these ideas of ``classical function" and ``irreducibility", but we will roughly follow the presentation in \cite{Umemura1}. \\

Let $D$ be a domain in $\mathbb C$ and  ${\mathcal F}(D)$ denote the {\em differential} field of meromorphic functions on $D$, equipped with the 
derivation $d/dt$. Note that ${\mathbb C}(t)$ is a subdifferential field. We will consider only functions in ${\mathcal F}(D)$ where $D$ may vary. We 
also may identify a function $f\in  {\mathcal F}(D)$ with its restriction to a smaller domain $D'$. We need also the notion of the logarithmic derivative $\partial ln_{G}$ corresponding to a connected complex algebraic group $G$. Let $TG$ be the tangent bundle of $G$, also a connected complex algebraic group, a semidirect product of $G$ with $LG = {\mathbb C}^{n}$. Let $F:D \to G$ be holomorphic. Then for $t\in G$, $F'(t) = dF/dt$ can be identified with a point in $TG$ in the fibre of $F(t)$ and then $F'(t)F(t)^{-1}$ (multiplication in the sense of $TG$) lies in $LG$, and we define $(\partial ln_{G}(F))$ to be the holomorphic function from $D$ to $LG$ whose value at $t$ is precisely $F'(t)F(t)^{-1}$. Likewise if $F:D\to G$ is meromorphic $\partial ln_{G}(F): D \to LG$ is meromorphic.\\

This logarithmic derivative $\partial ln_{G}$ in fact  makes sense in any differential field, in particular in our universal differentially closed field ${\mathcal U}$, as a differential rational crossed homomorphism from $G({\mathcal U})$ to $LG({\mathcal U})$:  for $g\in G({\mathcal U})$, $\partial ln_{G}(g) = \partial g \cdot g^{-1}$ where $\cdot$ is multiplication in $TG({\mathcal U})$. \\

We will denote our base differential field ${\mathbb C}(t)$ by $K$.

\begin{defn} (a) Following Umemura \cite{Umemura1} w give an inductive definition of a {\em classical function}.
\newline
(i) any $f\in K$ is classical.
\newline
(ii) Suppose that $f_{1},..,f_{n}\in {\mathcal F}(D)$ are classical, and $f\in {\mathcal F}(D')$ (for some appropriate $D'\subseteq D$) is in the algebraic closure of the differential field generated by ${\mathcal C}(t)(f_{1},..,f_{n})$, then $f$ is classical.
\newline
(iii) Let $G$ be a connected complex algebraic group with $LG = {\mathbb C}^{n}$. Suppose $f_{1},..,f_{n}\in {\mathcal F}(D)$ are classical, and for some $D'\subseteq D$, $f: D'\to G$ is a meromorphic function such that $\partial ln_{G}(f) = (f_{1},..,f_{n})$. Then the coordinate functions of $f$ are classical. 
\newline
(b) An equation $f(y,y',...y^{(n)}) = 0$ (with coefficients from $K$ is then said to be {\em irreducible} (with respect to classical functions) if no solution is classical (in particular there are no algebraic solutions).
\end{defn}

\begin{rmk}  If $F$ is any differential field (with algebraically closed field of constants ${\mathcal C}_{F}$), $G$ a connected algebraic group over ${\mathcal C}_{F}$, $a\in LG(F)$, and $g\in G(L)$ ($L>F$) is such that $\partial ln_{G}(g) = a$, and moreover ${\mathcal C}_{F} = {\mathcal C}_{F\langle g\rangle}$ then $F\langle g\rangle$ is a {\em strongly normal extension} of $F$ (and for $F$ algebraically closed any strongly normal extension of $F$ has this form for some $G$, $a$, $g$). In particular as is well-known $F\langle g\rangle$ is contained in some differential closure $F^{diff}$ of $F$, hence $tp(g/F)$ is isolated (or constrained in the sense of Kolchin).
\end{rmk}

As mentioned earlier any differential subfield of any ${\mathcal F}(D)$ which is countably generated over ${\mathbb C}(t)$ can be embedded (over ${\mathbb C}(t)$) in our universal differential field ${\mathcal U}$, hence any $f\in {\mathcal F}(D)$ can be assumed to be an element of 
${\mathcal U}$. 

\begin{prop} A function $f\in {\mathcal F}(D)$ is classical if and only if in the structure ${\mathcal U}$, $tp(f/{\mathbb C}(t))$ is isolated and analysable in the constants ${\mathcal C}_{\mathcal U} = {\mathbb C}$. 
\end{prop}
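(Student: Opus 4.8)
The plan is to prove both implications by induction, translating between the inductive build-up of classical functions in Definition A.1 and a \emph{tower} description of ``isolated and analysable in the constants''. The crucial bridge is the differential Galois theory of strongly normal extensions already flagged in Remark A.2: adjoining a solution $g$ of a logarithmic-derivative equation $\partial ln_{G}(g) = a$ (with $a\in LG(F)$) produces, provided no new constants are introduced, a strongly normal extension $F\langle g\rangle$ of $F$, and conversely every strongly normal extension of an algebraically closed $F$ arises this way. The first thing I would record is the key local fact: for fixed $a\in LG(F)$ the solution set of $\partial ln_{G}(y)=a$ in ${\mathcal U}$ is a torsor under the constant-point group $G({\mathcal C}_{\mathcal U})=\ker(\partial ln_{G})$, since any two solutions differ (on the right) by a point of $G$ with vanishing logarithmic derivative, i.e. a constant point. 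Fixing one solution $g_{0}$ we get $g=g_{0}\cdot c$ with $c\in G({\mathbb C})$, so $g\in dcl(F\langle g_{0}\rangle, c)$; hence $tp(g/F)$ is internal to the constants. Moreover, since every function in sight lies in some ${\mathcal F}(D)$, whose field of constants is exactly ${\mathbb C}$, adjoining such $g$ introduces no new constants, so by Remark A.2 the extension is strongly normal and $tp(g/F)$ is isolated.

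For the \emph{forward} direction (classical $\Rightarrow$ isolated and analysable) I would induct on the construction in Definition A.1. The base case $f\in{\mathbb C}(t)$ is immediate. For the algebraic-closure step (ii), both isolation and analysability in ${\mathbb C}$ are preserved under passing to a tuple $f_{1},\ldots,f_{n}$ (concatenate the individual analyses, using that internality and analysability persist after adding parameters) and under adjoining an element algebraic over the differential field they generate. For the logarithmic-derivative step (iii), the local fact above shows $tp(g/{\mathbb C}(t)\langle f_{1},\ldots,f_{n}\rangle^{alg})$ is ${\mathbb C}$-internal and isolated with no new constants; appending this one ${\mathbb C}$-internal layer to the analysis of the (classical, hence by induction analysable) tuple $(f_{1},\ldots,f_{n})$, and invoking transitivity of isolation (the differential closure of ${\mathbb C}(t)$ absorbs the whole tower), gives the conclusion for the coordinate functions of $f$.

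For the \emph{backward} direction (isolated and analysable $\Rightarrow$ classical), which is where the real work lies, I would take an analysis $a_{0},\ldots,a_{k}$ of $f$ over ${\mathbb C}(t)$ in which each $tp(a_{i}/F_{i-1})$ is (almost) internal to ${\mathbb C}$, with $F_{i-1}={\mathbb C}(t)\langle a_{0},\ldots,a_{i-1}\rangle^{alg}$, and show each layer is classical over the previous one. Here isolation is essential: because $f$ (hence each $a_{i}$) lies in the differential closure of ${\mathbb C}(t)$ realised inside ${\mathcal F}(D)$, no layer adds constants, so each ${\mathbb C}$-internal isolated extension $F_{i-1}\langle a_{i}\rangle$ with $\mathcal{C}_{F_{i-1}\langle a_{i}\rangle}={\mathbb C}$ sits inside a strongly normal extension of $F_{i-1}$. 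By the model-theoretic differential Galois theory of strongly normal extensions, such an extension is generated by a solution of a logarithmic-derivative equation $\partial ln_{G}(y)=b$ over $F_{i-1}$ for a suitable connected algebraic group $G$ over ${\mathbb C}$, with $b$ classical by the inductive hypothesis; thus $a_{i}$, up to an algebraic extension absorbing the ``almost'' and matching Definition A.1(ii), is classical, and finitely many steps make $f$ classical.

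The main obstacle is precisely this backward identification of each ${\mathbb C}$-internal, isolated, constant-preserving layer with a strongly normal extension, and hence with a logarithmic-derivative equation: this is the substantive input from differential Galois theory, including checking that the binding group is (definably isomorphic to) an algebraic group over the constants and that ``almost internal'' can be reduced to ``internal'' at the cost of the algebraic-closure operation already permitted by Definition A.1(ii). The remaining ingredients---internality of the log-derivative torsors, and the stability-theoretic bookkeeping of isolation and analysability under tuples, parameters, and algebraic extensions---I expect to be routine.
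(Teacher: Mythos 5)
Your proposal is correct and follows exactly the route the paper intends: the paper's own ``proof'' is the single sentence that Proposition A.3 follows easily from Remarks A.2 and A.4, and your argument is precisely the fleshed-out version of that, matching the inductive clauses of Definition A.1 with the layers of an analysis via the correspondence between isolated ${\mathbb C}$-internal extensions, strongly normal extensions, and logarithmic-derivative equations.
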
 
 
We recall the notion of analyzability and mention some ingredients for the (easy) proof of Proposition A.3.\\

 Analysability of $tp(f/K)$ in the constants means that (possibly passing to a larger universal differentially closed field ${\mathcal U}_{1}$), there are  $a_{0},a_{1},..,a_{n}$ such that $f\in K\langle a_{0},a_{1},..,a_{n}\rangle^{alg}$ (i.e. $f\in acl(K,a_{0},..,a_{n})$) and for each $i$, either $a_{i}\in K\langle  
a_{0},..,a_{i-1}\rangle ^{alg}$ or $tp(a_{i}/K\langle a_{0},..,a_{i-1}\rangle$ is {\em stationary} and internal to the constants. Moreover if such $a_{0},..,a_{n}$ exist then we may choose them (maybe enlarging the sequence) to be in $dcl(K,f)$ (namely in $K\langle f\rangle$). 
Stationarity of $tp(a/F)$ means that the differential algebraic variety over $F$ of which $a$ is the generic point is {\em absolutely irreducible}. Internality of $tp(a/F)$ to the constants means that  there is a differential field $L>F$ such that $a$ is independent from $L$ over $F$ and $a\in L\langle c_{1},..,c_{k}\rangle$ for some {\em constants} $c_{1},..,c_{k}$.
Analysability of $tp(a/K)$ in the constants is equivalent to every extension of this type being nonorthogonal to the constants.\\

\begin{rmk} (e.g. Section 18.3 of \cite{Poizat}) Let $F$ with a differential field with algebraically closed field of constants, and  $a$ a tuple from ${\mathcal U}$, then $tp(a/F)$ is isolated and internal to the constants if and only if $a$ is contained in some strongly normal extension $L$ of $F$. 
\end{rmk}

Putting together Remarks A.2 and A.4 the proof of Proposition A.3 is easy. \\

As far as irreducibility is concerned:
\begin{rmk} Let $f(y,y',..,y^{(n)}) = 0$ (with $n\geq 2$) be an ODE over $K = {\mathbb C}(t)$, which has no algebraic (over $K$) solutions and whose solution set in ${\mathcal U}$ is strongly minimal. Then $f(y,y',..,y^{(n)}) = 0$ is irreducible with respect to classical functions. 
\end{rmk}
\begin{proof} As the order is $\geq 2$, the solution set of the equation is orthogonal to the constants, hence together with no algebraic solutions we obtain irreducibility. 
\end{proof}

In  \cite{Umemura1}, Umemura considers an extension the inductive definition A.1.  We again consider functions in ${\mathcal F}(D)$ for varying $D$.
\begin{defn} (a) We give an inductive definition of a $1$-classical function:
\newline 
Clauses (i)- (iii) are exactly as in Definition A.1 with $1$-classical in place of classical.
\newline
(iv) Suppose $f_{1},..,f_{n} \in {\mathcal F}(D)$ are $1$-classical, and $f \in {\mathcal F}(D)$ is a solution of an $ODE$ $f(y,y')= 0$ where $f$ has coefficients from $K(f_{1},..,f_{n})$. Then $f$ is $1$-classical.
\newline
(b) An $ODE$ over $K$ is said to be irreducible with respect to $1$-classical functions if no solution of it is $1$-classical. 
\end{defn}

We will state without proof analogues of Proposition A.3 and Remark A.5  The first depends on Fact 2.24 among other things.

\begin{prop} A function $f\in {\mathcal F}(D)$ is $1$-classical if and only if $tp(f/K)$ is isolated and analysable in the constants together with the family of all order $1$ equations $f(y,y') = 0$ (with coefficients from ${\mathcal U}$).
\end{prop}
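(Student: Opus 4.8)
The plan is to imitate the proof of Proposition A.3 essentially verbatim, replacing the single family used there (the constants) by the larger collection consisting of the constants together with the solution sets of all order $1$ equations, and to supply the one genuinely new ingredient, namely Fact 2.24. Recall that A.3 rests on two points: clause (iii) of Definition A.1 is exactly the passage to a strongly normal extension, and (Remarks A.2, A.4) a type is isolated and internal to the constants precisely when it is realised inside such an extension, which in turn sits inside a differential closure of the base. Since the only feature of Definition A.7 beyond A.1 is clause (iv), the solving of an order $1$ equation, the whole task reduces to understanding the model-theoretic content of clause (iv), and this is where Fact 2.24 enters.

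For the implication from $1$-classical to ``isolated and analysable'' I would induct on the construction of $f$. Clauses (i)--(iii) are treated exactly as in A.3 and contribute steps that are algebraic or stationary and internal to the constants. For clause (iv), where $f$ solves an order $1$ equation with coefficients in $K(f_{1},\dots,f_{n})$ and the $f_{i}$ are already $1$-classical, set $E = K\langle f_{1},\dots,f_{n}\rangle$; then $f$ is by construction a point of the order $1$ Kolchin closed set $W$ defined by that equation over $E$, so (after replacing $W$ by the component through $f$, for stationarity) $tp(f/E)$ is a stationary step internal to a member of our enlarged family. By the induction hypothesis $tp(f_{1},\dots,f_{n}/K)$ is isolated, and since $DCF_{0}$ is $\omega$-stable isolation is transitive, so it is enough to check that $tp(f/E)$ is isolated. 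This is precisely what Fact 2.24 delivers: decomposing $W$ over $E^{alg}$ into strongly minimal components, each component orthogonal to the constants is $\omega$-categorical and therefore carries an isolated generic type, while on a component non-orthogonal to the constants one falls back on the strongly normal picture of A.2, using crucially that, $f$ being a genuine meromorphic function, the extension $E\langle f\rangle$ adds no new constants.

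For the converse I would, exactly as in A.3, peel off an analysis $a_{0},\dots,a_{m}\in K\langle f\rangle$ of the isolated type $tp(f/K)$, in which each $tp(a_{i}/K\langle a_{0},\dots,a_{i-1}\rangle)$ is stationary and internal either to the constants or to the solution set of some order $1$ equation; note that isolation of $tp(f/K)$ is inherited by each of these steps once parameters are added. A step internal to the constants is, by Remark A.4, realised in a strongly normal extension of $K\langle a_{0},\dots,a_{i-1}\rangle$ and so is produced by clauses (i)--(iii) as before. A step internal to an order $1$ set must instead be produced by clause (iv): I would use stationarity and isolation to descend the relevant order $1$ equation so that its coefficients lie in the field generated by the $1$-classical functions constructed so far, whereupon the new element solves an order $1$ ODE over that field and is $1$-classical.

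The hard part will be exactly this descent in the converse. ``Internal to an order $1$ equation'' is an assertion involving an auxiliary field $L$ independent from $a_{i}$ over the base and points of an order $1$ set that is, a priori, defined with parameters lying outside the $1$-classical tower, whereas clause (iv) demands an honest order $1$ ODE whose coefficients already belong to $K(f_{1},\dots,f_{n})$ with the $f_{j}$ themselves $1$-classical. Reconciling the two, absorbing the generators of $L$ into the tower and moving the defining equation down to the correct field, all while keeping every stage isolated so that Kolchin constrainedness (membership in the differential closure) is preserved, is the delicate bookkeeping. A secondary point to watch is that order $1$ sets are in general not strongly minimal, so they must first be broken into components before Fact 2.24 can be applied.
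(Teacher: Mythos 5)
A preliminary but important point: the paper does not prove Proposition A.8 at all. It is explicitly ``stated without proof,'' accompanied only by the remark that it ``depends on Fact 2.24 among other things.'' So there is no argument in the paper against which to measure yours; all that can be said is that your plan is consistent with that one-line hint. You correctly identify Fact 2.24 ($\omega$-categoricity, hence isolation of the generic type over any finite parameter set, for order-$1$ strongly minimal sets orthogonal to the constants) as the genuinely new ingredient beyond the machinery of A.2--A.4, you correctly reduce everything to clause (iv) of Definition A.7, and your remark that an order-$1$ Kolchin closed set must first be decomposed into strongly minimal components is right (and harmless, since irreducible order-$1$ Kolchin closed sets in one variable are strongly minimal, their proper Kolchin closed subsets having order $0$ and hence being finite).

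Judged as a proof, however, what you have written is an outline with two unresolved steps, both of which you yourself flag. (1) In the forward direction, when the component $W_{0}$ through $f$ is nonorthogonal to the constants, Fact 2.24 gives you nothing, and ``falling back on the strongly normal picture of A.2'' is not automatic: A.2 concerns logarithmic derivatives on algebraic groups, and an arbitrary order-$1$ set nonorthogonal to $\mathbb{C}$ is not literally of that form. What is actually needed is the chain: nonorthogonality of a minimal type to $\mathbb{C}$ gives almost-internality to the constants over extra parameters; almost-internality together with your (correct and essential) observation that $E\langle f\rangle$ adds no new constants places $f$ in a strongly normal, or at least constrained, extension of $E$, whence isolation via Remark A.4. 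Each link is standard, but none is supplied, and the extra parameters witnessing nonorthogonality must be disposed of to get isolation over $E$ itself. (2) In the converse direction (from isolated-and-analysable to $1$-classical), the descent you call ``the hard part'' is precisely where all the content lies: a semi-minimal analysis produces steps internal to order-$1$ sets defined over auxiliary parameters independent from the tower, and the element $a_{i}$ may only lie in the definable closure of several order-$1$ points over those parameters, whereas clause (iv) adjoins a single solution of a single order-$1$ equation whose coefficients already lie in the field generated by previously constructed $1$-classical functions. You describe the mismatch accurately but do not resolve it, so the proposal cannot be counted as a proof of A.8; it is, rather, a plausible road map for what is presumably the intended argument.
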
 

\begin{prop} An $ODE$  $y'' = f(y,y')$ over $K$ is irreducible with respect to $1$-classical functions if and only if it has no algebraic solutions and its solution set is strongly minimal.
\end{prop}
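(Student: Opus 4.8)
My plan is to read off both implications from the description of $1$-classical functions, using the inductive clauses of Definition A.6 in one direction and the characterization of Proposition A.7 in the other. Write $Y$ for the solution set of $y'' = f(y,y')$ in $\mathcal{U}$, presented as $(\mathbb{A}^2, D)^\partial$ under $x = y'$, and let $\Sigma$ denote the family consisting of the constants $\mathbb{C}$ together with all order $1$ sets $g(y,y') = 0$ over $\mathcal{U}$; by Proposition A.7 a solution is $1$-classical exactly when its type over $K$ is isolated and analysable in $\Sigma$. I record at the outset that $order(Y) = 2$: a generic solution arises by choosing $y$ and $y'$ algebraically independent over $K$, so $Y$ carries a unique type over $K$ of transcendence degree $2$.

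For the direction assuming no algebraic solutions and strong minimality of $Y$, I would argue by orthogonality. Every member of $\Sigma$ has order at most $1$, whereas $Y$ is strongly minimal of order $2$; by Remark 2.7 a strongly minimal set is nonorthogonal only to sets of equal order, so $Y$ is orthogonal to every strongly minimal member of $\Sigma$, and hence to all of $\Sigma$ (nonorthogonality of the strongly minimal $Y$ to any order $1$ set would pass to a minimal component of order $\leq 1$). A non-algebraic solution is generic in $Y$ by strong minimality, so its type $p$ is minimal; a minimal non-algebraic type that is analysable in $\Sigma$ must be nonorthogonal to some member of $\Sigma$, which we have just excluded. Hence no non-algebraic solution is $1$-classical, and by hypothesis there are no algebraic ones, so the equation is irreducible with respect to $1$-classical functions.

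For the converse I would prove the contrapositive. If there is a solution in $K^{alg}$ it is classical by Definition A.1(ii), hence $1$-classical, and we are done. So suppose $Y$ is not strongly minimal. By Corollary 2.18, $D$ then has a one-dimensional algebraic $\partial$-subvariety, that is an invariant plane curve, defined over some differential field $K' \geq K$. The key reduction I aim for is to produce such an invariant curve defined over $K^{alg}$: its generic solution satisfies an order $1$ equation $g(y,y') = 0$ with coefficients in $K^{alg}$, and since $K^{alg}$ consists of classical functions (Definition A.1(ii)), clause (iv) of Definition A.6 shows at once that this solution is $1$-classical. Working with the inductive clauses here, rather than with Proposition A.7, has the advantage of bypassing any separate check of isolation of the type.

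The descent of the invariant curve to $K^{alg}$ is the step I expect to be the main obstacle, and I would carry it out through the Jouanolou-theorem input that already underlies Fact 2.24. For each fixed degree $d$ the invariant curves $\{P = 0\}$ of $(\mathbb{A}^2, D)$, characterized by $D(P) = GP$ for some $G$, form a $K$-definable family of parameters. Taking $d$ to be the degree of the curve found above, there are two cases. If this family is finite, then being a finite $K$-definable set it is contained in $K^{alg}$, so in particular our curve is already defined over $K^{alg}$. If it is infinite, then by Jouanolou's theorem $D$ admits a rational first integral $R$, which one can take over $K^{alg}$; for any value $c_0 \in \mathbb{C} \subseteq K$ the level set $R = c_0$ is then a $D$-invariant curve over $K^{alg}$, returning us to the first case. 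In either case the desired order $1$ equation over $K^{alg}$, and with it a $1$-classical solution, is obtained. The one genuinely delicate point is the field of definition of $R$ in Jouanolou's theorem, which is where the real work of this direction lies.
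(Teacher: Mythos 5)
A preliminary remark: the paper states this proposition (and its companion, Proposition A.7) explicitly \emph{without proof} --- the only model it offers is the two-line argument for Remark A.5. Your left-to-right direction is correct and is precisely that argument extended from the constants to the whole family $\Sigma$ of constants and order-$1$ sets: a strongly minimal set of order $2$ is orthogonal to every definable set of order $\leq 1$ by Remark 2.7, so no nonalgebraic solution can have type over $K$ analysable in $\Sigma$, and with no algebraic solutions either, irreducibility follows via Proposition A.7. No complaints there.

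The problem is the converse, and it is larger than the ``delicate point'' you flag: the intermediate statement you aim at --- that failure of strong minimality produces an invariant curve over $K^{alg}$ --- is both stronger than what Definition A.6 requires and, I believe, false in general, so the strategy must change rather than be completed. Three concrete issues. (1) Over the nonconstant base $K={\mathbb C}(t)$ the relevant Jouanolou statement is Hrushovski's relative version (the input behind Fact 2.24); it yields a dominant $\partial$-rational map onto a one-dimensional $\partial$-variety, not necessarily a first integral valued in ${\mathbb C}$, so ``level sets $R=c_0$ with $c_0\in{\mathbb C}$'' is not available in general. (2) Even a genuine rational first integral need not descend: for $D=\partial+y\frac{\partial}{\partial y}+\lambda x\frac{\partial}{\partial x}$ with $\lambda$ irrational, the only invariant curves over ${\mathbb C}(t)^{alg}$ are the two axes, yet $y^{p}x^{q}e^{-(p+\lambda q)t}$ is a first integral over a transcendental extension; a nonconstant first integral over $K^{alg}$ would force infinitely many invariant curves over $K^{alg}$, a contradiction. (3) The ``return to the finite case'' move cannot be run on first integrals: the set of first integrals of bounded degree, once nonempty, is closed under postcomposition with M\"obius transformations over ${\mathbb C}$ and hence never finite. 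The repair is to notice that clause (iv) of Definition A.6 allows coefficients in $K(f_1,\dots,f_n)$ for \emph{any} previously obtained $1$-classical $f_i$, and that Proposition A.7 converts the whole question into showing that some solution has type over $K$ which is isolated and analysable in $\Sigma$. So one should take the (nonempty, $K$-definable) parameter set of invariant curves of bounded degree, realize a parameter $b$ in the differential closure of $K$ to secure isolation, and analyse a generic solution on the curve $Z_b$ in two $\Sigma$-internal steps (first $b$, or the fibre coordinate coming from the Jouanolou fibration, then the solution over it); what has to be proved is the $\Sigma$-analysability of that parameter, not its algebraicity over $K$, and that is where Fact 2.24 genuinely enters.
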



\end{document}